\documentclass[11pt,reqno]{amsart}

\usepackage[utf8]{inputenc}
\usepackage{amsfonts}
\usepackage{amsmath}
\usepackage{amssymb}
\usepackage{amsthm}
\usepackage{mathrsfs}
\usepackage{stmaryrd}
\usepackage{color}
\usepackage[english]{babel}
\usepackage{fontenc}
\usepackage{url}
\usepackage{graphicx}
\usepackage{hyperref}
\usepackage{caption}
\usepackage{epstopdf}
\usepackage{hyphenat}
\usepackage{float}
\usepackage{indentfirst}
\usepackage{url}
\usepackage[export]{adjustbox}
\usepackage{tikz}
\usetikzlibrary{matrix,arrows,decorations.pathmorphing}

\allowdisplaybreaks

\newtheorem{theorem}{Theorem}[section]
\newtheorem{lemma}[theorem]{Lemma}
\newtheorem{proposition}[theorem]{Proposition}
\newtheorem{corollary}[theorem]{Corollary}

\theoremstyle{definition}
\newtheorem{observation}[theorem]{Observation}
\newtheorem{remark}[theorem]{Remark}
\newtheorem{notation}[theorem]{Notation}
\newtheorem{example}[theorem]{Example}
\newtheorem{definition}[theorem]{Definition}

\DeclareMathOperator{\Sym}{Sym}
\DeclareMathOperator{\Supp}{Supp}
\DeclareMathOperator{\red}{red}
\DeclareMathOperator{\Bl}{Bl}
\DeclareMathOperator{\Spec}{Spec}
\DeclareMathOperator{\Pic}{Pic}
\DeclareMathOperator{\Aut}{Aut}
\DeclareMathOperator{\MP}{MP}
\DeclareMathOperator{\fr}{fr}
\DeclareMathOperator{\sm}{sm}
\DeclareMathOperator{\even}{even}
\DeclareMathOperator{\odd}{odd}
\DeclareMathOperator{\Gr}{Gr}
\DeclareMathOperator{\Proj}{Proj}

\DeclareMathOperator{\Hom}{Hom}
\DeclareMathOperator{\Cone}{Cone}

\oddsidemargin -0pt \evensidemargin -0pt \topmargin -20pt
\textheight 600pt \textwidth 470pt

\setcounter{tocdepth}{1}

\makeatletter
\def\l@subsection{\@tocline{2}{0pt}{2.5pc}{5pc}{}}
\makeatother

\author{Luca Schaffler}
\address{Department of Mathematics, KTH Royal Institute of Technology, SE-100 44 Stockholm, Sweden}
\email{lucsch@math.kth.se}

\subjclass{14J10, 14J28, 14D06}
\keywords{Moduli space, compactification, stable pair, Enriques surface}
\title{The KSBA compactification of the moduli space of $D_{1,6}$-polarized Enriques surfaces}

\begin{document}
\maketitle
\begin{abstract}
We describe a compactification by stable pairs (also known as KSBA compactification) of the $4$-dimensional family of Enriques surfaces which arise as the $\mathbb{Z}_2^2$-covers of the blow up of $\mathbb{P}^2$ at three general points branched along a configuration of three pairs of lines. Up to a finite group action, we show that this compactification is isomorphic to the toric variety associated to the secondary polytope of the unit cube. We relate the KSBA compactification considered to the Baily--Borel compactification of the same family of Enriques surfaces. Part of the KSBA boundary has a toroidal behavior, another part is isomorphic to the Baily--Borel compactification, and what remains is a mixture of these two. We relate the stable pair compactification studied here with Looijenga's semitoric compactifications.
\end{abstract}


\section{Introduction}
In the study of moduli spaces, it is important to provide compactifications which are functorial and with meaningful geometric and combinatorial properties. A leading example in this sense is the Deligne--Mumford and Knudsen compactification of the moduli space of smooth $n$-pointed curves. Another relevant example is Alexeev's compactification of the moduli space of principally polarized abelian varieties, which extended previous work of Mumford, Namikawa, and Nakamura \cite{Ale02,AN99,Mum72,Nam76}. Another case that was intensely studied is that of K3 surfaces, especially in degree $2$ see \cite{Sha80,Loo86,Sca87,Hac04,Laz16,AET19}. Similarly, compactifications of the moduli space of degree $2$ Enriques surfaces were studied by Sterk \cite{Ste91,Ste95}, who compared Shah's GIT compactification for degree $2$ Enriques surfaces in \cite{Sha81} with a semitoric Looijenga compactification \cite{Loo03}. In this paper, we study a special $4$-dimensional family of Enriques surfaces. By using the theory of stable pairs \cite{Ale94,Ale96,KSB88,Kol18}, we produce a geometric compactification which we describe explicitly and relate to other standard compactifications, such as the Baily--Borel.

Enriques surfaces were classically constructed as the normalization of the vanishing locus in $\mathbb{P}^3$ of the following equation:
\begin{equation*}
aW_0^2W_1^2W_2^2+bW_0^2W_1^2W_3^2+cW_0^2W_2^2W_3^2+dW_1^2W_2^2W_3^2+W_0W_1W_2W_3q(W_0,W_1,W_2,W_3)=0,
\end{equation*}
where $q$ is a non-degenerate quadratic form. We consider the case where $q$ is diagonal, which is an alternative description of the four dimensional family of Enriques surfaces studied in \cite{Oud10}. These arise as follows. Let $\Bl_3\mathbb{P}^2$ be the blow up of $\mathbb{P}^2$ at three general points. We have three distinct fibrations $\pi_i\colon\Bl_3\mathbb{P}^2\rightarrow\mathbb{P}^1$, $i=1,2,3$, and we choose two distinct irreducible rulings $\ell_i,\ell_i'$ for each fibration. Then an appropriate $\mathbb{Z}_2^2$-cover branched along $\sum_{i=1}^3(\ell_i+\ell_i')$ gives an Enriques surface (see Definition~\ref{definitiond16polarizedenriquessurface}). These Enriques surfaces occur in connection with Campedelli surfaces (see Remark~\ref{connectionwithcampedelli}), and Oudompheng described the Baily--Borel compactification of their period domain. In the current paper, we construct a geometric compactification of the moduli space $\mathbf{M}$ for such Enriques surfaces via Koll\'ar--Shepherd-Barron--Alexeev (KSBA) stable pairs.

Since Enriques surfaces are not of general type, we cannot use \cite{KSB88} directly, but we can do so by choosing a natural divisor transforming them into pairs of log general type: namely, we consider stable pairs $(S,\epsilon R)$, where $R$ is the ramification divisor of the above $\mathbb{Z}_2^2$-cover $S\rightarrow\Bl_3\mathbb{P}^2$ and $\epsilon$ is a small positive rational number. Now the KSBA machinery applies, enabling us to construct a compactification $\overline{\mathbf{M}}$ with geometric meaning. We have a complete description of the structure of this compactification and of the degenerate surfaces parametrized by the boundary.

\begin{theorem}[Theorem~\ref{combinatoricsstratificationstatement} and Corollary~\ref{generaldegenerationsofenriques}]
\label{firstmainresult}
The boundary of $\overline{\mathbf{M}}$ consists of two divisorial irreducible components and another irreducible component of codimension $3$. The surfaces parametrized by the general point of each one of these components are
\begin{enumerate}
\item the gluing of three del Pezzo surfaces of degree $2$ so that the dual complex is a $2$-simplex and the double locus consists of three smooth rational curves;
\item the gluing of two weak del Pezzo surfaces of degree $1$ along an elliptic curve;
\item the gluing of $\mathbb{P}^1\times\mathbb{P}^1$ and an elliptic ruled surface along a $(2,2)$ curve and a reduced fiber.
\end{enumerate}

For a full description of the stratification of the boundary of $\overline{\mathbf{M}}$ and the degenerations parametrized by it, see \S\ref{globalgeography} and \S\ref{studyofthecorrespondingz22cover}.
\end{theorem}

The first main tool in the proof of the above theorem is \cite{AP12}, which allows us to study degenerations of the pairs $\left(\Bl_3\mathbb{P}^2,\frac{1+\epsilon}{2}\sum_{i=1}^3(\ell_i+\ell_i')\right)$ instead. The corresponding degenerations of Enriques surfaces can be obtained after taking an appropriate $\mathbb{Z}_2^2$-cover. Similar ideas were used in \cite{AP09} for Campedelli surfaces, which are closely related to our example, and in \cite{AET19} for degree $2$ K3 surfaces. Now, the study of moduli compactifications of such pairs is related to the compactification of six lines in $\mathbb{P}^2$, for which the theory of hyperplane arrangements applies \cite{Ale15,HKT09}. However, as we work with $\Bl_3\mathbb{P}^2$, the stability condition in our situation is somewhat different, yielding a different compactification (see also Remark~\ref{comparingdegenofsixlines}). Therefore, instead of the theory of hyperplane arrangements, we use the theory of stable toric pairs in \cite{Ale02}, which in turn gives an explicit global description of the stable pair compactification $\overline{\mathbf{M}}$ as we will soon see. More precisely, denote by $\Delta$ the toric boundary of $(\mathbb{P}^1)^3$ and let $B\subseteq(\mathbb{P}^1)^3$ be a general effective divisor of class $(1,1,1)$. Note that $B$ is isomorphic to $\Bl_3\mathbb{P}^2$ and that $\Delta|_B$ consists of six lines. If $Q$ denotes the unit cube, then $((\mathbb{P}^1)^3,B)$ is a stable toric pair of type $Q$ according to Definition~\ref{definitionofstabletoricpair}. Let $\overline{\mathbf{M}}_Q$ be the coarse moduli space parametrizing the stable toric pairs $((\mathbb{P}^1)^3,B)$ and their degenerations. We prove the following.

\begin{theorem}[Theorem~\ref{propertiesofthemapstptoksba}]
\label{secondmaintheorem}
Let $\Sym(Q)$ be the symmetry group of $Q$. Then $\overline{\mathbf{M}}_Q/\Sym(Q)\cong\overline{\mathbf{M}}$. On a dense open subset, the isomorphism maps the $\Sym(Q)$-class of a stable toric pair $(X,B)$ to the appropriate $\mathbb{Z}_2^2$-cover of $\left(B,\left(\frac{1+\epsilon}{2}\right)\Delta|_B\right)$, where $\Delta$ denotes the toric boundary of $X$.
\end{theorem}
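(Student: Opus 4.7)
The plan is to construct a morphism $\Phi\colon\overline{M}_Q^\nu\to\overline{M}_{D_{1,6}}^\nu$ coming from the universal family of stable toric pairs, verify that $\Phi$ is $\Sym(Q)$-invariant (and thus descends to a morphism $\overline{\Phi}$ from the quotient), and then show that $\overline{\Phi}$ is bijective on geometric points. Since both the source and the target are normal projective varieties (the source because $\overline{M}_Q^\nu$ is a normalization and the quotient of a normal variety by a finite group in characteristic zero remains normal, and the target by construction), Zariski's Main Theorem will then upgrade $\overline{\Phi}$ to an isomorphism.

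For the construction of $\Phi$, let $(\mathcal{X},\mathcal{B})\to\overline{M}_Q^\nu$ denote the universal family of stable toric pairs of type $\leq Q$, with relative toric boundary $\mathcal{D}$. Over each geometric point $t$ I would check that $\left(\mathcal{B}_t,\left(\tfrac{1+\epsilon}{2}\right)\mathcal{D}|_{\mathcal{B}_t}\right)$ is KSBA-stable: on the open locus where $\mathcal{X}_t=(\mathbb{P}^1)^3$ this follows from adjunction and the genericity of $B$, while on the boundary of $\overline{M}_Q^\nu$ I would combine the local structure of stable toric pairs with \cite[Lemma 4.4]{alexeevhigherdimensional}, verifying the slc condition component-wise on the irreducible pieces of $\mathcal{B}_t$ coming from a subdivision of $Q$. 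This produces a family of stable pairs, hence a morphism to the coarse moduli space of stable surface pairs. Since the dense open locus of $\overline{M}_Q^\nu$ lands in the Enriques locus by Proposition~\ref{mainconstruction} and the target $\overline{M}_{D_{1,6}}^\nu$ is closed by definition, this morphism factors through $\overline{M}_{D_{1,6}}^\nu$, giving $\Phi$. The $\Sym(Q)$-invariance is then essentially tautological: any $\sigma\in\Sym(Q)$ acts on $\mathcal{X}$ preserving $\mathcal{D}$, so the restrictions to $\mathcal{B}_t$ and $\sigma\cdot\mathcal{B}_t$ yield isomorphic KSBA pairs via $\sigma|_{\mathcal{B}_t}$.

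It remains to show that the induced $\overline{\Phi}\colon\overline{M}_Q^\nu/\Sym(Q)\to\overline{M}_{D_{1,6}}^\nu$ is bijective on geometric points. Surjectivity on the open stratum is given by Proposition~\ref{mainconstruction}; surjectivity on the boundary amounts to showing that every degenerate KSBA pair in $\overline{M}_{D_{1,6}}^\nu$ is realized as $\left(B,\left(\tfrac{1+\epsilon}{2}\right)\Delta|_B\right)$ for some stable toric pair $(X,B)$ of type $\leq Q$, which I expect to reduce to matching the classification of degenerations worked out in Theorem~\ref{combinatoricsstratificationstatement}. For injectivity, the plan is to reconstruct the ambient $X$ from the pair $(B,\Delta|_B)$: the three rulings of $(\mathbb{P}^1)^3$ (and their degenerations) are recorded by the way the components of $\Delta|_B$ distribute on $B$, and the ambiguity in this recovery—namely, reordering or reflecting the three pencils—is precisely what $\Sym(Q)$ accounts for.

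The main obstacle I anticipate is this injectivity step, since one must simultaneously (a) identify the automorphism group of a generic pair $(B,\Delta|_B)$ with $\Sym(Q)$, and (b) rule out any further identifications on the boundary coming from automorphisms of the degenerate KSBA pair that do not lift to an automorphism of the ambient stable toric pair. Both points require a careful analysis of the universal family and of the possible toric reconstructions of the limits, but once bijectivity is secured, properness and normality on both sides conclude the proof via Zariski's Main Theorem.
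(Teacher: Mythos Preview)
Your plan has a genuine gap at the very first step. You propose to check that $\left(\mathcal{B}_t,\left(\tfrac{1+\epsilon}{2}\right)\mathcal{D}|_{\mathcal{B}_t}\right)$ is KSBA-stable for every geometric point $t$ of $\overline{M}_Q^\nu$, but this is false. When the polyhedral subdivision $\mathcal{P}$ of $(Q,Q\cap\mathbb{Z}^3)$ associated to $(X,B)$ contains a corner cut, the toric boundary $\Delta$ is not even $\mathbb{Q}$-Cartier on $X$ (Proposition~\ref{toricboundarycartier}), so the restriction $\Delta|_B$ is not defined and the pair you wrote down does not make sense. The reference you cite, \cite[Lemma 4.4]{alexeevhigherdimensional}, concerns the pair $(X,\Delta+\epsilon B)$ on the ambient stable toric variety, not the pair obtained by restricting to $B$; it does not rescue the argument. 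The paper handles this by introducing, for each stable toric pair $(X,B)$, a modification $(X^\bullet,B^\bullet)$ (Section~\ref{themodifiedfamilywithbullet}) which effectively removes the corner cuts and drops the coefficient at each apex; only then is $\left(B^\bullet,\left(\tfrac{1+\epsilon}{2}\right)\Delta^\bullet|_{B^\bullet}\right)$ stable (Theorem~\ref{stabilityofthemodifiedpair}).

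This has two further consequences for your outline. First, since $(X,B)\mapsto(X^\bullet,B^\bullet)$ forgets the apex coefficient, the induced map on $\Bbbk$-points is not fiberwise given by a family of stable pairs over all of $\overline{M}_Q^\nu$; the paper therefore does not build $\Phi$ from a universal family (which a coarse moduli space need not carry anyway) but instead defines the morphism only on the open set $\mathcal{U}_{\sm}$ and extends it to $\overline{M}_Q^\nu$ via the one-parameter extension criterion of \cite[Theorem 7.3]{giansiracusagillam}, checking that the KSBA limit depends only on the stable-toric limit. Second, the paper does not attempt your injectivity step (b): rather than proving $\overline{\Phi}$ is bijective on all geometric points, it shows the map is generically bijective (Proposition~\ref{mainconstruction}) and quasi-finite, the latter by verifying that no $1$-dimensional boundary stratum of $\overline{M}_Q^\nu$ is contracted. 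Zariski's Main Theorem then finishes as you say. Your reconstruction-of-$X$-from-$(B,\Delta|_B)$ strategy for injectivity would, at minimum, have to contend with the information lost under the bullet modification on the corner-cut strata.
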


The main difficulty in the proof of Theorem~\ref{secondmaintheorem} is that, away from the mentioned dense open subset, $\left(B,\left(\frac{1+\epsilon}{2}\right)\Delta|_B\right)$ is not stable. This happens if and only if the polyhedral subdivision of $Q$ associated to $(X,B)$ has what we call a \emph{corner cut} (see Definition~\ref{definitionofcornercut}). This is analyzed in \S\ref{modificationsofthepairs}, where we describe a modification of $(X,B)$, denoted by $(X^\bullet,B^\bullet)$, such that $\left(B^\bullet,\left(\frac{1+\epsilon}{2}\right)\Delta^\bullet|_{B^\bullet}\right)$ is a stable pair. As $\overline{\mathbf{M}}_Q$ is the projective toric variety associated to the secondary polytope of $Q$, the isomorphism in Theorem~\ref{secondmaintheorem} yields an explicit description of $\overline{\mathbf{M}}$.

A different compactification can be constructed by Hodge theory. As for K3 surfaces, the Enriques surfaces considered here can be characterized Hodge-theoretically as being $D_{1,6}$-polarized in the sense of Dolgachev \cite{Dol96}. The moduli space for Enriques surfaces, as for K3 surfaces, is the quotient of a bounded Hermitian symmetric domain $\mathcal{D}$ of type IV by the action of an appropriate arithmetic group $\Gamma$. In this case, there is a natural compactification of $\mathcal{D}/\Gamma$, namely the Baily--Borel compactification $\overline{\mathcal{D}/\Gamma}^*$. As previously mentioned, \cite{Oud10} studied this particular situation and described the boundary of $\overline{\mathcal{D}/\Gamma}^*$. Furthermore, Oudompheng showed in \cite[\S4]{Oud10} that $\overline{\mathcal{D}/\Gamma}^*$ is isomorphic to the quotient by a finite group of the GIT compactification of the moduli space of six lines in $\mathbb{P}^2$. The next theorem relates the KSBA compactification $\overline{\mathbf{M}}$ with $\overline{\mathcal{D}/\Gamma}^*$.

\begin{theorem}[Theorem~\ref{morphismtobailyborelcompactification}]
There exists a birational morphism $\overline{\mathbf{M}}\rightarrow\overline{\mathcal{D}/\Gamma}^*$ extending the period map to the boundary of $\overline{\mathbf{M}}$.
\end{theorem}

Let us take a closer look to the morphism $\overline{\mathbf{M}}\rightarrow\overline{\mathcal{D}/\Gamma}^*$. Oudompheng showed that the boundary of $\overline{\mathcal{D}/\Gamma}^*$ consists of three $0$-dimensional boundary components, called $0$-cusps, and two $1$-dimensional boundary components, called $1$-cusps. With reference to Figure~\ref{pictureboundarybailyborel}, we observe that the boundary of the KSBA compactification $\overline{\mathbf{M}}$ has a toroidal behavior in a neighborhood of the preimage of the even $0$-cusp, and is isomorphic to the Baily--Borel compactification in a neighborhood of the preimage of the odd $0$-cusp of type $2$. Above the middle odd $0$-cusp of type $1$ the behavior of $\overline{\mathbf{M}}$ is not toroidal or Baily--Borel. These considerations make us consider Looijenga's semitoric compactifications (see \cite{Loo03}), which generalize the Baily--Borel and toroidal compactifications in the case of type IV Hermitian symmetric domains. A semitoric compactification $(\overline{\mathcal{D}/\Gamma})_\Sigma$ depends on the choice of $\Sigma$, which is an admissible decomposition of the conical locus (see \S\ref{admissibledecompositionoftheconicallocusofd}). To construct $\Sigma$ in our case, for each $0$-cusp we consider the associated hyperbolic lattice, and we consider the subdivision of one connected component of $x^2>0$ given by the mirrors of the reflections with respect to the vectors of square $-1$. To study the stratification of $(\overline{\mathcal{D}/\Gamma})_\Sigma$ we compute a fundamental domain for the discrete reflection group generated by the reflections with respect to the $(-1)$-vectors. This calculation is not included in the current paper, but it can be found in \cite[\S8.6]{Sch17}. The next theorem gives a set-theoretic comparison of the boundaries of $\overline{\mathbf{M}}$ and $(\overline{\mathcal{D}/\Gamma})_\Sigma$.

\begin{theorem}[Theorem~\ref{bijectionstrataksbasemitoriccompactification}]
The admissible decomposition $\Sigma$ in Definition~\ref{ourchoiceofadmissibledecomposition} produces a semitoric compactification $(\overline{\mathcal{D}/\Gamma})_\Sigma$ birational to $\overline{\mathbf{M}}$ and whose boundary strata are in bijection with the boundary strata of $\overline{\mathbf{M}}$. This bijection preserves the dimensions of the strata and the intersections between them.
\end{theorem}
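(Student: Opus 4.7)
The plan is to leverage the fact that both $\overline{M}_{D_{1,6}}^\nu$ and any semitoric compactification $\overline{\mathcal{D}/\Gamma}^\Sigma$ dominate $\overline{\mathcal{D}/\Gamma}^{BB}$ --- the former by Theorem~\ref{morphismtobailyborelcompactification}, the latter by the very construction of Looijenga's compactifications --- and that both compactify the same open locus $\mathcal{D}/\Gamma$. Hence, once $\Sigma$ from Definition~\ref{ourchoiceofadmissibledecomposition} is shown to be admissible, birationality of $\overline{\mathcal{D}/\Gamma}^\Sigma$ and $\overline{M}_{D_{1,6}}^\nu$ is automatic, and the real content is to match the boundary stratifications cusp-by-cusp.

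First, I would verify admissibility of $\Sigma$. The datum consists, at each $0$-cusp, of a $\Gamma$-invariant rational polyhedral decomposition of the positive cone in the associated hyperbolic lattice, and at each $1$-cusp of a compatible subdivision. Our $\Sigma$ is assembled from the Vinberg fundamental chambers computed in Section~\ref{vinberg'salgorithmattheodd0cuspoftype1} and its companion sections. At the even and odd type~$2$ cusps the chambers have finitely many facets and admissibility is routine. At the odd $0$-cusp of type~$1$, where Vinberg's algorithm produces an infinite Coxeter diagram, admissibility reduces to the fact that a Coxeter chamber of a discrete hyperbolic reflection group tiles the positive cone locally finitely, irrespective of whether the diagram is finite.

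Next, for each $0$-cusp $F$ I would apply Looijenga's description of the fiber of $\overline{\mathcal{D}/\Gamma}^\Sigma\to\overline{\mathcal{D}/\Gamma}^{BB}$ over $F$ as a $\Gamma_F$-quotient of a semitoric variety whose strata are indexed by faces of the decomposition of the positive cone. At the odd type~$2$ cusp, $\Sigma$ is trivial, so the fiber is a single point, matching the unique KSBA stratum over that cusp. At the even cusp, by Theorem~\ref{secondmaintheorem} and Theorem~\ref{combinatoricsstratificationstatement}, the toroidal fan we use for $\Sigma$ is combinatorially identified with the portion of the secondary fan of $Q$ (modulo $\Sym(Q)$) parametrizing the relevant polyhedral subdivisions, producing a dimension-preserving bijection of strata. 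At the odd type~$1$ cusp, the mixed character of $\Sigma$ --- reflection chamber in one factor, trivial in the other --- produces the mixed toroidal/Baily-Borel behavior observed on the KSBA side. A parallel analysis at the two $1$-cusps matches the two divisorial boundary components.

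Assembling these local bijections yields a global bijection of boundary stratifications; dimension preservation is built into the indexing, and the incidence pattern is preserved because inclusion of faces of $\Sigma$ corresponds on the KSBA side to specialization of stable pairs, equivalently to refinement of polyhedral subdivisions of $Q$ modulo $\Sym(Q)$, via the modifications $(X^\bullet,B^\bullet)$ of Section~\ref{modificationsofthepairs}. I expect the main obstacle to be the odd $0$-cusp of type~$1$: one must show that, modulo $\Gamma_F$, only finitely many cones of the infinite Vinberg chamber give distinct boundary strata in $\overline{\mathcal{D}/\Gamma}^\Sigma$, and that these match precisely the finitely many KSBA strata identified in Theorem~\ref{combinatoricsstratificationstatement}. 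All remaining steps are bookkeeping with the combinatorial data assembled earlier in the paper.
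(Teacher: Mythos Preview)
Your overall architecture---birationality is automatic since both spaces compactify $\mathcal{D}/\Gamma$, then match strata cusp-by-cusp---is exactly the paper's. But two of your key steps diverge from what the paper actually does, and one of them is not quite right.

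For admissibility, you invoke local finiteness of the Coxeter tiling. That addresses the ``locally rational'' part of Definition~\ref{admissibledecompositionoftheconicallocusofd}, but not the actual compatibility condition: for every $\mathbb{Q}$-isotropic plane $J$, the support space of $C_{J,+}$ must be independent of the choice of isotropic line $I\subset J$. The paper checks this directly in Remarks~\ref{supportspaceeven0-cusp}, \ref{supportspaceodd0-cuspoftype1}, \ref{supportspaceodd0-cuspoftype2} by computing, at each $0$-cusp and for each incident $1$-cusp, whether $C_{J,+}$ is orthogonal to any $(-1)$-vector, and reading off the support space accordingly (Corollary~\ref{sigmaisadmissible}). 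Your argument as written does not touch this condition.

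For the bijection of strata, you propose to identify the Vinberg fan at the even cusp with (a portion of) the secondary fan of $Q$ modulo $\Sym(Q)$. This is an appealing conceptual statement, but the paper neither states nor proves it; instead, the paper proceeds by brute enumeration. The faces of the Vinberg chamber are indexed by the elliptic and maximal parabolic subdiagrams of the Coxeter diagram, and the paper simply lists these (right-hand sides of Figures~\ref{coxeterdiagramandsubdiagramseven0-cusp} and \ref{coxeterdiagramandsubdiagramsodd0-cusptype1}) and compares them one-by-one against the already-computed KSBA stratification of Figure~\ref{explicitstratification}. That is the entire content of Observations~\ref{bijectionstrataeven0-cusp} and \ref{bijectionstrataodd1-cuspoftype1}, and the proof of Theorem~\ref{bijectionstrataksbasemitoriccompactification} is literally one sentence pointing to these observations together with Observation~\ref{bijectionstrataodd1-cuspoftype2}. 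Your fan-identification route would be more satisfying if it worked, but it is an extra theorem you would have to prove, not something available from earlier in the paper.
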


We expect this to be an isomorphism, but we can not yet prove it. This will be object of future investigation. We mention that in \cite{AET19} a geometric compactifications of moduli of K3 surfaces is identified with a semitoric compactification which is not toroidal or Baily--Borel.

The paper is organized as follows. In \S\ref{enriquesandchoicedivisor} we define the Enriques surfaces of interest and the moduli space we want to compactify. In \S\ref{preliminaries-stable-pairs-stable-toric-pairs} we briefly recall the theory of stable pairs and stable toric pairs that we need. The technical results in \S\ref{modificationsofthepairs} and \S\ref{wallcrossingstudy} allow us to construct the morphism $\overline{\mathbf{M}}_Q\rightarrow\overline{\mathbf{M}}$ in \S\ref{sectiononksbacompactification}, where we also describe the stable pairs parametrized by the boundary of $\overline{\mathbf{M}}$ and its stratification. In \S\ref{relationwithbailyborel} we construct the morphism $\overline{\mathbf{M}}\rightarrow\overline{\mathcal{D}/\Gamma}^*$. Finally, the connection with Looijenga's semitoric compactifications is discussed in \S\ref{looijengasemitoriccompactifications}. We work over $\mathbb{C}$.


\section*{Acknowledgments}
I would like to express my gratitude to my advisor, Valery Alexeev, for guiding me through this project and for his help. I am also grateful to Robert Varley for many discussions related to this work. Many thanks to Patricio Gallardo for great suggestions and to Adrian Brunyate for helpful conversations. I would like to thank Renzo Cavalieri, Klaus Hulek, Dino Lorenzini, Rita Pardini, Roberto Svaldi, Giancarlo Urz\'ua, and Alessandro Verra for their feedback. I also thank the anonymous referees for many valuable suggestions. The figures in this paper were realized using the software GeoGebra, Copyright \copyright~International GeoGebra Institute, 2013. I gratefully acknowledge financial support from the Office of the Graduate School of the University of Georgia, the NSF grant DMS-1603604, and the Research and Training Group in Algebra, Algebraic Geometry, and Number Theory, at the University of Georgia.


\section{$D_{1,6}$-polarized Enriques surfaces}
\label{enriquesandchoicedivisor}


\subsection{Enriques surfaces and $\mathbb{Z}_2^2$-covers}
\label{d16polarizedenriquessurfaces}

A \emph{variety} is a connected and reduced scheme of finite type over $\mathbb{C}$ (in particular, a variety need not be irreducible). A \emph{surface} is a $2$-dimensional projective variety. An \emph{Enriques surface} $Y$ is a smooth irreducible surface with $2K_Y\sim0$ and $h^0(Y,\omega_Y)=h^1(Y,\mathcal{O}_Y)$. These properties are enough to imply that $S$ is minimal with Kodaira dimension $0$, $h^0(Y,\omega_Y)=0$, and $h^1(Y,\mathcal{O}_Y)=0$.

\begin{definition}
\label{definitiond16polarizedenriquessurface}
Let $\Bl_3\mathbb{P}^2$ be the blow up of $\mathbb{P}^2$ at $[1:0:0],[0:1:0],[0:0:1]$. Then $\Bl_3\mathbb{P}^2$ comes with three genus zero pencils $\pi_1,\pi_2,\pi_3\colon\Bl_3\mathbb{P}^2\rightarrow\mathbb{P}^1$. Denote by $\ell_i,\ell_i'$ two distinct irreducible elements in the $i$-th pencil, $i=1,2,3$. Assume that the divisor $\sum_{i=1}^3(\ell_i+\ell_i')$ has no triple intersection points. In what follows, we use the general theory of abelian covers developed in \cite{Par91}.

Let $\mathbb{Z}_2^2=\{e,a,b,c\}$, where $e$ is the identity element, and let $\{\chi_0,\chi_1,\chi_2,\chi_3\}$ be the characters of $\mathbb{Z}_2^2$ with $\chi_0=1$ and $\chi_1(b)=\chi_1(c)=\chi_2(a)=\chi_2(c)=\chi_3(a)=\chi_3(b)=-1$. Define $D_a=\ell_1+\ell_1',D_b=\ell_2+\ell_2',D_c=\ell_3+\ell_3'$. Consider the building data (see \cite[Definition 2.1]{Par91}) consisting of the divisors $D_a,D_b,D_c$ and the line bundles $L_{\chi_1},L_{\chi_2},L_{\chi_3}$ satisfying
\begin{equation*}
2L_{\chi_1}=D_b+D_c,~2L_{\chi_2}=D_a+D_c,~2L_{\chi_3}=D_a+D_b.
\end{equation*}
This building data determines a $\mathbb{Z}_2^2$-cover $\pi\colon S\rightarrow\Bl_2\mathbb{P}^2$ branched along $\sum_{i=1}^3(\ell_i+\ell_i')$, which is unique up to isomorphism of $\mathbb{Z}_2^2$-covers (see \cite[Theorem 2.1]{Par91}). By \cite[Proposition 3.1]{Par91} we have that $S$ is smooth, and using \cite[Proposition 4.2, formula (4.8)]{Par91} one can compute that $\chi(\mathcal{O}_S)=1$, which implies that $h^0(S,\omega_S)=h^1(S,\mathcal{O}_S)$. If $R$ denotes the ramification divisor of the cover, then $K_S\sim\pi^*(K_{\Bl_3\mathbb{P}^2})+R$ and $2R\sim\pi^*(\sum_{i=1}^3(\ell_i+\ell_i'))$ imply that $2K_S\sim0$, hence $S$ is an Enriques surface. These are the Enriques surfaces studied in \cite{Oud10}, and to make a clear connection with Oudompheng's paper we also call them \emph{$D_{1,6}$-polarized Enriques surfaces}. The reason for this name is explained in Remark~\ref{explainthename}. Observe that the ramification divisor of the $\mathbb{Z}_2^2$-cover consists of six genus one curves.
\end{definition}

\begin{remark}
$D_{1,6}$-polarized Enriques surfaces can be described in the following alternative way. In $\mathbb{P}^2$, consider six distinct lines $\overline{\ell}_i,\overline{\ell}_i'$, $i=1,2,3$, so that the divisor $\sum_{i=1}^3(\overline{\ell}_i+\overline{\ell}_i')$ does not have triple intersection points. Let $S'\rightarrow\mathbb{P}^2$ be the $\mathbb{Z}_2^2$-cover with building data analogous to the one in Definition~\ref{definitiond16polarizedenriquessurface}. Note that $S'$ is singular, and its singular locus consists of exactly six $A_1$ singularities: two above each intersection point $\overline{\ell}_i\cap\overline{\ell}_i'$. The minimal resolution $S\rightarrow S'$ is a $D_{1,6}$-polarized Enriques surface.

Given this alternative definition, it is natural to ask what is the connection between the Enriques surface $S$ above and the K3 surface $Z$ given by the minimal resolution of the double cover of $\mathbb{P}^2$ branched along $\sum_{i=1}^3(\overline{\ell}_i+\overline{\ell}_i')$. This can be understood by looking at the universal K3 cover $X\rightarrow S$. The K3 surface $X$ can be viewed as the minimal resolution of an appropriate $\mathbb{Z}_2^3$-cover of $\mathbb{P}^2$ branched along the six lines, and $Z$ can be obtained as the minimal resolution of the quotient of $X$ by an appropriate order four subgroup of $\mathbb{Z}_2^3$. It turns out that the N\'eron--Severi lattice of a very general K3 surface $X$ is not isometric to the N\'eron--Severi lattice of a very general K3 surface $Z$, so these two families of K3 surfaces are distinct. All this is studied in detail in \cite{Sch18}.
\end{remark}

\begin{remark}
\label{explainthename}
Let $D_{1,6}$ denote the index $2$ sublattice of $\langle1\rangle\oplus\langle-1\rangle^{\oplus6}$ of vectors of even square. Note that $D_{1,6}$ is isometric to $U\oplus D_5$. Let $e_0,e_1,\ldots,e_6$ be the canonical basis of $\langle1\rangle\oplus\langle-1\rangle^{\oplus6}$. According to \cite{Oud10}, a $D_{1,6}$-polarized Enriques surface $S$ can be equivalently defined to be an Enriques surface whose Picard group contains a primitively embedded copy of $D_{1,6}$ such that:
\begin{enumerate}
\item The vector $2e_0$ corresponds to a nef divisor class $H$ (which is the preimage of a general line in $\Bl_3\mathbb{P}^2$ under the $\mathbb{Z}_2^2$-cover);
\item Let $C_1,C_2,C_3$ be the exceptional divisors of the blow up $\Bl_3\mathbb{P}^2\rightarrow\mathbb{P}^2$. The preimage of $C_i$ under the $\mathbb{Z}_{2}^2$-cover consists of two disjoint smooth rational curves that we denote by $R_i^+,R_i^-$. Then we ask for the vectors $e_1\pm e_2,e_3\pm e_4,e_5\pm e_6$ to correspond to the six irreducible curves $R_1^\pm,R_2^\pm,R_3^\pm$ respectively.
\end{enumerate}
Note that for $i=1,2,3$, the linear system $|H-R_i^+-R_i^-|$ is a genus one pencil, and the preimages in $S$ of the lines $\ell_i,\ell_i'$ in Definition~\ref{definitiond16polarizedenriquessurface} give the two half-fibers of this pencil (see \cite[Chapter~VIII, \S17]{BHPV04} for the definition of half-fiber).
\end{remark}

\begin{remark}
\label{connectionwithcampedelli}
If $S$ is a $D_{1,6}$-polarized Enriques surface, then the divisor $C=H+\sum_{i=1}^3(R_i^++R_i^-)$ is divisible by $2$ in $\Pic(S)$. The $\mathbb{Z}_2$-cover of $S$ branched along $C$ has six $(-1)$-curves. Blowing down these curves we obtain a Campedelli surface with (topological) fundamental group $\mathbb{Z}_2^3$ (these were considered in \cite{AP09}). Conversely, such a Campedelli surface $X$ can be realized as the $\mathbb{Z}_2^3$-cover of $\mathbb{P}^2$ branched along seven lines. The minimal desingularization of the quotient of $X$ by the involution fixing pointwise the preimage of one of these lines is a $D_{1,6}$-polarized Enriques surface.
\end{remark}


\subsection{The family of $D_{1,6}$-polarized Enriques surfaces}
\label{sectiononthefamilyofd16polarizedenriquessurfaces}

\begin{definition}
\label{familyD16polarizedEnriquessurfaces}
Let $((c_{000},c_{100},\ldots,c_{111}),([X_0X_1],[Y_0,Y_1],[Z_0,Z_1]))$ be coordinates in $\mathbb{G}_m^8\times(\mathbb{P}^1)^3$. Let $\mathcal{X}''\subseteq\mathbb{G}_m^8\times(\mathbb{P}^1)^3$ be the closed subscheme defined by the vanishing of
\[
\sum_{i,j,k=0,1}c_{ijk}X_i^2Y_j^2Z_k^2=0.
\]
Let $\mathbf{U}\subseteq\mathbb{G}_m^8$ be the dense open subset such that the corresponding fibers in $\mathcal{X}''$ are smooth. Let $\mathcal{X}'=\mathcal{X}''|_\mathbf{U}$. If $X\subseteq\mathcal{X}'$ is a fiber, then $X$ is a smooth hypersurfaces of class (2,2,2) in $(\mathbb{P}^1)^3$. Therefore, $X$ is a K3 surface ($K_X\sim0$ by the adjunction formula and $h^1(X,\mathcal{O}_X)=0$ can be computed using the long exact sequence in cohomology associated to $0\rightarrow\mathscr{I}_X\rightarrow\mathcal{O}_{(\mathbb{P}^1)^3}\rightarrow\mathcal{O}_X\rightarrow0$). Moreover, $X$ comes with a fixed-point-free involution given by
\[
\iota\colon([X_0:X_1],[Y_0:Y_1],[Z_0:Z_1])\mapsto([X_0:-X_1],[Y_0:-Y_1],[Z_0:-Z_1]).
\]
Hence, $X/\iota$ is an Enriques surface (see \cite[Proposition VIII.17]{Bea96}). Let us show $X/\iota$ is a $D_{1,6}$-polarized Enriques surface. The restriction to $X$ of the morphism
\[
([X_0:X_1],[Y_0:Y_1],[Z_0:Z_1])\mapsto([X_0^2:X_1^2],[Y_0^2:Y_1^2],[Z_0^2:Z_1^2])
\]
realizes $X$ as a $\mathbb{Z}_2^3$-cover of $B\subseteq(\mathbb{P}^1)^3$ given by
\[
\sum_{i,j,k=0,1}c_{ijk}X_iY_jZ_k=0.
\]
We have that $B$ is a del Pezzo surface of degree $6$ (hence, $B\cong\Bl_3\mathbb{P}^2$), and the branch locus of $X\rightarrow B$ is given by $\Delta|_B$, where $\Delta$ is the toric boundary of $(\mathbb{P}^1)^3$. Observe that $\Delta|_B$ consists of six lines, two for each genus zero fibration $B\rightarrow\mathbb{P}^1$, without triple intersection points. So $X/\iota\rightarrow B$ is the $\mathbb{Z}_2^2$-cover that gives a $D_{1,6}$-polarized Enriques surface. Notice that there are several ways to take the $\mathbb{Z}_2^2$-cover of $B$ branched along $\Delta|_B$ by appropriately varying the building data. However, these other choices produce rational surfaces or K3 surfaces. The involution $\iota$ acts on the whole family $\mathcal{X}'$, so that $\mathcal{X}=\mathcal{X}'/\iota\rightarrow\mathbf{U}$ is a family of $D_{1,6}$-polarized Enriques surfaces.
\end{definition}

\begin{remark}
\label{descriptionusingenriquessextics}
The general Enriques surface with degree $6$ polarization can be realized as the normalization of the vanishing locus in $\mathbb{P}^3$ of the following equation:
\begin{equation*}
aW_0^2W_1^2W_2^2+bW_0^2W_1^2W_3^2+cW_0^2W_2^2W_3^2+dW_1^2W_2^2W_3^2+W_0W_1W_2W_3q(W_0,W_1,W_2,W_3)=0,
\end{equation*}
where $q$ is a non-degenerate quadratic form (see \cite[\S4]{Muk12}). By \cite[Proposition 4.1]{Muk12}, the universal K3 cover of such Enriques surfaces is an appropriate $(2,2,2)$ hypersurface in $(\mathbb{P}^1)^3$ invariant under the involution $\iota$ above. Under this correspondence, the K3 surfaces in Definition~\ref{familyD16polarizedEnriquessurfaces} are the universal covers of the degree $6$ Enriques surfaces for which the quadratic form $q$ is diagonal, providing an alternative description of the $D_{1,6}$-polarized Enriques surfaces. More explicitly, consider the sextic hypersurface $S$ given by
\begin{align*}
&c_{001}W_0^2W_1^2W_2^2+c_{010}W_0^2W_1^2W_3^2+c_{100}W_0^2W_2^2W_3^2+c_{111}W_1^2W_2^2W_3^2\\
+&W_0W_1W_2W_3(c_{000}W_0^2+c_{011}W_1^2+c_{101}W_2^2+c_{110}W_3^2)=0.
\end{align*}
Let $X\subseteq(\mathbb{P}^1)^3$ be the K3 surface given by $\sum_{i,j,k=0,1}c_{ijk}X_i^2Y_j^2Z_k^2=0$. The morphism $X\rightarrow S$ is explicitly given by the restriction to $X$ of
\begin{align*}
(\mathbb{P}^1)^3&\rightarrow\mathbb{P}^3,\\
([X_0:X_1],[Y_0:Y_1],[Z_0:Z_1])&\mapsto[X_0Y_0Z_0:X_0Y_1Z_1:X_1Y_0Z_1:X_1Y_1Z_0].
\end{align*}
\end{remark}

The next proposition shows that the family $\mathcal{X}\rightarrow\mathbf{U}$ in Definition~\ref{familyD16polarizedEnriquessurfaces} captures all possible $D_{1,6}$-polarized Enriques surfaces, and illustrates which fibers are pairwise isomorphic.

\begin{proposition}
\label{mainconstruction}
Consider $\Bl_3\mathbb{P}^2$ together with a divisor $\sum_{i=1}^3(\ell_i+\ell_i')$ (see Definition~\ref{definitiond16polarizedenriquessurface}) without triple intersection points. Then there exists $B=V\left(\sum_{i,j,k=0,1}c_{ijk}X_iY_jZ_k\right)\subseteq(\mathbb{P}^1)^3$ with coefficients $c_{ijk}\neq0$ such that $\left(\Bl_3\mathbb{P}^2,\sum_{i=1}^3(\ell_i+\ell_i')\right)$ is isomorphic to $(B,\Delta|_B)$. Moreover, such $B\subseteq(\mathbb{P}^1)^3$ is uniquely determined up to the action of $\mathbb{G}_m^4\rtimes\Sym(Q)$ on the coefficients of $B$, where $Q$ is the unit cube. Hence, $\mathbf{U}/(\mathbb{G}_m^4\rtimes\Sym(Q))$ is the moduli space of $D_{1,6}$-polarized Enriques surfaces with our choice of divisor.
\end{proposition}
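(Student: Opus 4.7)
The plan is to construct $B$ explicitly by embedding $\Bl_3\mathbb{P}^2$ into $(\mathbb{P}^1)^3$ via its three genus-$0$ pencils, and then to identify $\sum(\ell_i+\ell_i')$ with $\Delta|_B$ after a suitable choice of coordinates on each $\mathbb{P}^1$ factor. The key observation is that $\Bl_3\mathbb{P}^2$ is a smooth del Pezzo surface of degree $6$, and its three pencils together realize the anticanonical embedding once composed with the Segre map.

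First I would consider $\phi = (\pi_1,\pi_2,\pi_3)\colon \Bl_3\mathbb{P}^2 \to (\mathbb{P}^1)^3$. Writing $\ell$ for the pullback of the class of a line in $\mathbb{P}^2$ and $e_1,e_2,e_3$ for the exceptional divisors, the fiber class of $\pi_i$ is $f_i := \ell - e_i$, and $f_1+f_2+f_3 = -K_{\Bl_3\mathbb{P}^2}$. Hence $\phi^*\mathcal{O}(1,1,1) \cong \mathcal{O}(-K_{\Bl_3\mathbb{P}^2})$, which is very ample on the del Pezzo. The composition of $\phi$ with the Segre embedding $(\mathbb{P}^1)^3 \hookrightarrow \mathbb{P}^7$ is the anticanonical embedding into a hyperplane $\mathbb{P}^6 \subset \mathbb{P}^7$; since the Segre map is a closed embedding, $\phi$ is a closed embedding too. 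A Riemann--Roch plus Kodaira vanishing computation yields $h^0(-K_{\Bl_3\mathbb{P}^2}) = 7$ while $h^0(\mathcal{O}(1,1,1)) = 8$, so the pullback has a $1$-dimensional kernel producing a unique (up to scalar) equation $\sum_{i,j,k} c_{ijk}X_iY_jZ_k = 0$ cutting out $B := \phi(\Bl_3\mathbb{P}^2)$. After post-composing each $\pi_i$ with an automorphism of $\mathbb{P}^1$ that sends $\pi_i(\ell_i)$ to $[0:1]$ and $\pi_i(\ell_i')$ to $[1:0]$, the identifications $\ell_i = B \cap V(X_0)$, $\ell_i' = B \cap V(X_1)$ (and analogous ones for the $Y,Z$ factors) give $\sum_{i=1}^3(\ell_i+\ell_i') = \Delta|_B$. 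Moreover, each torus-fixed point $p_{ijk} \in (\mathbb{P}^1)^3$ lies on exactly one facet from each pair $V(X_0)/V(X_1), V(Y_0)/V(Y_1), V(Z_0)/V(Z_1)$, so $B$ passing through $p_{ijk}$ is equivalent to three curves among $\{\ell_1,\ell_1',\ell_2,\ell_2',\ell_3,\ell_3'\}$ (one from each pair) meeting at a common point; since evaluating the defining polynomial of $B$ at $p_{ijk}$ returns $c_{ijk}$, the hypothesis of no triple intersection is exactly $c_{ijk} \neq 0$ for all $i,j,k$.

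For uniqueness, I would observe that the three pencils $\pi_i$ are intrinsic to $\Bl_3\mathbb{P}^2$: a class analysis shows $|f_i| = |\ell - e_i|$ are the only genus-$0$ pencils. Hence two embeddings $\phi,\phi'$ satisfying the conclusion differ only by the labeling freedom, namely a permutation of the three $\mathbb{P}^1$ factors (the $S_3$-action), a swap $[X_0:X_1] \leftrightarrow [X_1:X_0]$ in any factor (encoding which of $\ell_i,\ell_i'$ maps to $[0:1]$, giving $(\mathbb{Z}/2)^3$), and a scaling $[X_0:X_1] \mapsto [\lambda X_0:X_1]$ in each factor (the remaining automorphism of $\mathbb{P}^1$ fixing the two distinguished points, giving $\mathbb{G}_m^3$). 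These together generate exactly $\mathbb{G}_m^3 \rtimes ((\mathbb{Z}/2)^3 \rtimes S_3) = \mathbb{G}_m^3 \rtimes \Sym(Q) = \Aut((\mathbb{P}^1)^3,\Delta)$.

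The main technical point is verifying that $\phi$ is a closed embedding with image a $(1,1,1)$-divisor; I would handle this by reducing to the standard fact that $-K$ is very ample on a smooth del Pezzo of degree $6$ via the Segre factorization. Everything else is bookkeeping: explicit evaluation of the cubic form at the eight torus-fixed points, and elementary changes of coordinates on each $\mathbb{P}^1$ factor.
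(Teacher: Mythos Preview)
Your proof is correct and follows essentially the same approach as the paper: embed $\Bl_3\mathbb{P}^2$ into $(\mathbb{P}^1)^3$ via the three pencils, adjust coordinates on each factor so that $\ell_i,\ell_i'$ map to $0,\infty$, read off the $(1,1,1)$-equation, and check that the absence of triple points forces all $c_{ijk}\neq0$. Your justification that $\phi$ is a closed embedding (factoring through the Segre map as the anticanonical morphism of a degree-$6$ del Pezzo) and your $h^0$ count are more explicit than the paper, which simply asserts that the image is a $(1,1,1)$-divisor; likewise your uniqueness argument via ``the three genus-$0$ pencils are the only ones on $\Bl_3\mathbb{P}^2$'' is a cleaner way to see that any two such $B$ differ by an element of $\mathbb{G}_m^3\rtimes\Sym(Q)$, whereas the paper instead reduces to the normal form $V(X_0Y_0Z_0-X_1Y_1Z_1)$ and omits the verification.
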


\begin{proof}
Consider the three projections $\pi_i\colon\Bl_3\mathbb{P}^2\rightarrow\mathbb{P}^1$, $i=1,2,3$. Let $\ell_i=\pi_i^{-1}([a_{0i}:a_{1i}])$ and $\ell_i'=\pi_i^{-1}([a_{0i}':a_{1i}'])$. The morphism $(\pi_1,\pi_2,\pi_3)\colon\Bl_3\mathbb{P}^2\rightarrow(\mathbb{P}^1)^3$ is an embedding whose image is a divisor of class $(1,1,1)$ (observe that the restriction of $\Delta$ to this divisor gives the six $(-1)$-curves, each one with multiplicity $2$). For each one of the three copies of $\mathbb{P}^1$ choose an automorphism $\varphi_i$, $i=1,2,3$, sending $[a_{0i}:a_{1i}],[a_{0i}':a_{1i}']$ to $[1:0],[0:1]$ respectively. Let $B$ be the image of the composition of the embedding $(\pi_1,\pi_2,\pi_3)$ followed by the automorphism $(\varphi_1,\varphi_2,\varphi_3)$. Then, under this morphism, $\Bl_3\mathbb{P}^2$ is isomorphic to $B$ and $\sum_{i=1}^3(\ell_i+\ell_i')$ corresponds to $\Delta|_B$. Moreover, $B=V\left(\sum_{i,j,k=0,1}c_{ijk}X_iY_jZ_k\right)$, where the coefficients $c_{ijk}$ are nonzero, or otherwise $\Delta|_B$ would have triple intersection points.

In the construction of $B$ above we made some choices. The group $\Sym(Q)$ acts by permuting the three projections $\pi_1,\pi_2,\pi_3$, and for each $i$ it exchanges $[a_{0i}:a_{1i}]$ and $[a_{0i}':a_{1i}']$ (note that $\Sym(Q)$ is isomorphic to the wreath product $\mathbb{Z}_2\wr S_3$). Each $\varphi_i$ is uniquely determined up to $\mathbb{G}_m$, and an additional $\mathbb{G}_m$ acts by rescaling the coefficients $c_{ijk}$. This describes an action of $\mathbb{G}_m^4\rtimes\Sym(Q)$ on the vector of coefficients $(c_{000},c_{100},\ldots,c_{111})$. Observe that our construction of $B$ is invariant under the action of $\Aut(\Bl_3\mathbb{P}^2)$ on the line arrangement (see \cite[Theorem 8.4.2]{Dol12} for the description of $\Aut(\Bl_3\mathbb{P}^2)$).

To conclude, we show that $B=V\left(\sum_{i,j,k=0,1}c_{ijk}X_iY_jZ_k\right)\subseteq(\mathbb{P}^1)^3$ with $c_{ijk}\neq0$ such that $\left(\Bl_3\mathbb{P}^2,\sum_{i=1}^3(\ell_i+\ell_i')\right)\cong(B,\Delta|_B)$ is unique up to the above $(\mathbb{G}_m^4\rtimes\Sym(Q))$-action. But this is true because, up to $\mathbb{G}_m^4\rtimes\Sym(Q)$, there is a unique way to realize $\Bl_3\mathbb{P}^2$ in $(\mathbb{P}^1)^3$ so that the six $(-1)$-curves are given by the restriction of $\Delta$, and this is given by $V(X_0Y_0Z_0-X_1Y_1Z_1)$ (we omit the proof of this).
\end{proof}

We now construct a compactification of $\mathbf{U}/(\mathbb{G}_m^4\rtimes\Sym(Q))$ using stable pairs.


\section{Preliminaries: moduli of stable pairs and stable toric pairs}
\label{preliminaries-stable-pairs-stable-toric-pairs}

\subsection{Stable pairs}

Our main reference is \cite{Kol13}.

\begin{definition}
Let $X$ be a variety and let $B=\sum_ib_iB_i$ be a divisor on $X$ where $b_i\in(0,1]\cap\mathbb{Q}$ and $B_i$ is a prime divisor for all $i$. Then the pair $(X,B)$ is \emph{semi-log canonical} if the following conditions are satisfied:
\begin{enumerate}
\item $X$ is $S_2$ and every codimension $1$ point is regular or a double crossing singularity (varieties satisfying these properties are also called \emph{demi-normal});
\item If $\nu\colon X^\nu\rightarrow X$ is the normalization with conductors $D\subseteq X$ and $D^\nu\subseteq X^\nu$ (see \cite[\S5.1]{Kol13}), then the support of $B$ does not contain any irreducible component of $D$;
\item $K_X+B$ is $\mathbb{Q}$-Cartier;
\item The pair $(X^\nu,D^\nu+\nu_*^{-1}B)$, where $\nu_*^{-1}B$ denotes the strict transform of $B$ under $\nu$, is \emph{log canonical} (see \cite[Definition 2.8]{Kol13}).
\end{enumerate}
\end{definition}

\begin{definition}
A pair $(X,B)$ is \emph{stable} if the following conditions are satisfied:
\begin{enumerate}
\item \emph{Singularities:} $(X,B)$ is a semi-log canonical pair;
\item \emph{Numerical:} $K_X+B$ is ample.
\end{enumerate}
\end{definition}

The following is our main example of interest.

\begin{lemma}
\label{reductiontobaseofthecover}
Let $S$ be a $D_{1,6}$-polarized Enriques surface and let $\pi\colon S\rightarrow\Bl_3\mathbb{P}^2$ be the corresponding $\mathbb{Z}_2^2$-cover ramified at $E=\sum_{i=1}^3(E_i+E_i')$ and branched along $L=\sum_{i=1}^3(\ell_i+\ell_i')$ (see Definition~\ref{definitiond16polarizedenriquessurface}). Then
\begin{equation*}
K_S+\epsilon E\sim_\mathbb{Q}\pi^*\left(K_{\Bl_3\mathbb{P}^2}+\left(\frac{1+\epsilon}{2}\right)L\right).
\end{equation*}
In particular, $(S,\epsilon E)$ is a stable pair if and only if $\left(\Bl_3\mathbb{P}^2,\left(\frac{1+\epsilon}{2}\right)L\right)$ is a stable pair.
\end{lemma}

\begin{proof}
We have that $K_S\sim\pi^*(K_{\Bl_3\mathbb{P}^2})+E$ and $2E\sim\pi^*(L)$. This implies that $K_S\sim_\mathbb{Q}\pi^*(K_{\Bl_3(\mathbb{P}^2)})+\frac{1}{2}\pi^*(L)$, and by adding $\epsilon E\sim_\mathbb{Q}\frac{\epsilon}{2}\pi^*(L)$ to both sides we obtain what claimed. For the last statement about stability, we have that $(S,\epsilon E)$ is semi-log canonical if and only if $\left(\Bl_3\mathbb{P}^2,\left(\frac{1+\epsilon}{2}\right)L\right)$ is semi-log canonical by \cite[Lemma 2.3]{AP12}, and $K_S+\epsilon E$ is ample if and only if $K_{\Bl_3\mathbb{P}^2}+\left(\frac{1+\epsilon}{2}\right)L$ is ample as $\pi$ is finite and surjective.
\end{proof}


\subsection{Moduli of stable pairs}
\label{formulationviehwegmodulistack}

In what follows we compactify $\mathbf{U}/(\mathbb{G}_m^4\rtimes\Sym(Q))$ by taking its Zariski closure inside an appropriate projective moduli space of stable pairs. Our main references are \cite{Ale15,Kol18}.

\begin{definition}
The \emph{Viehweg's moduli stack} $\overline{\mathcal{M}}_{d,N,C,\underline{b}}$ is defined as follows. Let us fix constants $d,N\in\mathbb{Z}_{>0}$, $C\in\mathbb{Q}_{>0}$, and $\underline{b}=(b_1,\ldots,b_n)$ with $b_i\in(0,1]\cap\mathbb{Q}$ and $Nb_i\in\mathbb{Z}$ for all $i=1,\ldots,n$. For any reduced scheme $S$ over $\mathbb{C}$, $\overline{\mathcal{M}}_{d,N,C,\underline{b}}(S)$ is the set of proper flat families $\mathfrak{X}\rightarrow S$ together with a divisor $\mathfrak{B}=\sum_ib_i\mathfrak{B}_i$ satisfying the following properties:
\begin{enumerate}
\item For all $i=1,\ldots,n$, $\mathfrak{B}_i$ is a codimension one closed subscheme such that $\mathfrak{B}_i\rightarrow S$ is flat at the generic points of $\mathfrak{X}_s\cap\Supp{\mathfrak{B}_i}$ for every $s\in S$;
\item Every geometric fiber $(X,B)$ is a stable pair of dimension $d$ with $(K_X+B)^d=C$;
\item There exists an invertible sheaf $\mathscr{L}$ on $\mathcal{X}$ such that for every geometric fiber $(X,B)$ one has $\mathscr{L}|_X\cong\mathcal{O}_X(N(K_X+B))$.
\end{enumerate}
\end{definition}

\begin{definition}
Consider the moduli stack $\overline{\mathcal{M}}_{6\epsilon^2}=\overline{\mathcal{M}}_{d,N,C,\underline{b}}$ with $d=2$, $\underline{b}=(b_1,b_2,b_3)=\left(\frac{1+\epsilon}{2},\frac{1+\epsilon}{2},\frac{1+\epsilon}{2}\right)$ (because we want three pairs of divisors, and we do not distinguish divisors in the same pair) where $0<\epsilon\ll1$ is a fixed rational number and $C=6\epsilon^2$. The reason for this coefficient is because if $\left(\Bl_3\mathbb{P}^2,\left(\frac{1+\epsilon}{2}\right)L\right)$ is as in Proposition~\ref{mainconstruction}, then $\left(K_{\Bl_3\mathbb{P}^2}+\left(\frac{1+\epsilon}{2}\right)L\right)^2=6\epsilon^2$. For a suitably chosen positive integer $N$ depending on $d,\underline{b}$ and $C$ (which does not need to be specified, see \cite[3.13]{Ale96}), the Viehweg's moduli functor $\overline{\mathcal{M}}_{6\epsilon^2}$ is coarsely represented by a projective scheme, which we denote by $\overline{\mathbf{M}}_{6\epsilon^2}$. This is true because we are working with surface pairs ($d=2$) and our coefficients are strictly greater than $\frac{1}{2}$ (see \cite[Theorem 1.6.1, case 2]{Ale15}).
\end{definition}

\begin{observation}
\label{s3actiononksbaspace}
The group $S_3$ has a natural action on the Viehweg moduli stack $\overline{\mathcal{M}}_{6\epsilon^2}$ given by permuting the labels of the three divisors $\mathfrak{B}_1,\mathfrak{B}_2,\mathfrak{B}_3$. In particular, we have an induced $S_3$-action on the coarse moduli space $\overline{\mathbf{M}}_{6\epsilon^2}$.
\end{observation}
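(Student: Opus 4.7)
The plan is to define the $S_3$-action on the stack by relabeling divisors, verify it preserves the moduli problem, and then push it down to the coarse moduli space via the universal property.

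Given $\sigma\in S_3$ and a family $(\mathfrak{X}\to S,\mathfrak{B}_1,\mathfrak{B}_2,\mathfrak{B}_3)\in\overline{\mathcal{M}}_{6\epsilon^2}(S)$, I set $\sigma\cdot(\mathfrak{X}\to S,\mathfrak{B}_1,\mathfrak{B}_2,\mathfrak{B}_3):=(\mathfrak{X}\to S,\mathfrak{B}_{\sigma^{-1}(1)},\mathfrak{B}_{\sigma^{-1}(2)},\mathfrak{B}_{\sigma^{-1}(3)})$. The crucial point is that all three weights coincide, $b_1=b_2=b_3=\frac{1+\epsilon}{2}$, so the total $\mathbb{Q}$-divisor $\mathfrak{B}=\sum_i b_i\mathfrak{B}_i$ is unchanged, and with it $K_{\mathfrak{X}/S}+\mathfrak{B}$ and the underlying pairs $(X,B)$ on geometric fibers. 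Consequently, the semi-log-canonicity and ampleness of $K_X+B$, the flatness of each component over $S$, the volume $(K_X+B)^2=6\epsilon^2$, and the existence of the invertible sheaf $\mathscr{L}$ with $\mathscr{L}|_X\cong\mathcal{O}_X(N(K_X+B))$ are all preserved. Hence the relabeled datum is again an object of $\overline{\mathcal{M}}_{6\epsilon^2}(S)$.

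Next I would verify naturality in $S$: pullback along $T\to S$ commutes with permuting the labels of the divisor components, because each $\mathfrak{B}_i$ is a separate closed subscheme and base change of Cartier divisors is component-wise. Composing the actions of $\tau$ and $\sigma$ reproduces the action of $\tau\sigma$, and the identity permutation acts as the identity functor, giving a group homomorphism $S_3\to\Aut(\overline{\mathcal{M}}_{6\epsilon^2})$. Finally, the universal property of the coarse moduli space $\overline{\mathcal{M}}_{6\epsilon^2}\to\overline{M}_{6\epsilon^2}$ converts each such stack automorphism into a scheme automorphism of $\overline{M}_{6\epsilon^2}$ in a functorial way, yielding the induced $S_3$-action. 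The argument is entirely formal and presents no real obstacle; the only point worth flagging is that the action on the coarse moduli space is typically not faithful, since different orderings of the same unordered triple may represent isomorphic pairs, but the statement only asserts existence of the action.
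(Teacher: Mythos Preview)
Your argument is correct and is exactly the verification the paper has in mind; in fact the paper states this as an observation without proof, treating it as self-evident that permuting the labels of the three equally-weighted divisors gives an automorphism of the moduli problem and hence of the coarse space. Your write-up simply makes explicit the routine checks (equality of weights, compatibility with base change, group law, universal property of the coarse moduli space) that the paper leaves to the reader.
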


\begin{definition}
With reference to Definition~\ref{familyD16polarizedEnriquessurfaces}, let $\mathcal{B}'$ be the closed subscheme of $\mathbf{U}\times(\mathbb{P}^1)^3$ given by the vanishing of the equation
\[
\sum_{i,j,k=0,1}c_{i,j,k}X_iY_jZ_k=0.
\]
Let $\mathcal{L}'$ be the restriction to $\mathcal{B}'$ of the toric boundary of $\mathbb{G}_m^8\times(\mathbb{P}^1)^3$. Then the family of stable pairs $\left(\mathcal{B}',\left(\frac{1+\epsilon}{2}\right)\mathcal{L}'\right)\rightarrow\mathbf{U}$ descends to a family of stable pairs $\left(\mathcal{B},\left(\frac{1+\epsilon}{2}\right)\mathcal{L}\right)\rightarrow\mathbf{U}/(\mathbb{G}_m^4\rtimes\Sym(Q))$.

Then there is an induced morphism $f\colon\mathbf{U}/(\mathbb{G}_m^4\rtimes\Sym(Q))\rightarrow\overline{\mathbf{M}}_{6\epsilon^2}/S_3$ which is injective by Proposition~\ref{mainconstruction}. We define $\overline{\mathbf{M}}$ to be the normalization of the closure of the image of $f$. We refer to $\overline{\mathbf{M}}$ as the \emph{KSBA compactification} as the moduli space of $D_{1,6}$-polarized Enriques surfaces.
\end{definition}

\begin{remark}
\label{equivalenttoconsiderthebaseofthecover}
With the notation introduced in Lemma~\ref{reductiontobaseofthecover}, we claim that studying the degenerations of the stable pairs $(S,\epsilon E)$ is equivalent to studying of the degenerations of the stable pairs $\left(\Bl_3\mathbb{P}^2,\left(\frac{1+\epsilon}{2}\right)L\right)$. To prove this, let $K$ be the field of fractions of a DVR $(A,\mathfrak{m})$, where $\mathfrak{m}$ is the maximal ideal of $A$. Let $(\mathfrak{S}^\circ,\mathcal{E}^\circ)$ (resp. $(\mathfrak{B}^\circ,\mathcal{L}^\circ)$) be a family of stable pairs over $\Spec(K)$ with fibers isomorphic to $(S,\epsilon E)$ (resp. $\left(\Bl_3\mathbb{P}^2,\left(\frac{1+\epsilon}{2}\right)L\right)$). Let $\mathfrak{S}^\circ\rightarrow\mathfrak{B}^\circ$ be the appropriate $\mathbb{Z}_2^2$-cover ramified at $\mathcal{E}^\circ$ and branched along $\mathcal{L}^\circ$.

Let $(\mathfrak{S},\mathcal{E})$ be the completion of $(\mathfrak{S}^\circ,\mathcal{E}^\circ)$ to a family of stable pairs over $\Spec(A)$, or a ramified base change of it (see \cite[Theorem 2.1]{Ale06}). Denote by $(\mathfrak{S}_\mathfrak{m},\mathcal{E}_\mathfrak{m})$ the central fiber of $(\mathfrak{S},\mathcal{E})$. Similarly, define $(\mathfrak{B},\mathcal{L})$ and $(\mathfrak{B}_\mathfrak{m},\mathcal{L}_\mathfrak{m})$ for $(\mathfrak{B}^\circ,\mathcal{L}^\circ)$. We want to show that $(\mathfrak{S}_\mathfrak{m},\mathcal{E}_\mathfrak{m})$ is a $\mathbb{Z}_2^2$-cover of $(\mathfrak{B}_\mathfrak{m},\mathcal{L}_\mathfrak{m})$. This is automatic if we can show that the $\mathbb{Z}_2^2$-action on $\mathfrak{S}^\circ$ extends to $\mathfrak{S}$, because the quotient of $\mathfrak{S}$ by $\mathbb{Z}_2^2$ is isomorphic to $\mathfrak{B}$ by the uniqueness of the completion of $\mathfrak{B}^\circ$ over $\Spec(A)$.

Fix any $g\in\mathbb{Z}_2^2$ and consider the corresponding action $\alpha_g\colon\mathfrak{S}\dashrightarrow\mathfrak{S}$. If we resolve the indeterminacies

\begin{center}
\begin{tikzpicture}[>=angle 90]
\matrix(a)[matrix of math nodes,
row sep=2em, column sep=2em,
text height=1.5ex, text depth=0.25ex]
{&\mathfrak{S}'&\\
\mathfrak{S}&&\mathfrak{S},\\};
\path[->] (a-1-2) edge node[right]{}(a-2-1);
\path[->] (a-1-2) edge node[above right]{$\alpha_g'$}(a-2-3);
\path[dashed,->] (a-2-1) edge node[below]{$\alpha_g$}(a-2-3);
\end{tikzpicture}
\end{center}
then $\alpha_g'$ corresponds to a morphism $\overline{\alpha}_g$ from the log canonical model of $(\mathfrak{S}',\mathcal{E}')$ to $\mathfrak{S}$ (\cite[Definition 1.19]{Kol13}). But the log canonical model of $(\mathfrak{S}',\mathcal{E}')$ is $(\mathfrak{S},\mathcal{E})$, so $\overline{\alpha}_g\colon\mathfrak{S}\rightarrow\mathfrak{S}$ is the desired extension of $\alpha_g$.
\end{remark}

To study $\overline{\mathbf{M}}$ we apply techniques from \cite{Ale02}, which we now recall.


\subsection{Stable toric pairs}
\label{stabletoricpairs}
Fix a torus $T$ over $\mathbb{C}$ and let $M$ be its character lattice. Let $M_\mathbb{R}$ denote the tensor product $M\otimes_\mathbb{Z}\mathbb{R}$. Although our main interest is over $\mathbb{C}$, note that the theory of stable toric pairs we are about to review works in any characteristic.

Let $X$ be a variety with $T$-action. We say that $X$ is a \emph{stable toric variety} if $X$ is seminormal and its irreducible components are toric varieties under the $T$-action. The \emph{toric boundary} of a stable toric variety is defined to be the sum of the boundary divisors of each irreducible component which are not in common with other irreducible components. If $X$ is projective and $\mathscr{L}$ is an ample and $T$-linearized invertible sheaf on $X$, we say that the pair $(X,\mathscr{L})$ is a \emph{polarized} stable toric variety.

Assume that we have a polarized stable toric variety $(X,\mathscr{L})$. Then we can associate to each irreducible component $X_i$ of $X$ a lattice polytope $P_i$. These polytopes can be glued together in the same way as $X$ is the union of its irreducible components. This results into a topological space $\cup_iP_i$ which is called the \emph{topological type} of $X$. The topological type comes together with a finite map $\cup_iP_i\rightarrow M_\mathbb{R}$, called the \emph{reference map}, which embeds each $P_i$ as a lattice polytope in $M_\mathbb{R}$. The set of faces of the polytopes $P_i$, together with the identifications coming from the gluing, is a \emph{complex of polytopes}. Up to isomorphism, there is a $1$-to-$1$ correspondence between polarized stable toric varieties (for a fixed torus $T$) and the following data:
\begin{enumerate}
\item A complex of polytopes $\mathcal{P}$;
\item A reference map $\cup_{P\in\mathcal{P}}P\rightarrow M_\mathbb{R}$;
\item An element of the cohomology group we are about to describe. For each $P\in\mathcal{P}$, let $C_P$ be the saturated sublattice of $\mathbb{Z}\oplus M$ generated by $(1,P)$, and let $T_P$ be the torus $\Hom(C_P,\mathbb{C}^*)$. The tori $T_P$ for $P\in\mathcal{P}$ define a sheaf of abelian groups $\underline{T}$ on the poset $\mathcal{P}$ with the order topology. An element of $H^1(\mathcal{P},\underline{T})$ describes the way the irreducible components of $X$ are glued together.
\end{enumerate}
For more details see \cite[\S4.3]{Ale06} and \cite[Theorem 1.2.6]{Ale02}.

\begin{definition}
Let $(X,\mathscr{L})$ be a polarized stable toric variety and let $Q\subseteq M_\mathbb{R}$ be a lattice polytope. We say that $X$ has \emph{type $Q$} if the complex of polytopes $\mathcal{P}$ associated to $X$ is a polyhedral subdivision of the marked polytope $(Q,Q\cap M)$. In this case the reference map is given by the inclusion of $Q$ in $M_\mathbb{R}$. Furthermore, the toric boundary of $X$ is the sum of the divisors corresponding to the facets in $\mathcal{P}$ contained in the boundary of $Q$.
\end{definition}

\begin{definition}
\label{definitionofstabletoricpair}
A \emph{stable toric pair} is a polarized stable toric variety $(X,\mathscr{L})$ together with an effective Cartier divisor $B$ which is the divisor of zeros of a global section of $\mathscr{L}$. Also, we require that $B$ does not contain any torus fixed point (or equivalently any $T$-orbit). We denote such stable toric pair simply by $(X,B)$ as $\mathscr{L}\cong\mathcal{O}_X(B)$. Two stable toric pairs $(X,B)$ and $(X',B')$ are isomorphic if there exists an isomorphism $f\colon X\rightarrow X'$ that preserves the $T$-action and such that $f^*B'=B$. We say that a stable toric pair has \emph{type} $Q$ if the corresponding polarized stable toric variety has type $Q$.
\end{definition}

\begin{remark}
\label{combinatorialdescriptionstabletoricpair}
Let $(X,B)$ be a stable toric pair and let $\mathcal{P}$ be the complex of polytopes corresponding to the polarized stable toric variety $(X,\mathcal{O}_X(B))$. If $X_i$ is an irreducible component of $X$, then the restriction $B|_{X_i}$ can be described combinatorially as follows (see \cite[Theorem 1.2.7]{Ale02} in combination with \cite[Lemma 2.2.7, part 2]{Ale02}). Consider the marking on the corresponding lattice polytope $P_i$ given by $P_i\cap M$. The lattice points in $P_i$ correspond to monomials as $M$ is the character lattice of the torus $T$. Now, $B|_{X_i}$ is determined (not uniquely) by a function $f\colon P_i\cap M\rightarrow\mathbb{C}^*$, which assigns to each monomial a corresponding coefficient, which we want to be nonzero because $B$ does not contain torus fixed points.
\end{remark}

\begin{example}
\label{stabletoricpairofinterest}
Let $B\subseteq(\mathbb{P}^1)^3$ be a smooth divisor of class $(1,1,1)$ not containing torus fixed points. Then $((\mathbb{P}^1)^3,B)$ is a stable toric pair of type $Q$, where $Q$ denotes the unit cube. Recall that an appropriate $\mathbb{Z}_2^2$-cover of $B$ branched along $\Delta|_B$ gives a $D_{1,6}$-polarized Enriques surface.
\end{example}


\subsection{The stack of stable toric pairs}

Given a locally noetherian scheme $S$ over $\mathbb{C}$, a \emph{family of stable toric pairs} $(\mathfrak{X},\mathfrak{B})\rightarrow S$ is a proper flat morphism of schemes $\pi\colon\mathfrak{X}\rightarrow S$ together with a compatible $T_S=(T\times_\mathbb{C}S)$-action on $\mathfrak{X}$ and an effective Cartier divisor $\mathfrak{B}\subseteq\mathfrak{X}$ such that $\pi|_\mathfrak{B}$ is flat and the fiber $(\mathfrak{X}_s,\mathfrak{B}_s)$ over every geometric point $s\rightarrow S$ is a stable toric pair with the action induced by $T_S$. Two families of stable toric pairs over the same base are \emph{isomorphic} if there exists an isomorphism of pairs over $S$ preserving the torus action. Given a lattice polytope $Q$, we say that a family of stable toric pairs has \emph{type $Q$} if every geometric fiber has type $Q$.

\begin{remark}
\label{algebraicdatafamilyofstabletoricpairs}
Let $(\mathfrak{X},\mathfrak{B})\rightarrow S$ be a family of stable toric pairs. Denote by $\pi$ the morphism $\mathfrak{X}\rightarrow S$ and let $\mathscr{L}=\mathcal{O}_\mathfrak{X}(\mathfrak{B})$. Following \cite[Proof of Lemma 2.10.1]{Ale02}, for $d\geq0$ the sheaves $\pi_*\mathscr{L}^{\otimes d}$ are locally free, and the torus action gives a decomposition $\pi_*\mathscr{L}^{\otimes d}=\bigoplus_{m\in M}R_{(d,m)}$ into sheaves that are also locally free. This results into a locally free $(\mathbb{Z}\oplus M)$-graded $\mathcal{O}_S$-algebra $R=\bigoplus_{(d,m)\in\mathbb{Z}\oplus M}R_{(d,m)}$ together with a section $\theta$ of $R$ of $\mathbb{Z}$-degree $1$ corresponding to $\mathfrak{B}$. Conversely, a pair $(R,\theta)$, where $R$ is a locally free graded $\mathcal{O}_S$-algebra and $\theta$ a degree $1$ section, uniquely determines a family of stable toric pairs up to isomorphism (see \cite[Proof of Theorem 2.10.8]{Ale02} and \cite[\S4]{Ale06}).
\end{remark}

Given a lattice polytope $Q$, we have a notion of stack $\mathscr{M}_Q$ over $\mathbb{C}$, whose objects are families of stable toric pairs of type $Q$. The following theorem is due to Alexeev.

\begin{theorem}[{\cite[Theorem 1.2.15]{Ale02}}]
\label{modulistabletoricpairs}
Let $Q$ be a lattice polytope and let $\mathscr{M}_Q$ be the stack of stable toric pairs of type $Q$. Then the following hold:
\begin{enumerate}
\item $\mathscr{M}_Q$ is a proper Deligne--Mumford stack with finite stabilizers;
\item $\mathscr{M}_Q$ has a coarse moduli space $\overline{\mathbf{M}}_Q'$ which is a projective scheme;
\item The boundary of $\overline{\mathbf{M}}_{Q}'$ is stratified according to the polyhedral subdivision of $(Q,Q\cap M)$;
\item The normalization $\overline{\mathbf{M}}_Q$ of the main irreducible component of $(\overline{\mathbf{M}}_Q')_{\red}$ is isomorphic to the toric variety associated to the secondary polytope $\Sigma(Q\cap M)$.
\end{enumerate}
\end{theorem}

\begin{remark}
\label{irreducibilitycoarsemoduliofstabletoricpairs}
Let $Q$ be the unit cube. The moduli space of stable toric pairs $\overline{\mathbf{M}}_Q$ parametrizes in a dense open subset the stable toric pairs $((\mathbb{P}^1)^3,B)$ in Example~\ref{stabletoricpairofinterest}. In this particular case, the moduli space $\overline{\mathbf{M}}_Q$ is irriducible because, if $\mathcal{P}$ is any polyhedral subdivisions of $(Q,Q\cap\mathbb{Z}^3)$, then $\mathcal{P}$ is regular and $H^1(\mathcal{P},\underline{T})=\{1\}$ (this cohomology group was introduced earlier in \S\ref{stabletoricpairs}). To prove the former, we know from \cite[Corollary 2.9]{San01} that a marked polytope with a nonregular subdivision has a nonregular triangulation. But all the triangulations of $(Q,Q\cap\mathbb{Z}^3)$ are regular by \cite[Theorem 3.2]{DL96}. For the proof that $H^1(\mathcal{P},\underline{T})=\{1\}$, we refer to \cite[\S6.3]{Sch17}.

Since $\overline{\mathbf{M}}_Q$ is the toric variety associated to the secondary polytope $\Sigma(Q\cap\mathbb{Z}^3)$, we also know that $\dim(\overline{\mathbf{M}}_Q)=\#(Q\cap\mathbb{Z}^3)-\dim(Q)-1=4$ \cite[Chapter 7, Theorem 1.7]{GKZ08}.
\end{remark}

\begin{proposition}
\label{rationalmapfromstptoksba}
Let $Q$ be the unit cube. Then there exists a rational map $\overline{\mathbf{M}}_Q\dashrightarrow\overline{\mathbf{M}}$ whose fibers are $\Sym(Q)$-orbits.
\end{proposition}

\begin{proof}
Let $\mathcal{P}$ be the complex of polytopes given by $Q$ and its faces. Let $C$ be the set of vertices of $Q$. Following \cite[Definition 2.6.6]{Ale02}, let $\MP^{\fr}[\mathcal{P},C](\mathbb{C})$ be the groupoid of stable toric pairs $((\mathbb{P}^1)^3,B)$ over $\mathbb{C}$ with the linearized line bundle $\mathcal{O}_{(\mathbb{P}^1)^3}(B)$ in which the arrows are the isomorphisms identical on the torus $T$. By \cite[Lemma 2.6.7]{Ale02}, we have that $\MP^{\fr}[\mathcal{P},C](\mathbb{C})$ is equivalent to the quotient stack $[\mathbb{G}_m^8/\mathbb{G}_m^4]$, where $\mathbb{G}_m^8$ represents the space of coefficients of the divisor $B=V\left(\sum_{i,j,k=0,1}c_{ijk}X_iY_jZ_k\right)$ on $(\mathbb{P}^1)^3$ and $(\lambda,\mu_1,\mu_2,\mu_3)\in\mathbb{G}_m^4$ acts on $\mathbb{G}_m^8$ as follows:
\begin{equation*}
(\lambda,\mu_1,\mu_2,\mu_3)\cdot(\ldots,c_{ijk},\ldots)=(\ldots,\lambda\mu_1^i\mu_2^j\mu_3^kc_{ijk},\ldots).
\end{equation*}
Observe that the quotient $\mathbb{G}_m^8/\mathbb{G}_m^4$ exists as a scheme and it is isomorphic to $\mathbb{G}_m^4$. The stabilizers of the action $\mathbb{G}_m^4\curvearrowright\mathbb{G}_m^8$ are trivial. It follows that the quotient stack $[\mathbb{G}_m^8/\mathbb{G}_m^4]$ is finely represented by $\mathbb{G}_m^8/\mathbb{G}_m^4$. This gives a dense open subset $\mathbf{A}$ of $\overline{\mathbf{M}}_Q$, namely the maximal subtorus $\mathbb{G}_m^4$, with a universal family which can be realized as the quotient of $(\mathbb{G}_m^8\times(\mathbb{P}^1)^3,V(\sum_{i,j,k=0,1}c_{ijk}X_iY_jZ_k))$ by the $\mathbb{G}_m^4$-action on the coefficients $c_{ijk}$. Therefore, we have an induced morphism $\overline{\mathbf{M}}_Q\dashrightarrow\overline{\mathbf{M}}$ defined on $\mathbf{A}$.
\end{proof}


\section{Stable replacement in one-parameter families}
\label{modificationsofthepairs}

To realize the stable pair compactification $\overline{\mathbf{M}}$ as a finite quotient of the toric variety $\overline{\mathbf{M}}_Q$, we first want to show that the rational map $\varphi\colon\overline{\mathbf{M}}_Q\dashrightarrow\overline{\mathbf{M}}$ from Proposition~\ref{rationalmapfromstptoksba} is defined over $\overline{\mathbf{M}}_Q$. To achieve this we use \cite[Theorem~7.3]{GG14}: let $x\in\overline{\mathbf{M}}_Q$ and let $(A,\mathfrak{m})$ be a DVR with field of fractions $K$. Let $g\colon\Spec(K)\rightarrow\overline{\mathbf{M}}_Q$ with image in the torus of $\overline{\mathbf{M}}_Q$ such that, if $\overline{g}\colon\Spec(A)\rightarrow\overline{\mathbf{M}}_Q$ is the unique extension of $g$ by the valuative criterion of properness, then $x=\overline{g}(\mathfrak{m})$. Likewise, $\varphi\circ g$ has a unique extension $\overline{\varphi\circ g}$ as an $A$-point of $\overline{\mathbf{M}}$. We want to show that $\overline{\varphi\circ g}(\mathfrak{m})$ only depends on $x$, and that it is independent from the choice of $g$. To prove this, we need (1) an explicit description of one-parameter families of stable toric pairs, and (2) to compute the stable pair parametrized by $\overline{\varphi\circ g}(\mathfrak{m})\in\overline{\mathbf{M}}$. Part (1) is contained in \cite{Ale02}, and we recall it in \S\ref{familiesofstabletoricpairsoveradvr}. The work in the current section and \S\ref{wallcrossingstudy} addresses (2), which is not automatic from knowing the limiting stable toric pair. We will need a modification: namely, remove what we call \emph{corner cuts}, see \S\ref{subsection-on-corner-cuts}.


\subsection{Families of stable toric pairs $(\mathfrak{X},\mathfrak{B})$ over a DVR}
\label{familiesofstabletoricpairsoveradvr}
Let $(A,\mathfrak{m})$ be a DVR with field of fractions $K$, uniformizing parameter $t$, and residue field our fixed base field $\mathbb{C}$. Let $T$ be a torus over $\mathbb{C}$ with character lattice $M$. Let $Q\subseteq M_\mathbb{R}$ be a lattice polytope. Define
\begin{equation*}
\theta=\sum_{m\in Q\cap M}c_m(t)t^{h(m)}x^{(1,m)},
\end{equation*}
where, for all $m\in Q\cap M$, $c_m(t)\in A$, $c_m(0)\in\mathbb{C}^*$, and $h(m)\in\mathbb{Z}$. Observe that the map $m\mapsto h(m)$ gives a height function $h\colon Q\cap M\rightarrow\mathbb{Z}$. Let $Q^+\subseteq M_\mathbb{R}\oplus\mathbb{R}$ be the convex hull of the half-lines $(m,h(m)+\mathbb{R}_{\geq0})$, $m\in Q\cap M$, and let $\Cone(Q^+)\subseteq\mathbb{R}\oplus M_\mathbb{R}\oplus\mathbb{R}$ be the cone over $(1,Q^+)$ with vertex at the origin. Then $h$ defines the following $(\mathbb{Z}\oplus M)$-graded $A$-algebra:
\begin{equation*}
R=A[t^\ell x^{(d,m)}\mid(d,m,\ell)\in\Cone(Q^+)\cap(\mathbb{Z}\oplus M\oplus\mathbb{Z})].
\end{equation*}
Observe that $\theta\in R$ is an element of $\mathbb{Z}$-degree $1$. Let $(\mathfrak{X},\mathfrak{B})\rightarrow\Spec(A)$ be the family of stable toric pairs associated to $(R,\theta)$ (see Remark~\ref{algebraicdatafamilyofstabletoricpairs}). If $\eta$ is the generic point of $\Spec(A)$, then $\mathfrak{X}_\eta=Y\times\Spec(K)$, where $Y$ is the toric variety associated to the polytope $Q$. The central fiber $(\mathfrak{X}_\mathfrak{m},\mathfrak{B}_\mathfrak{m})$ is a stable toric pair whose corresponding polyhedral subdivision of $(Q,Q\cap M)$ is induced by the height function $h$, and hence it is a regular subdivision. The equation of $\mathfrak{B}_\mathfrak{m}$ is given by
\begin{equation*}
\sum_{m\in Q\cap M}c_m(0)x^{(1,m)}=0.
\end{equation*}
For more details about this construction we refer to \cite[\S2.8]{Ale02}.


\subsection{Corner cuts}
\label{subsection-on-corner-cuts}
\begin{definition}
\label{definitionofcornercut}
Let $Q$ be the unit cube. We call \emph{corner cut} a subpolytope of $Q$ which is equal to the convex hull of the points $(0,0,0),(1,0,0),(0,1,0),(0,0,1)$ up to a symmetry of $Q$ (see Figure~\ref{modificationsubdivision} on the left). We call \emph{apex} the vertex of the corner cut which is at the intersection of three edges of the cube.
\end{definition}

\begin{notation}
Let $\mathcal{P}$ be a polyhedral subdivision of a lattice polytope $Q$. We denote by $\mathcal{P}_i$ the set of $i$-dimensional faces in $\mathcal{P}$.
\end{notation}

\begin{definition}
\label{polyhedralsubdivisionpdot}
Let $\mathcal{P}$ be a polyhedral subdivision of $(Q,Q\cap\mathbb{Z}^3)$. We define a polyhedral subdivision $\mathcal{P}^\bullet$ of $(Q,Q\cap\mathbb{Z}^3)$ via the following algorithm:
\begin{itemize}
\item[(1)] $\mathcal{R}=\mathcal{P}$;
\item[(2)] If $\mathcal{R}$ contains no corner cut, define $\mathcal{P}^\bullet=\mathcal{R}$ and stop. Otherwise, go to step (3);
\item[(3)] Let $P\in\mathcal{R}$ be a corner cut and let $R\in\mathcal{R}$ be that unique polytope sharing exactly a facet with $P$. Define $\mathcal{S}=(\mathcal{R}_3\setminus\{P,R\})\cup\{P\cup R\}$. Then redefine $\mathcal{R}$ to be the polyhedral subdivision of $Q$ generated by $\mathcal{S}$. Go to step (2).
\end{itemize}
In Figure~\ref{modificationsubdivision} we give an explicit example of $\mathcal{P}^\bullet$ given $\mathcal{P}$.
\end{definition}

\begin{figure}
\centering
\includegraphics[scale=0.45,valign=t]{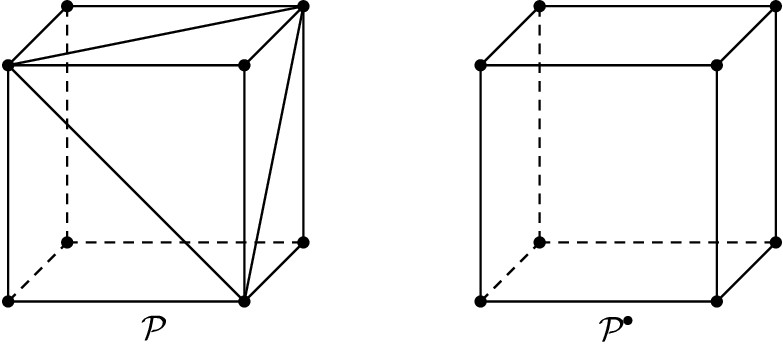}
\caption{Modification $\mathcal{P}^\bullet$ of the polyhedral subdivision $\mathcal{P}$.}
\label{modificationsubdivision}
\end{figure}


\subsection{The modified family $(\mathfrak{X}^\bullet,\mathfrak{B}^\bullet)$}
\label{themodifiedfamilywithbullet}

Let $Q$ be the unit cube and let $(\mathfrak{X},\mathfrak{B})$ be a family of stable toric pairs of type $Q$ over $\Spec(A)$. Define $X=\mathfrak{X}_\mathfrak{m}$, $B=\mathfrak{B}_\mathfrak{m}$, and assume that the fiber of $\mathfrak{X}$ over the generic point is isomorphic to $(\mathbb{P}^1)_K^3$, where recall $K$ is the field of fractions of $A$. Observe that the pair $\left(B,\left(\frac{1+\epsilon}{2}\right)\Delta|_B\right)$ may be not stable: first of all, the restriction $\Delta|_B$ can be defined if $\Delta$ is $\mathbb{Q}$-Cartier (see Proposition~\ref{toricboundarycartier}). Therefore, we want to define a new family $(\mathfrak{X}^\bullet,\mathfrak{B}^\bullet)$ which is isomorphic to the original one in the complement of the central fiber, and such that the new central fiber $(X^\bullet,B^\bullet)$ satisfies that $\left(B^\bullet,\left(\frac{1+\epsilon}{2}\right)\Delta^\bullet|_{B^\bullet}\right)$ is a stable pair, where $\Delta^\bullet$ denotes the toric boundary of $X^\bullet$. The stability of the pair $\left(B^\bullet,\left(\frac{1+\epsilon}{2}\right)\Delta^\bullet|_{B^\bullet}\right)$ is proved in \S\ref{wallcrossingstudy}.

To construct $(\mathfrak{X}^\bullet,\mathfrak{B}^\bullet)$ we define another $(\mathbb{Z}\oplus M)$-graded $A$-algebra induced by $\theta$ as follows. Denote by $\mathcal{P}$ the regular polyhedral subdivision of $(Q,Q\cap\mathbb{Z}^3)$ associated to $(\mathfrak{X}_\mathfrak{m},\mathfrak{B}_\mathfrak{m})$. Assume that $P\in\mathcal{P}_3$ is a corner cut and let $P'\in\mathcal{P}$ be that unique polytope sharing exactly one facet with $P$. Denote by $L$ the unique hyperplane in $\mathbb{R}^3\oplus\mathbb{R}$ containing the points $(m,h(m))$ for $m\in P'\cap\mathbb{Z}^3$. If $m$ is the apex of the corner cut $P$, then there exists a unique positive rational number $q_m$ such that $(m,h(m)-q_m)\in L$. Moreover, up to a finite ramified base change, we can assume that $q_m$ is integral. Let us consider the height function
\begin{displaymath}
h^\bullet(m)=\left\{ \begin{array}{ll}
h(m)-q_m&\textrm{if $m$ is the apex of a corner cut},\\
h(m)&\textrm{otherwise}.
\end{array} \right.
\end{displaymath}
Define a new $(\mathbb{Z}\oplus M)$-graded $A$-algebra $R^\bullet$ as we did in \S\ref{familiesofstabletoricpairsoveradvr}, but using $h^\bullet$ in place of $h$. Observe that $R\subseteq R^\bullet$ is a degree preserving embedding of graded algebras. Therefore $\theta\in R^\bullet$ is an element of $\mathbb{Z}$-degree $1$ and the pair $(R^\bullet,\theta)$ corresponds to a family $\mathfrak{X}^\bullet\rightarrow\Spec(A)$ of stable toric varieties together with a Cartier divisor $\mathfrak{B}^\bullet\subseteq\mathfrak{X}^\bullet$ given by the vanishing of $\theta$. Observe that $(\mathfrak{X}_\eta,\mathfrak{B}_\eta)$ and $((\mathfrak{X}^\bullet)_\eta,(\mathfrak{B}^\bullet)_\eta)$ are isomorphic over $\Spec(K)$ by construction. The equation of $(\mathfrak{B}^\bullet)_\mathfrak{m}$ is given by
\begin{equation*}
\sum_{\substack{m\in Q\cap M,\\m~\textrm{is not an apex}}}c_m(0)x^{(1,m)}=0.
\end{equation*}
So, in other words, we set equal to zero the coefficients of a monomials if it correspond to the apex of a corner cut in $\mathcal{P}$. For this reason, the central fiber $((\mathfrak{X}^\bullet)_\mathfrak{m},(\mathfrak{B}^\bullet)_\mathfrak{m})$ is not a stable toric pair if and only if $\mathcal{P}$ contains a corner cut. On the other hand, $((\mathfrak{X}^\bullet)_\mathfrak{m},\mathcal{O}((\mathfrak{B}^\bullet)_\mathfrak{m}))$ is always a polarized stable toric variety whose corresponding polyhedral subdivision of $(Q,Q\cap\mathbb{Z}^3)$ is $\mathcal{P}^\bullet$ (see Definition~\ref{polyhedralsubdivisionpdot}). Finally, observe that if $\mathcal{P}$ has no corner cuts, then $(\mathfrak{X},\mathfrak{B})=(\mathfrak{X}^\bullet,\mathfrak{B}^\bullet)$.

\begin{remark}
\label{independentfromthechoiceofthefamily}
With the same notation introduced above, denote by $(X,B)$ (resp. $(X^\bullet,B^\bullet)$) the central fiber of $(\mathfrak{X},\mathfrak{B})$ (resp. $(\mathfrak{X}^\bullet,\mathfrak{B}^\bullet)$). Then $(X^\bullet,B^\bullet)$ only depends on $(X,B)$ and it is independent from the whole family $(\mathfrak{X},\mathfrak{B})\rightarrow\Spec(A)$.
\end{remark}

\begin{definition}
\label{modificationcentralfiberbullet}
Let $(X,B)$ be a stable toric pair of type $Q$. Define $(X^\bullet,B^\bullet)$ to be the central fiber of $(\mathfrak{X}^\bullet,\mathfrak{B}^\bullet)$, where $(\mathfrak{X},\mathfrak{B})\rightarrow\Spec(A)$ is a one-parameter family of stable toric pairs with central fiber $(X,B)$ and smooth generic fiber (such a family exists because $\overline{\mathbf{M}}_Q$ is irreducible, hence $(X,B)$ is smoothable). The pair $(X^\bullet,B^\bullet)$ is well defined by Remark~\ref{independentfromthechoiceofthefamily}. Observe that, if $\mathcal{P}$ has no corner cuts, then $(X,B)=(X^\bullet,B^\bullet)$.
\end{definition}


\section{Analysis of stability}
\label{wallcrossingstudy}

In the current section we continue the strategy outlined at the beginning of \S\ref{modificationsofthepairs}. More precisely, with the same notation of \S\ref{themodifiedfamilywithbullet}, we show that $(B^\bullet,\left(\frac{1+\epsilon}{2}\right)\Delta^\bullet|_{B_P^\bullet})$ is a stable pair.

\subsection{Preliminaries}

\begin{notation}
Consider a stable toric pair $(X,B)$ of type $Q$ and let $\mathcal{P}$ be the corresponding polyhedral subdivision of $(Q,Q\cap\mathbb{Z}^3)$. Let $\amalg_{P\in\mathcal{P}_3}X_P\rightarrow X$ be the normalization of $X$, where $X_P$ is the toric variety corresponding to the polytope $P$. Then we denote by $\Delta_P$ the toric boundary of $X_P$, by $D_P\subseteq X_P$ the conductor divisor, and by $B_P$ the restriction to $X_P$ of the preimage of $B$ under the normalization morphism. Define $X_P^\bullet,\Delta_P^\bullet,D_P^\bullet,B_P^\bullet$ analogously with $(X^\bullet,B^\bullet)$ instead of $(X,B)$.
\end{notation}

\begin{proposition}
\label{toricboundarycartier}
Let $(X,B)$ be a stable toric pair of type $Q$ and let $\mathcal{P}$ be the associated polyhedral subdivision of $(Q,Q\cap\mathbb{Z}^3)$. If $\mathcal{P}$ does not contain a corner cut, then $\Delta$ is Cartier. If $\mathcal{P}$ contains a corner cut, then $\Delta$ is not $\mathbb{Q}$-Cartier.
\end{proposition}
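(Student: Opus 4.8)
The plan is to reduce everything to a local computation at the torus-fixed points and then run a short classification of facet normals that is made tractable by the fact that $Q\cap\mathbb{Z}^3=\{0,1\}^3$ consists of the eight vertices only, so every cell $P\in\mathcal{P}_3$ and every vertex of $\mathcal{P}$ has coordinates in $\{0,1\}$. The conceptual starting point is the log Calabi--Yau relation: on each normalized component $X_P$ one has $\omega_{X_P}=\mathcal{O}(-\sum_{\text{all facets }F\text{ of }P}D_F)$, and adjunction along the conductor gives $\omega_X|_{X_P}=\omega_{X_P}(D_P)=\mathcal{O}(-\Delta|_{X_P})$, so that $K_X+\Delta\sim 0$. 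Hence $\Delta$ is Cartier (resp. $\mathbb{Q}$-Cartier) if and only if $X$ is Gorenstein (resp. $\mathbb{Q}$-Gorenstein), and this is a condition at the fixed points $x_v$, $v$ a vertex of $\mathcal{P}$. Concretely, writing $n_{F}$ for the primitive inner normal of a facet $F\ni v$ of a cell $P\ni v$, the divisor $\Delta$ is $\mathbb{Q}$-Cartier at $x_v$ exactly when there is a single $m_v\in M_\mathbb{Q}$ (integral, for Cartier) with $\langle m_v,n_F\rangle=-1$ for every facet $F\ni v$ lying on $\partial Q$ and $\langle m_v,n_F\rangle=0$ for every conductor facet $F\ni v$; the character $\chi^{m_v}$ is then a local equation for $\Delta$ that glues across the conductor because its restrictions to the two branches agree there.

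The key observation I would make next is that the three cube faces through a corner $v\in\{0,1\}^3$ already pin down $m_v$ uniquely and integrally: each such face has inner normal $\pm e_i$ (with sign $+$ if $v_i=0$ and $-$ if $v_i=1$), and since $v$ is a vertex of the induced subdivision of each of these three square faces it lies on a $\partial Q$-facet supported there, forcing $m_v=(2v_i-1)_{i=1,2,3}$ regardless of the subdivision. The entire question therefore collapses to whether the remaining (conductor) facets through $v$ satisfy $\langle m_v,n_F\rangle=0$. So I would classify the primitive normals of interior $2$-faces with vertices in $\{0,1\}^3$: taking cross products of difference vectors in $\{-1,0,1\}^3$ one checks these are, up to sign, the axis vectors $e_i$, the face diagonals $e_i\pm e_j$, and the space diagonals $(\pm1,\pm1,\pm1)$. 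No axis-parallel interior facet exists (there are no lattice points in the relative interior of a cube face), the face-diagonal normals are orthogonal to the relevant $m_v$ (for $e_i+e_j$ the vertices of such a facet satisfy $v_i+v_j=1$, giving $\langle m_v,e_i+e_j\rangle=2(v_i+v_j)-2=0$, and similarly $v_i=v_j$ for $e_i-e_j$), while the space-diagonal normals give $\langle m_v,(\pm1,\pm1,\pm1)\rangle=\pm\sum(\pm(2v_i-1))$, an odd integer and hence nonzero.

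The final step is to identify space-diagonal conductor facets with corner cuts (Definition~\ref{definitionofcornercut}). A conductor facet with normal $(1,1,1)$ lies in a plane $x+y+z=1$ (the other levels give no $2$-face or are symmetric), so it is the triangle $\mathrm{conv}\{e_1,e_2,e_3\}$; the region it bounds toward the origin is the simplex $\mathrm{conv}\{0,e_1,e_2,e_3\}$, which contains no further lattice points and therefore must be a single cell, i.e. a corner cut, and conversely every corner cut produces such a facet. Putting this together: if $\mathcal{P}$ has no corner cut then all conductor facets are face-diagonal, so $m_v=(2v_i-1)_i$ solves the system integrally at every vertex and $\Delta$ is Cartier; if $\mathcal{P}$ has a corner cut, then at the cube corner $e_1$ the forced $m_{e_1}=(1,-1,-1)$ satisfies $\langle m_{e_1},(1,1,1)\rangle=-1\neq0$ against the hypotenuse conductor facet, so the system is inconsistent over $\mathbb{Q}$ and $\Delta$ is not $\mathbb{Q}$-Cartier (this can also be seen by pulling back to any component carrying that facet, since the pullback of a $\mathbb{Q}$-Cartier divisor along the normalization is $\mathbb{Q}$-Cartier).

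I expect the main obstacle to be making the gluing precise, namely justifying rigorously that Cartierness of $\Delta$ at $x_v$ on the seminormal, reducible variety $X$ is controlled by a \emph{single} character $m_v$ common to all cells through $v$, rather than by componentwise Cartierness alone; this is exactly where the corner cut fails, even though the offending component $X_P\cong\mathbb{P}^3$ is itself smooth. I would treat this via the relation $\omega_X|_{X_P}=\omega_{X_P}(D_P)$ together with the standard description of line bundles on demi-normal toric gluings, reducing to the descent of $\chi^{m_v}$ across the conductor; the remaining normal-classification and the identification of space diagonals with corner cuts are then finite, explicit verifications.
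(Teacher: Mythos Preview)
Your argument is correct and takes a genuinely different, more uniform route than the paper's own proof. The paper treats the two directions separately and somewhat ad hoc: for the Cartier direction it writes down a single piecewise linear function on each normal fan (negative the lattice distance from the center $(1,1,1)$ of $2Q$ to each facet of $2P$) and checks that this gives $\Delta$ when there are no corner cuts; for the non-$\mathbb{Q}$-Cartier direction it splits into two cases according to whether some edge $\ell_i$ of the hypotenuse lies in three maximal cells or only in $P$ and $R$, and in the second case it pulls back to $X_R$ and argues that $\Delta_R-D_R$ fails the PL criterion at a vertex of $R$ shared with $P$. Your approach subsumes both directions in a single local computation at the torus-fixed points: the three cube-face normals through any vertex $v\in\{0,1\}^3$ pin down $m_v=(2v_i-1)_i$ uniquely over $\mathbb{Q}$, and the only question is whether the conductor normals through $v$ are all orthogonal to this $m_v$, which your classification of interior-facet normals settles (face-diagonals yes, space-diagonals no). This avoids the paper's case split entirely; the paper's split is needed there precisely because it restricts to a \emph{single} component $X_R$, where not all three coordinate normals need appear, whereas you work on $X$ and use the full star of $v$.

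The one point you correctly flag as needing care is also the only real content beyond the combinatorics: that a local equation for $\Delta$ at $x_v$ on the seminormal $X$ is, up to unit, a \emph{single} character $\chi^{m_v}$ common to all components through $v$. The justification is exactly what you sketch: on each component the equation is a unit times a character (unique, since $x_v$ is a torus-fixed point of a full-dimensional cell), and across a shared conductor facet $F$ the two characters restrict to the same character of the rank-$2$ lattice $\{m:\langle m,n_F\rangle=0\}$, hence agree in $M$; connectedness of the dual graph of cells in the star of $v$ (the link of a cube corner in a polyhedral subdivision of $Q$ is a subdivided triangle) then propagates equality to all components. Once this is said, your proof is complete and arguably more transparent than the paper's.
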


\begin{proof}
Define a piecewise linear function on the normal fan of each maximal dimensional polytope $P\in\mathcal{P}$ as follows. If $u_\rho$ is the $\mathbb{Z}$-generator of a ray $\rho$, then associate to $u_\rho$ the integer $-d_\rho$, where $d_\rho$ is the lattice distance between the facet of $2P$ normal to the ray and the lattice point $(1,1,1)$. Observe that this number is $0$ if the facet contains $(1,1,1)$, and $-1$ otherwise. If $\mathcal{P}$ has no corner cuts, then this gives a Cartier divisor on $X$ equal to $\Delta$.

Now, assume that $\mathcal{P}$ contains a corner cut $P$. Denote by $R$ that unique polytope in $\mathcal{P}$ sharing exactly a facet with $P$. Let $\ell_1,\ell_2,\ell_3$ be the three edges of $P$ which do not contain the apex. Observe that $\ell_i$, $i=1,2,3$, can be contained in two or three maximal dimensional polytopes in $\mathcal{P}$, $P$ and $R$ included.

If some $\ell_i$ is contained in three maximal dimensional polytopes, then take a point $x\in X$ lying on the torus invariant line corresponding to $\ell_i$. If $\Delta$ is $\mathbb{Q}$-Cartier, then $m\Delta$ is given by the vanishing of one equation in an open neighborhood of $x$ for some $m>0$. However, the vanishing locus of this equation on $X_R$ has codimension $2$, which cannot be.

Assume that each $\ell_i$ is only contained in $P$ and $R$. Denote by $\nu\colon X^\nu\rightarrow X$ the normalization. If $\Delta$ is $\mathbb{Q}$-Cartier, then $(\nu^*\Delta)|_{X_R}=\Delta_R-D_R$ is also $\mathbb{Q}$-Cartier. But this is a contradiction because there is no $\mathbb{Q}$-piecewise linear function on the normal fan of $R$ corresponding to $\Delta_R-D_R$ (to see this, consider the normal cone to a vertex of $R$ in common with $P$).
\end{proof}

\begin{theorem}
\label{stabilityofthemodifiedpair}
Let $Q$ be the unit cube and let $(X,B)$ be a stable toric pair of type $Q$. Consider $(X^\bullet,B^\bullet)$ as in Definition~\ref{modificationcentralfiberbullet}. Then $\left(B^\bullet,\left(\frac{1+\epsilon}{2}\right)\Delta^\bullet|_{B^\bullet}\right)$ is a stable pair (we have that $\Delta^\bullet$ is $\mathbb{Q}$-Cartier by Proposition~\ref{toricboundarycartier}).
\end{theorem}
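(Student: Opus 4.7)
The plan is to verify the defining conditions of a stable pair for $\left(B^\bullet,\frac{1+\epsilon}{2}\Delta^\bullet|_{B^\bullet}\right)$ separately: the $\mathbb{Q}$-Cartier property, semi-log canonicity, and ampleness of the log canonical divisor. The $\mathbb{Q}$-Cartier property is immediate from Proposition~\ref{toricboundarycartier}: since by construction $\mathcal{P}^\bullet$ contains no corner cut, $\Delta^\bullet$ is already Cartier on $X^\bullet$, hence so is its restriction to the Cartier divisor $B^\bullet$. Demi-normality of $B^\bullet$ follows because $B^\bullet$ is a Cartier divisor in the seminormal scheme $X^\bullet$ that misses every torus-fixed point, so its codimension-one singularities are ordinary double crossings inherited from those of $X^\bullet$.

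The main step is a component-by-component check on the normalization $\nu\colon\amalg_{P\in\mathcal{P}^\bullet_3}B_P^\bullet\to B^\bullet$. On each $B_P^\bullet$, writing $E_P^\bullet=\Delta_P^\bullet-D_P^\bullet$ for the outer part of the toric boundary, the pair to analyze is $\left(B_P^\bullet,\,D_P^\bullet|_{B_P^\bullet}+\frac{1+\epsilon}{2}E_P^\bullet|_{B_P^\bullet}\right)$. Adjunction together with the toric identity $K_{X_P^\bullet}\sim-\Delta_P^\bullet$ shows that the log canonical divisor is $\mathbb{Q}$-linearly equivalent to $\left(B_P^\bullet-\frac{1-\epsilon}{2}E_P^\bullet\right)|_{B_P^\bullet}$. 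Because $Q\cap\mathbb{Z}^3$ consists of only the $8$ vertices of the cube and $\mathcal{P}^\bullet$ contains no corner cut, the possibilities for $P$ up to $\Sym(Q)$ are few: the whole cube, a triangular prism, a square pyramid, the ``equatorial'' tetrahedron with vertices $(0,0,0),(1,1,0),(1,0,1),(0,1,1)$, and the (possibly non-convex) polytopes $P=R\cup C$ produced by absorbing a corner cut $C$ into its unique neighbor $R$ in Definition~\ref{polyhedralsubdivisionpdot}. For each shape I would describe $X_P^\bullet$ as an explicit toric variety, verify that $B_P^\bullet$ is a rational surface with only nodal singularities away from the torus-fixed points, and check that $E_P^\bullet|_{B_P^\bullet}$ is a simple-normal-crossings union of smooth rational curves. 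Log canonicity of the pair then follows from the bound $\frac{1+\epsilon}{2}<1$ together with the absence of triple intersection points of these curves, which is inherited from the genericity in Proposition~\ref{mainconstruction} and preserved by the modification. Ampleness is checked by computing intersection numbers of the log canonical divisor against every toric curve on $B_P^\bullet$, a calculation dictated entirely by the combinatorics of $P$.

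I expect the main obstacle to be the absorbed polytopes $P=R\cup C$: these need not be convex, so $X_P^\bullet$ is a stable toric variety attached to a non-convex region rather than a standard normal toric variety, and the defining equation of $B_P^\bullet$ loses the monomial indexed by the apex of the corner cut. The delicate point is to verify log canonicity and ampleness along the ``indented'' region that used to be the facet $P\cap R$; indeed, the whole design of the modification in Section~\ref{modificationsofthepairs} is that precisely the loss of the apex monomial makes the restricted log canonical divisor ample, while simultaneously eliminating the higher-order intersection points of $E_P^\bullet|_{B_P^\bullet}$ that would obstruct log canonicity at a coefficient $\frac{1+\epsilon}{2}>\frac{1}{2}$. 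A secondary technical point will be the classification of which shapes $R$ can actually be adjacent to a corner cut in a legal subdivision of the cube, needed to make the case analysis exhaustive and to confirm that no unexpected combinatorial type appears in $\mathcal{P}^\bullet$.
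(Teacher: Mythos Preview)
Your overall strategy---reducing to a component-by-component verification on the normalization, and computing the log canonical divisor on each $B_P^\bullet$ via adjunction and the toric identity $K_{X_P^\bullet}+\Delta_P^\bullet\sim 0$---is exactly the route the paper takes. However, two concrete errors would derail the execution.

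First, your enumeration of the polytopes $P\in\mathcal{P}^\bullet_3$ is wrong. The equatorial tetrahedron can never occur in $\mathcal{P}^\bullet$: in any polyhedral subdivision of $(Q,Q\cap\mathbb{Z}^3)$ each of its four facets is the interior facet of a corner cut, so it is always absorbed. Conversely, the absorbed unions $R\cup C$ are always \emph{convex} and land among the same four shapes; for instance, a corner cut glued to an adjacent non-corner-cut unit tetrahedron is a square pyramid, and absorbing all four corner cuts around the equatorial tetrahedron yields the whole cube. The correct list (up to $\Sym(Q)$) is: a unit tetrahedron with exactly two facets on $\partial Q$ (giving $X_P^\bullet\cong\mathbb{P}^3$), a square pyramid, a triangular prism, and the full cube. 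No non-convex polytope ever arises, so your anticipated ``main obstacle'' is a phantom.

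Second, and more seriously, your claim that the modification preserves the absence of triple intersection points is backwards. Absorbing a corner cut sets the apex monomial to zero in the equation of $B_P^\bullet$, and this \emph{creates} a triple point of $(\Delta_P^\bullet-D_P^\bullet)|_{B_P^\bullet}$; for example, in the square-pyramid case with the apex coefficient $a_0=0$, three of the boundary curves on $B_P^\bullet\cong\mathbb{P}^1\times\mathbb{P}^1$ become concurrent. Log canonicity at such a point does \emph{not} follow from the bound $\frac{1+\epsilon}{2}<1$ alone: one needs the explicit blow-up computation showing that three concurrent lines in $\mathbb{A}^2$ with weight $\frac{1+\epsilon}{2}$ give discrepancy $-\frac{1+3\epsilon}{2}>-1$. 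The paper isolates this as a lemma and invokes it in each of the cases $(b)$, $(c)$, $(d)$. In the full-cube case one further needs an analogous computation for four planes through a point of $\mathbb{A}^3$ (three with weight $\frac{1+\epsilon}{2}$ and one with weight $1$), combined with inversion of adjunction, to handle the quadruple points of $\Delta_P^\bullet+B_P^\bullet$ that appear when some $c_{ijk}=0$. Without these local computations your argument has a genuine gap at precisely the singularities produced by the modification.
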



\subsection{Proof of Theorem~\ref{stabilityofthemodifiedpair}}
Let $\mathcal{P}$ be the polyhedral subdivision of $(Q,Q\cap\mathbb{Z}^3)$ associated to $(X,B)$. We show that for all $P\in\mathcal{P}^\bullet$, the pair $\left(B_P^\bullet,D_P^\bullet|_{B_P^\bullet}+\left(\frac{1+\epsilon}{2}\right)(\Delta_P^\bullet-D_P^\bullet)|_{B_P^\bullet}\right)$ is stable. As shown in Figure~\ref{polytopeswithoutcornercuts}, there are four possibilities for $P$ up to symmetries of $Q$.

\begin{definition}
\label{definitiontypepolytopeinpbullet}
We say that $P\in\mathcal{P}^\bullet$ has \emph{type} $(a)$ (resp. $(b),(c),(d)$) if $P$ is equal to the polytope in Figure~\ref{polytopeswithoutcornercuts} $(a)$ (resp. $(b),(c),(d)$) up to a symmetry of $Q$.
\end{definition}

\begin{figure}[H]
\centering
\includegraphics[scale=0.40,valign=t]{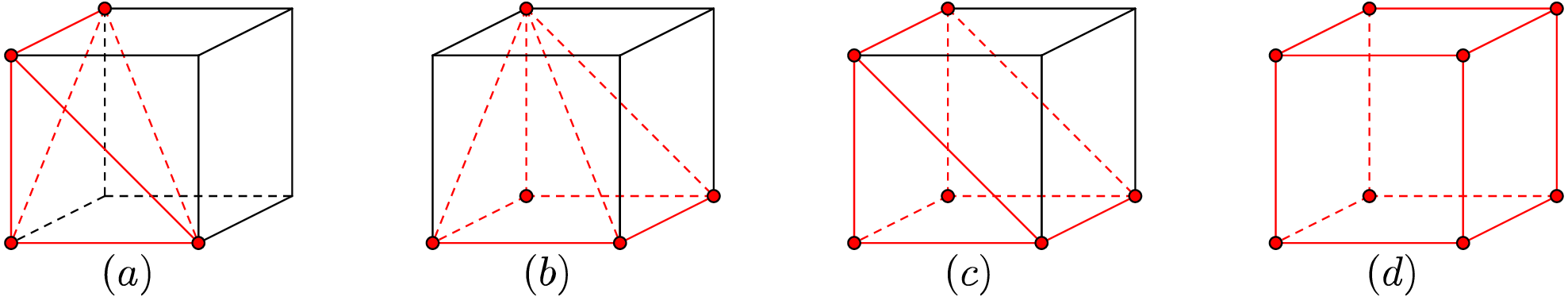}
\caption{Possible maximal dimensional polytopes in $\mathcal{P}^\bullet$ up to symmetries of $Q$.}
\label{polytopeswithoutcornercuts}
\end{figure}

\begin{proposition}
\label{adescription}
Given $P$ of type $(a)$, then $\left(B_P^\bullet,D_P^\bullet|_{B_P^\bullet}+\left(\frac{1+\epsilon}{2}\right)(\Delta_P^\bullet-D_P^\bullet)|_{B_P^\bullet}\right)$ is stable.
\end{proposition}

\begin{proof}
We have $X_P^\bullet=\mathbb{P}^3$ with coordinates $[W_0:\ldots:W_3]$, $D_P^\bullet=V(W_0)+V(W_1)$, $\Delta_P^\bullet-D_P^\bullet=V(W_2)+V(W_3)$, and $B_P^\bullet=V(a_0W_0+\ldots+a_3W_3)$, where $a_i\neq0$ for all $i=0,\ldots,3$. To find the equation of $B_P^\bullet$ we used Remark~\ref{combinatorialdescriptionstabletoricpair}, and $a_i\neq0$ for all $i$ because no corner cut can be contained in $P$ (recall the construction in \S\ref{themodifiedfamilywithbullet}). Then $B_P^\bullet$ is isomorphic to $\mathbb{P}^2$ and $\Delta_P^\bullet$ restricts to $B_P^\bullet$ giving four lines in general linear position, implying that $\left(B_P^\bullet,D_P^\bullet|_{B_P^\bullet}+\left(\frac{1+\epsilon}{2}\right)(\Delta_P^\bullet-D_P^\bullet)|_{B_P^\bullet}\right)$ is log canonical. Finally, if $L$ denotes a general line in $B_P^\bullet$, then
\begin{equation*}
K_{B_P^\bullet}+D_P^\bullet|_{B_P^\bullet}+\left(\frac{1+\epsilon}{2}\right)(\Delta_P^\bullet-D_P^\bullet)|_{B_P^\bullet}\sim\epsilon L,
\end{equation*}
which is ample.
\end{proof}

\begin{observation}
\label{threeincidentlinesweightslogcanonical}
The following standard fact will be useful in the analysis that follows. Let $L_1,L_2,L_3$ be three distinct concurrent lines in $\mathbb{A}^2$. Then the pair $\left(\mathbb{A}^2,\left(\frac{1+\epsilon}{2}\right)(L_1+L_2+L_3)\right)$ is log canonical.
\end{observation}

\begin{proposition}
\label{bdescription}
Given $P$ of type $(b)$, then $\left(B_P^\bullet,D_P^\bullet|_{B_P^\bullet}+\left(\frac{1+\epsilon}{2}\right)(\Delta_P^\bullet-D_P^\bullet)|_{B_P^\bullet}\right)$ is stable.
\end{proposition}

\begin{proof}
We have $X_P^\bullet=V(W_0W_1-W_2W_3)\subseteq\mathbb{P}^4$, $D_P^\bullet=V(W_0W_1-W_2W_3,W_0)$, $\Delta_P^\bullet-D_P^\bullet=V(W_0W_1-W_2W_3,W_1)+V(W_0W_1-W_2W_3,W_4)$. Finally, $B_P^\bullet=V(a_0W_0+\ldots+a_4W_4)\cap X_P^\bullet$, where $a_i\neq0$ for $i=1,\ldots,4$ and $a_0$ can possibly vanish (we can have at most one corner cut contained in $P$). $B_P^\bullet\cong\mathbb{P}^1\times\mathbb{P}^1$ because it is a hyperplane section of the projective cone $X_P^\bullet$ which does not pass through the vertex.

Now let us study the restrictions of $D_P^\bullet$ and $\Delta_P^\bullet-D_P^\bullet$ to $B_P^\bullet$. This boils down to understand how the coordinate hyperplanes $H_i=V(W_i)$, $i=0,\ldots,4$, restrict to $B_P^\bullet$. First of all, observe that $H_i$ cuts on $B_P^\bullet$ a curve $C$ of divisor class $(1,1)$. To show this, denote by $(a,b)$ the divisor class of $C=B_P^\bullet\cap H_i$. If $H'=V(a_0W_0+\ldots+a_4W_4)$ and $H$ is a general hyperplane in $\mathbb{P}^4$, then the self-intersection of $C$ is given by
\begin{equation*}
C^2=H_i|_{B_P^\bullet}\cdot H_i|_{B_P^\bullet}=H_i\cdot H_i\cdot B_P^\bullet=H_i\cdot H_i\cdot X_P^\bullet\cdot H'=H_i\cdot H_i\cdot2H\cdot H'=2.
\end{equation*}
On the other hand, $C^2=(a,b)^2=2ab=2$, implying that $(a,b)=(1,1)$. For $i\neq4$, $H_i\cap B_P^\bullet$ is always reducible, so it is given by two curves of divisor classes $(1,0)$ and $(0,1)$. If $a_0\neq0$, then it is easy to check that $H_4\cap B_P^\bullet$ is smooth for a general choice of the coefficients, but it can possibly break into two curves. In any case, $\Delta_P^\bullet$ restricts to $B_P^\bullet$ giving a simple normal crossing divisor, implying that $\left(B_P^\bullet,D_P^\bullet|_{B_P^\bullet}+\left(\frac{1+\epsilon}{2}\right)(\Delta_P^\bullet-D_P^\bullet)|_{B_P^\bullet}\right)$ is log canonical.

If $a_0=0$, then $H_4\cap B_P^\bullet$ is irreducible and it passes through the singular point of $H_1\cap B_P^\bullet$. In this case we have that $\left(B_P^\bullet,D_P^\bullet|_{B_P^\bullet}+\left(\frac{1+\epsilon}{2}\right)(\Delta_P^\bullet-D_P^\bullet)|_{B_P^\bullet}\right)$ is log canonical by Observation~\ref{threeincidentlinesweightslogcanonical}.

Finally, observe that
\begin{equation*}
K_{B_P^\bullet}+D_P^\bullet|_{B_P^\bullet}+\left(\frac{1+\epsilon}{2}\right)(\Delta_P^\bullet-D_P^\bullet)|_{B_P^\bullet}\sim(-2,-2)+(1,1)+\left(\frac{1+\epsilon}{2}\right)(2,2)=\epsilon(1,1),
\end{equation*}
which is ample.
\end{proof}

\begin{proposition}
\label{cdescription}
Given $P$ of type $(c)$, then $\left(B_P^\bullet,D_P^\bullet|_{B_P^\bullet}+\left(\frac{1+\epsilon}{2}\right)(\Delta_P^\bullet-D_P^\bullet)|_{B_P^\bullet}\right)$ is stable.
\end{proposition}

\begin{proof}
We have $X_P^\bullet=\mathbb{P}^2\times\mathbb{P}^1$ with coordinates $([X_0:X_1:X_2],[Y_0:Y_1])$, $D_P^\bullet=V(X_2)$, $\Delta_P^\bullet-D_P^\bullet=V(X_0X_1Y_0Y_1)$, and $B_P^\bullet=V((a_0X_0+a_1X_1+a_2X_2)Y_0+(b_0X_0+b_1X_1+b_2X_2)Y_1)$, where $a_0,a_1,b_0,b_1\neq0$ and at most one among $a_2$ and $b_2$ can be zero because there is at most one corner cut contained in $P$. Let us start by assuming that $a_2b_2\neq0$.

If $B_P^\bullet$ is singular, then one can show that $(b_0,b_1,b_2)=\lambda(a_0,a_1,a_2)$ for some $\lambda\in\mathbb{C}^*$. Therefore, the equation of $B_P^\bullet$ becomes
\begin{equation*}
(a_0X_0+a_1X_1+a_2X_2)(Y_0+\lambda Y_1)=0,
\end{equation*}
where $V(Y_0+\lambda Y_1)\cong\mathbb{P}^2$ and $V(a_0X_0+a_1X_1+a_2X_2)\cong\mathbb{P}^1\times\mathbb{P}^1$ are glued along a ruling of $\mathbb{P}^1\times\mathbb{P}^1$ and the line $a_0X_0+a_1X_1+a_2X_2=0$ in $\mathbb{P}^2$. The restrictions of $D_P^\bullet$ and $\Delta_P^\bullet-D_P^\bullet$ to these two irreducible components are described in Remark~\ref{explicitdescriptionirreduciblecomponentsdegenerations} $(c3)$. In this case, to conclude that $\left(B_P^\bullet,D_P^\bullet|_{B_P^\bullet}+\left(\frac{1+\epsilon}{2}\right)(\Delta_P^\bullet-D_P^\bullet)|_{B_P^\bullet}\right)$ is stable, we reduce the question to each connected component of the normalization of $B_P^\bullet$ and we apply what we already proved in the cases $(a)$ and $(b)$ above.

Now let us assume that $B_P^\bullet$ is smooth (and hence irreducible). By the discussion above, the two vectors $(a_0,a_1,a_2)$ and $(b_0,b_1,b_2)$ are not proportional. Denote by $p$ the point of intersection of the two lines $a_0X_0+a_1X_1+a_2X_2=0$ and $b_0X_0+b_1X_1+b_2X_2=0$ in $\mathbb{P}^2$. If $\pi\colon B_P^\bullet\rightarrow\mathbb{P}^2$ is the restriction to $B_P^\bullet$ of the projection map $\mathbb{P}^2\times\mathbb{P}^1\rightarrow\mathbb{P}^2$, then $\pi$ restricted to the complement of $\pi^{-1}(p)$ is an isomorphism, and $\pi^{-1}(p)\cong\mathbb{P}^1$. This proves that $B_P^\bullet\cong\mathbb{F}_1$. In this case, let us explain how $D_P^\bullet|_{B_P^\bullet}$ depends on the coefficients $a_i,b_j$. The restriction $D_P^\bullet|_{B_P^\bullet}$ has equation
\begin{displaymath}
\left\{ \begin{array}{ll}
X_2=0,\\
(a_0X_0+a_1X_1)Y_0+(b_0X_0+b_1X_1)Y_1=0.
\end{array} \right.
\end{displaymath}
By an argument analogous to what we did for $B_P^\bullet$, we have that this restriction is irreducible if and only if $(a_0,a_1)$ and $(b_0,b_1)$ are not proportional. In this case, $D_P^\bullet|_{B_P^\bullet}$ is a section of $B_P^\bullet$ with self-intersection $1$. If $(a_0,a_1)$ and $(b_0,b_1)$ are proportional, then the equation of $D_P^\bullet|_{B_P^\bullet}$ becomes
\begin{displaymath}
\left\{ \begin{array}{ll}
X_2=0,\\
(a_0X_0+a_1X_1)(Y_0+\lambda Y_1)=0
\end{array} \right.
\end{displaymath}
for some $\lambda\in\mathbb{C}^*$. The irreducible component $V(X_2,a_0X_0+a_1X_1)$ (resp. $V(X_2,Y_0+\lambda Y_1)$) is the exceptional section (resp. a fiber) of $B_P^\bullet$. We are left with understanding $(\Delta_P^\bullet-D_P^\bullet)|_{B_P^\bullet}$, and for this we need to study how $V(X_i),V(Y_i)$, $i=0,1$, restrict to $B_P^\bullet$. But $V(Y_i)$ restricts giving a fiber, and we can study $V(X_i)|_{B_P^\bullet}$ in the same way we did for $D_P^\bullet|_{B_P^\bullet}$. Observe that at most one among $V(X_0)|_{B_P^\bullet},V(X_1)|_{B_P^\bullet},D_P^\bullet|_{B_P^\bullet}$ can be reducible (otherwise $B_P^\bullet$ would be reducible). We conclude that $\left(B_P^\bullet,D_P^\bullet|_{B_P^\bullet}+\left(\frac{1+\epsilon}{2}\right)(\Delta_P^\bullet-D_P^\bullet)|_{B_P^\bullet}\right)$ is log canonical.

Now we consider the case where exactly one among $a_2$ and $b_2$ is zero. It is easy to check that $B_P^\bullet$ is automatically smooth and a description similar to the one above applies, with the only difference that the restriction $(\Delta_P^\bullet-D_P^\bullet)|_{B_P^\bullet}$ can acquire a triple intersection point. In this case we know that $\left(B_P^\bullet,D_P^\bullet|_{B_P^\bullet}+\left(\frac{1+\epsilon}{2}\right)(\Delta_P^\bullet-D_P^\bullet)|_{B_P^\bullet}\right)$ is log canonical by Observation~\ref{threeincidentlinesweightslogcanonical}.

For the ampleness condition, let $h$ be a section of self-intersection $1$ on $B_P^\bullet\cong\mathbb{F}_1$ and let $f$ be a fiber. Then
\begin{equation*}
K_{B_P^\bullet}+D_P^\bullet|_{B_P^\bullet}+\left(\frac{1+\epsilon}{2}\right)(\Delta_P^\bullet-D_P^\bullet)|_{B_P^\bullet}\sim-2h-f+h+\left(\frac{1+\epsilon}{2}\right)(2h+2f)=\epsilon(h+f),
\end{equation*}
which is ample.
\end{proof}

\begin{proposition}
\label{ddescription}
Given $P$ of type $(d)$, then $\left(B_P^\bullet,D_P^\bullet|_{B_P^\bullet}+\left(\frac{1+\epsilon}{2}\right)(\Delta_P^\bullet-D_P^\bullet)|_{B_P^\bullet}\right)$ is stable.
\end{proposition}

\begin{proof}
We have $X_P^\bullet=(\mathbb{P}^1)^3$ with coordinates $([X_0:X_1],[Y_0:Y_1],[Z_0:Z_1])$, $D_P^\bullet=\emptyset$, $\Delta_P^\bullet=V(X_0X_1Y_0Y_1Z_0Z_1)$, and $B_P^\bullet=V\left(\sum_{i,j,k=0,1}c_{ijk}X_iY_jZ_k\right)$, where any two distinct coefficients $c_{ijk}$ and $c_{i'j'k'}$ cannot be simultaneously zero if $(i,j,k)$ and $(i',j',k')$ are vertices of the same edge of the cube. This is because inside $Q$ we cannot fit two corner cuts with apices lying on the same edge.

Let us first assume that $B_P^\bullet$ is smooth (hence irreducible), which holds for a general choice of the coefficients $c_{ijk}$ by Bertini's Theorem. Then the anticanonical class $-K_{B_P^\bullet}=-(K_{(\mathbb{P}^1)^3}+B_P^\bullet)|_{B_P^\bullet}=(1,1,1)|_{B_P^\bullet}$ is ample and $K_{B_P^\bullet}^2=6$, implying that $B_P^\bullet\cong\Bl_3\mathbb{P}^2$ (see \cite[Exercise V.21(1)]{Bea96}). If all the $c_{ijk}$ are nonzero, then the restriction $\Delta_P^\bullet|_{B_P^\bullet}$ can be as in Proposition~\ref{mainconstruction} (general case), or some of these lines can break into the union of two incident $(-1)$-curves. If some coefficients $c_{ijk}$ are zero, then the lines configuration $\Delta_P^\bullet|_{B_P^\bullet}$ acquires triple intersection points. In any case, the pair $\left(B_P^\bullet,\left(\frac{1+\epsilon}{2}\right)\Delta_P^\bullet|_{B_P^\bullet}\right)$ is log canonical, where we use Observation~\ref{threeincidentlinesweightslogcanonical} in the case of triple intersection points.

Now assume that $B_P^\bullet$ is irreducible and singular. Let $p\in B_P^\bullet$ be a singular point. We prove that (1) $p$ is a singularity of type $A_1$, (2) that $p$ is the only singular point of $B_P^\bullet$, and (3) that $p$ lies on at most one irreducible component of $\Delta_P^\bullet$. We can assume that $p$ is in the form $([1:a],[1:b],[1:c])$. The invertible change of coordinates $X_0'=X_0$, $X_1'=X_1-aX_0$ and so on, sends $B_P^\bullet$ to an isomorphic surface $B_P'$ which is singular at $p'=([1:0],[1:0],[1:0])$. If we set $x'=\frac{X_1'}{X_0'},y'=\frac{Y_1'}{Y_0'},z'=\frac{Z_1'}{Z_0'}$, then the equation of $B_P'$ in this affine patch is in the form
\begin{equation*}
c_0x'y'+c_1x'z'+c_2y'z'+c_3x'y'z'=0.
\end{equation*}
The coefficients $c_0,c_1,c_2$ are nonzero because $B_P'$ is irreducible. Therefore it is clear that the singularity is of type $A_1$, proving (1). After homogeneizing the above equation and by checking all the affine patches, we can see that $p'$ is the only singularity of $B_P'$, implying (2). Finally, if we assume that $p$ lies on two irreducible components of $\Delta_P^\bullet$, then we can compute that two coefficients $c_{ijk},c_{i'j'k'}$ are zero with $(i,j,k),(i',j',k')$ adjacent vertices of the cube, which cannot be. So (3) holds as well. To prove that $\left(B_P^\bullet,\left(\frac{1+\epsilon}{2}\right)\Delta_P^\bullet|_{B_P^\bullet}\right)$ is log canonical, we show that $\left(X_P^\bullet,\left(\frac{1+\epsilon}{2}\right)\Delta_P^\bullet+B_P^\bullet\right)$ is log canonical and then we use inversion of adjunction (see \cite{Kaw07}). This is done in two steps.

\begin{itemize}

\item[(i)] First we show that $\left(X_P^\bullet,\left(\frac{1+\epsilon}{2}\right)\Delta_P^\bullet+B_P^\bullet\right)$ is log canonical in a neighborhood of a quadruple intersection point $q$ of $\Delta_P^\bullet+B_P^\bullet$. Note that in this case $q$ must be different from the singular point $p$ by (3) above. Assume without loss of generality that $q=([1:0],[1:0],[1:0])$, and therefore $c_{000}=0$. In an affine neighborhood of $q$ the equation of $B_P^\bullet$ becomes
\begin{equation*}
c_{100}x+c_{010}y+c_{001}z+c_{110}xy+c_{101}xz+c_{011}yz+c_{111}xyz=0,
\end{equation*}
where we must have $c_{100},c_{010},c_{001}$ nonzero. The affine equations for $\Delta_P^\bullet$ at $q$ are $x=0,y=0,z=0$. Therefore, locally at $q$, the four irreducible components of $\Delta_P^\bullet+B_P^\bullet$ are equivalent to hyperplanes in general linear position. It is a standard calculation to show that this singularity of $\left(X_P^\bullet,\left(\frac{1+\epsilon}{2}\right)\Delta_P^\bullet+B_P^\bullet\right)$ is log canonical.

\item[(ii)] $\left(X_P^\bullet,\left(\frac{1+\epsilon}{2}\right)\Delta_P^\bullet+B_P^\bullet\right)$ is log canonical in the complement of the quadruple intersection points. This is true in the complement of $\Delta_P^\bullet$ because $B_P^\bullet$ has at most an $A_1$ singularity. Let $H\subseteq\Delta_P^\bullet$ be an irreducible component. We show that $\left(X_P^\bullet,\Delta_P^\bullet+B_P^\bullet\right)$ is log canonical in a neighborhood of $H$ away from the quadruple intersection points. But this follows from inversion of adjunction because $(H,(\Delta_P^\bullet-H+B_P^\bullet)|_H)$ is log canonical away from the quadruple intersection points. More in detail, $H\cong\mathbb{P}^1\times\mathbb{P}^1$, $(\Delta_P^\bullet-H)|_H$ gives the toric boundary of $H$, and $B_P^\bullet|_H$ is a $(1,1)$-curve with no components in common with the toric boundary.

\end{itemize}

We are left with the case $B_P^\bullet$ reducible. Up to symmetries, decompositions into two irreducible components are given by
\begin{equation*}
(aX_0Y_0+bX_0Y_1+cX_1Y_0+dX_1Y_1)(eZ_0+fZ_1)=0,
\end{equation*}
where the coefficients are nonzero and satisfy $ad\neq bc$. Both irreducible components are isomorphic to $\mathbb{P}^1\times\mathbb{P}^1$. Decompositions into three irreducible components are given by
\begin{equation*}
(aX_0+bX_1)(cY_0+dY_1)(eZ_0+fZ_1)=0,
\end{equation*}
where the coefficients are nonzero. Note that up to $\Aut((\mathbb{P}^1)^3)$ there is only one choice of such coefficients. The three irreducible components are isomorphic to $\mathbb{P}^1\times\mathbb{P}^1$. In both cases, information about how the irreducible components are glued together and how $\Delta_P^\bullet$ restricts to these can be found in Remark~\ref{explicitdescriptionirreduciblecomponentsdegenerations}~$(d2)$ and $(d3)$. From these observations we can argue that $\left(B_P^\bullet,\left(\frac{1+\epsilon}{2}\right)\Delta_P^\bullet|_{B_P^\bullet}\right)$ is semi-log canonical.

To conclude, $K_{B_P^\bullet}+\left(\frac{1+\epsilon}{2}\right)\Delta_P^\bullet|_{B_P^\bullet}$ is ample because it is the pullback to $B_{P}^\bullet$ of
\begin{equation*}
K_{X_P^\bullet}+B_P^\bullet+\left(\frac{1+\epsilon}{2}\right)\Delta_P^\bullet\sim\epsilon(1,1,1),
\end{equation*}
which is ample.
\end{proof}

The last proposition concludes the proof of Theorem~\ref{stabilityofthemodifiedpair}.\qed

\begin{remark}
\label{explicitdescriptionirreduciblecomponentsdegenerations}
The proof of Theorem~\ref{stabilityofthemodifiedpair} gives an explicit description of the possible stable pairs $\left(B_P^\bullet,D_P^\bullet|_{B_P^\bullet}+\left(\frac{1+\epsilon}{2}\right)(\Delta_P^\bullet-D_P^\bullet)|_{B^\bullet}\right)$ for all the stable toric pairs $(X,B)$ of type $Q$. Here we summarize these possibilities. In Figure~\ref{explicitdegenerationsinvariantwallcrossing}, a triangle (resp. trapezoid, parallelogram) means $B_P^\bullet\cong\mathbb{P}^2$ (resp. $\mathbb{F}^1$, $\mathbb{P}^1\times\mathbb{P}^1$). $D_P^\bullet|_{B_P^\bullet}$ is represented by the thickened segments and $(\Delta_P^\bullet-D_P^\bullet)|_{B_P^\bullet}$ by the colored segments. First, let us assume that $\mathcal{P}$ has no corner cuts, so that $(X,B)=(X^\bullet,B^\bullet)$. As a consequence of this, in the cases that follow the divisor $\Delta_P|_{B_P}$ is simple normal crossing.
\begin{itemize}
\item[$(a)$] $B_P\cong\mathbb{P}^2$ and $D_P|_{B_P}$ (resp. $(\Delta_P-D_P)|_{B_P}$) consists of two lines;
\item[$(b)$] $B_P\cong\mathbb{P}^1\times\mathbb{P}^1$ and $D_P|_{B_P}$ consists of two incident rulings. $(\Delta_P-D_P)|_{B_P}$ is given by two incident rulings and a curve of divisor class $(1,1)$ which can possibly be reducible;
\item[$(c1)$] $B_P\cong\mathbb{F}_1$ and $D_P|_{B_P}$ is a line disjoint from the exceptional divisor. $(\Delta_P-D_P)|_{B_P}$ is given by two fibers and two lines disjoint from the exceptional divisor. Exactly one of these last two lines can possibly break into the union of the exceptional divisor and a fiber;
\item[$(c2)$] $B_P\cong\mathbb{F}_1$ and $D_P|_{B_P}$ is the union of the exceptional divisor and a fiber. $(\Delta_P-D_P)|_{B_P}$ is given by two fibers and two lines disjoint from the exceptional divisor;
\item[$(c3)$] $B_P$ is isomorphic to the union of $\mathbb{P}^2$ and $\mathbb{P}^1\times\mathbb{P}^1$ glued along a line in $\mathbb{P}^2$ and a ruling in $\mathbb{P}^1\times\mathbb{P}^1$. $D_P|_{B_P}$ consists of a line in $\mathbb{P}^2$ and a ruling in $\mathbb{P}^1\times\mathbb{P}^1$. $(\Delta_P-D_P)|_{B_P}$ is given by two lines on the $\mathbb{P}^2$ component and four rulings on $\mathbb{P}^1\times\mathbb{P}^1$ arranged as shown in Figure~\ref{explicitdegenerationsinvariantwallcrossing} $(c3)$;
\item[$(d1)$] $B_P\cong\Bl_3\mathbb{P}^2$ and $D_P|_{B_P}=\emptyset$. $\Delta_P|_{B_P}$ is as in Proposition~\ref{mainconstruction} (this is the general case), or some lines can possibly break into two intersecting $(-1)$-curves;
\item[$(d1')$] $B_P$ is a singular del Pezzo surface of degree $6$ with exactly one $A_1$ singularity. This singularity can lie on at most one irreducible component of $\Delta_P|_{B_P}$;
\item[$(d2)$] $B_P$ is isomorphic to the union of two copies of $\mathbb{P}^1\times\mathbb{P}^1$ glued along a ruling and an irreducible curve of divisor class $(1,1)$. $\Delta_P|_{B_P}$ is given by four rulings on one component and six rulings on the other. These are arranged as shown in Figure~\ref{explicitdegenerationsinvariantwallcrossing} $(d2)$;
\item[$(d3)$] $B_P$ is isomorphic to the union of three copies of $\mathbb{P}^1\times\mathbb{P}^1$ glued along rulings as shown in Figure~\ref{explicitdegenerationsinvariantwallcrossing} $(d3)$. $\Delta_P|_{B_P}$ consists of four rulings on each component as shown in the same figure.
\end{itemize}
Now assume that $\mathcal{P}$ has a corner cut and let $P\in\mathcal{P}^\bullet$. In this case, the possibilities for the pair $\left(B_P^\bullet,D_P^\bullet|_{B_P^\bullet}+\left(\frac{1+\epsilon}{2}\right)(\Delta_P^\bullet-D_P^\bullet)|_{B_P^\bullet}\right)$ are as above, with the difference that $(\Delta_P^\bullet-D_P^\bullet)|_{B_P^\bullet}$ is allowed to have triple intersection points.
\end{remark}

\begin{figure}
\centering
\includegraphics[scale=0.70,valign=t]{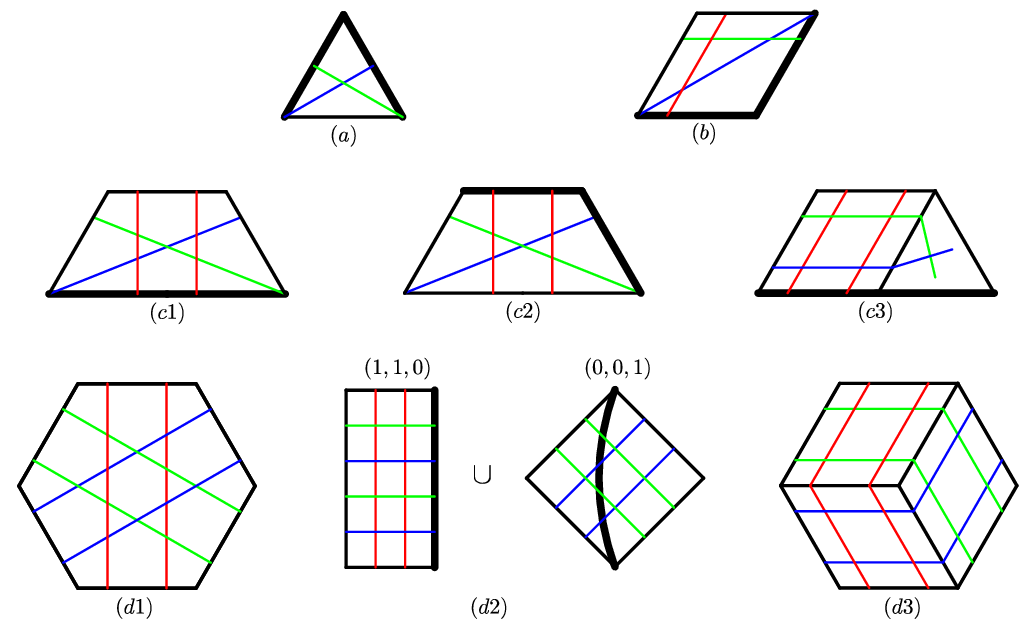}
\caption{The pictures represent $\left(B_P^\bullet,D_P^\bullet|_{B_P^\bullet}+\left(\frac{1+\epsilon}{2}\right)(\Delta_P^\bullet-D_P^\bullet)|_{B_P^\bullet}\right)$ for $P\in\mathcal{P}^\bullet$ (see Remark~\ref{explicitdescriptionirreduciblecomponentsdegenerations}). The restriction $(\Delta_P^\bullet-D_P^\bullet)|_{B_P^\bullet}$ is in color and $D_P^\bullet|_{B_P^\bullet}$ is thickened. In $(d2)$ the two surfaces are glued along the thickened curves.}
\label{explicitdegenerationsinvariantwallcrossing}
\end{figure}


\section{Study of the KSBA compactification}
\label{sectiononksbacompactification}

We are now ready to combine the results from \S\ref{modificationsofthepairs} and \S\ref{wallcrossingstudy} to prove that the rational map $\overline{\mathbf{M}}_Q\dashrightarrow\overline{\mathbf{M}}$ extends to the whole $\overline{\mathbf{M}}_Q$ (see Theorem~\ref{allyouneedtoknowaboutksbapoints}). As a consequence, in Theorem~\ref{propertiesofthemapstptoksba} we realize the stable pair compactification $\overline{\mathbf{M}}$ as a finite quotient of $\overline{\mathbf{M}}_Q$. Finally, in \S\ref{globalgeography} we study the structure of the boundary of $\overline{\mathbf{M}}$ and list the degenerations parametrized by it.


\subsection{The morphism $\overline{\mathbf{M}}_Q\rightarrow\overline{\mathbf{M}}$}

We start by recalling a result from \cite{GG14}.

\begin{notation}
\label{setupforlimits}
Consider a map $g\colon\Spec(K)\rightarrow Y$, where $K$ is the field of fractions of a DVR $(A,\mathfrak{m})$ with residue field our fixed base field $\mathbb{C}$. Let $Y$ be a proper scheme over a noetherian scheme $S$. By the valuative criterion of properness, $g$ uniquely extends to a map $\overline{g}\colon\Spec(A)\rightarrow Y$. We denote by $\lim g$ the point $\overline{g}(\mathfrak{m})$.
\end{notation}

\begin{theorem}[{\cite[Theorem 7.3]{GG14}}]
\label{lemmaextensionmapmodulispaces}
Suppose $X_1$ and $X_2$ are proper schemes over $\mathbb{C}$ with $X_1$ normal. Let $U\subseteq X_1$ be an open dense subset and consider a morphism $f\colon U\rightarrow X_2$. Then $f$ extends to a morphism $\overline{f}\colon X_1\rightarrow X_2$ if and only if, for any point $x\in X_1$, any DVR $(A,\mathfrak{m})$ as in Notation~\ref{setupforlimits}, and any morphism $g\colon\Spec(K)\rightarrow U$ such that $\lim g=x$, the point $\lim(f\circ g)$ in $X_2$ only depends on $x$, and not on the choice of $g$.
\end{theorem}

\begin{theorem}
\label{allyouneedtoknowaboutksbapoints}
Let $Q$ be the unit cube. Then there is a surjective morphism $\overline{\mathbf{M}}_Q\rightarrow\overline{\mathbf{M}}$ which on $\mathbb{C}$-points is given by
\begin{equation*}
(X,B)\mapsto\left(B^\bullet,\left(\frac{1+\epsilon}{2}\right)\Delta^\bullet|_{B^\bullet}\right).
\end{equation*}
\end{theorem}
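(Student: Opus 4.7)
The plan is to apply the Giansiracusa--Gillam extension criterion (Theorem~\ref{lemmaextensionmapmodulispaces}) to extend the tautological morphism $f\colon \mathcal{U}_{\sm}\to\overline{M}_{D_{1,6}}$ of Definition~\ref{ksbacompactification} to a morphism on all of $\overline{M}_Q^\nu$. The ingredients are already in place: $\mathcal{U}_{\sm}$ is a dense open subset of $\overline{M}_Q^\nu$ (since $\overline{M}_Q$ is irreducible by Proposition~\ref{irreducibilitycoarsemoduliofstabletoricpairs}, and normalization is an isomorphism over the smooth locus that contains $\mathcal{U}$), $\overline{M}_Q^\nu$ is normal by construction, and $\overline{M}_{D_{1,6}}$ is proper as a closed subscheme of $\overline{M}_{6\epsilon^2}/S_3$.

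First I would verify the limit condition. Let $(A,\mathfrak{m})$ be a DVR with fraction field $K$, and let $g\colon\Spec(K)\to\mathcal{U}_{\sm}$ be a morphism; by properness its extension $\overline{g}\colon\Spec(A)\to\overline{M}_Q^\nu$ has $\lim g=\overline{g}(\mathfrak{m})$ corresponding to a stable toric pair $(X,B)$ of type $\leq Q$. Pulling back the universal family of Lemma~\ref{importantfamily}, and making a ramified finite base change if necessary so that the limit is represented by a genuine family of stable toric pairs (as in Section~\ref{familiesofstabletoricpairsoveradvr} and by the properness part of Theorem~\ref{modulistabletoricpairs}(i)), I obtain $(\mathfrak{X},\mathfrak{B})/\Spec(A)$ with central fiber $(X,B)$. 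The construction of Section~\ref{themodifiedfamilywithbullet} then produces $(\mathfrak{X}^\bullet,\mathfrak{B}^\bullet)/\Spec(A)$, which agrees with $(\mathfrak{X},\mathfrak{B})$ on the generic fiber but has new central fiber $(X^\bullet,B^\bullet)$. Theorem~\ref{stabilityofthemodifiedpair} guarantees that $\left(B^\bullet,\left(\tfrac{1+\epsilon}{2}\right)\Delta^\bullet|_{B^\bullet}\right)$ is a stable pair, so $\left(\mathfrak{B}^\bullet,\left(\tfrac{1+\epsilon}{2}\right)\Delta^\bullet|_{\mathfrak{B}^\bullet}\right)/\Spec(A)$ is a family of stable pairs whose generic fiber coincides with the one corresponding to $f\circ g$. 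By separatedness of the Viehweg moduli functor $\overline{\mathcal{M}}_{6\epsilon^2}$, this forces $\lim(f\circ g)$ to be the class of $\left(B^\bullet,\left(\tfrac{1+\epsilon}{2}\right)\Delta^\bullet|_{B^\bullet}\right)$ in $\overline{M}_{D_{1,6}}$.

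Because Remark~\ref{independentfromthechoiceofthefamily} asserts that $(X^\bullet,B^\bullet)$ depends only on $(X,B)$ and not on the chosen smoothing, the point $\lim(f\circ g)$ depends only on $\lim g$. Applying Theorem~\ref{lemmaextensionmapmodulispaces} yields the desired algebraic morphism $\overline{M}_Q^\nu\to\overline{M}_{D_{1,6}}$. The formula on $\Bbbk$-points follows by applying the argument above to any one-parameter smoothing of a given stable toric pair $(X,B)$; such a smoothing exists by the irreducibility of $\overline{M}_Q^\nu$. Finally, surjectivity is immediate: $\overline{M}_Q^\nu$ is projective (being the normalization of a projective scheme), so the image of the morphism is closed in $\overline{M}_{D_{1,6}}$, and it contains the dense open subset $f(\mathcal{U}_{\sm})$, whose closure is $\overline{M}_{D_{1,6}}$ by Definition~\ref{ksbacompactification}.

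The main technical obstacle has been defused in the preceding sections: constructing the modification $(X^\bullet,B^\bullet)$ coherently in one-parameter families, proving stability of the resulting pair (Theorem~\ref{stabilityofthemodifiedpair}), and establishing its independence from the smoothing (Remark~\ref{independentfromthechoiceofthefamily}). The only remaining delicacy is the passage from the coarse moduli space $\overline{M}_Q^\nu$ to an honest family — handled by the ramified base change alluded to above — after which the extension criterion applies cleanly.
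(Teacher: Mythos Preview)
Your proof is correct and follows essentially the same approach as the paper: both apply the Giansiracusa--Gillam extension criterion (Theorem~\ref{lemmaextensionmapmodulispaces}) to $f\colon\mathcal{U}_{\sm}\to\overline{M}_{D_{1,6}}$, verifying the limit condition by pulling back the universal family, performing the bullet modification of Section~\ref{themodifiedfamilywithbullet}, and invoking Remark~\ref{independentfromthechoiceofthefamily}. You are slightly more explicit than the paper about surjectivity and about the role of separatedness of the Viehweg functor, and slightly looser in writing $\Delta^\bullet|_{\mathfrak{B}^\bullet}$ for what the paper constructs as the closure $\overline{D}_K$ of the generic toric boundary inside $\mathfrak{X}^\bullet$, but these are cosmetic differences.
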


\begin{proof}
Let $X_1=\overline{\mathbf{M}}_Q$ and $X_2=\overline{\mathbf{M}}$. Let $U=\mathbf{A}_{\sm}$, where $\mathbf{A}_{\sm}$ is the open subset of the torus $\mathbf{A}\subseteq\overline{\mathbf{M}}_Q$ in the proof of Proposition~\ref{rationalmapfromstptoksba} parametrizing stable toric pairs $((\mathbb{P}^1)^3,B)$ with $B$ smooth. We obtain a morphism $X_1\rightarrow X_2$ by extending $f\colon U\rightarrow X_2$ to the whole $X_1$ using Theorem~\ref{lemmaextensionmapmodulispaces}.

Let $x\in X$ and let $(A,\mathfrak{m})$ be any DVR. Consider a map $g\colon\Spec(K)\rightarrow U$ such that $\lim g=x$. Let $(X,B)$ be the stable toric pair of type $Q$ parametrized by $x$. Denote by $\Delta$ the toric boundary of $X$. If we prove that $\lim(f\circ g)$ corresponds to the stable pair $\left(B^\bullet,\left(\frac{1+\epsilon}{2}\right)\Delta^\bullet|_{B^\bullet}\right)$, then we are done by Theorem~\ref{lemmaextensionmapmodulispaces}, because this shows that $\lim(f\circ g)$ only depends on $x$, and not from the choice of $g$ (see Remark~\ref{independentfromthechoiceofthefamily}).

Let $(U\times(\mathbb{P}^1)^3,\mathcal{S})\rightarrow U$ be the restriction to $U$ of the universal family of stable toric pairs over $\mathbf{A}$ constructed in the proof of Proposition~\ref{rationalmapfromstptoksba}. Let $((\mathbb{P}_K^1)^3,\mathfrak{B}^\circ)$ be the pullback of $(U\times (\mathbb{P}^1)^3,\mathcal{S})$ under the map $g$ and denote by $(\mathfrak{X},\mathfrak{B})$ its completion over $\Spec(A)$, or a finite ramified base change of it, as a family of stable toric pairs. Note that by construction the central fiber of $(\mathfrak{X},\mathfrak{B})$ is $(X,B)$. Consider $(\mathfrak{X}^\bullet,\mathfrak{B}^\bullet)$ and, if $D$ denotes the toric boundary of $(\mathbb{P}^1)^3$, let $\overline{D}_K$ be the closure of $D_K=D\times\Spec(K)$ in $\mathfrak{X}^\bullet$. Then the central fiber of $\left(\mathfrak{B}^\bullet,\left(\frac{1+\epsilon}{2}\right)\overline{D}_K|_{\mathfrak{B}^\bullet}\right)$, which is $\left(B^\bullet,\left(\frac{1+\epsilon}{2}\right)\Delta^\bullet|_{B^\bullet}\right)$ by construction, is the stable pair corresponding to $\lim(f\circ g)$.
\end{proof}

\begin{theorem}
\label{propertiesofthemapstptoksba}
Let $\overline{\mathbf{M}}_Q\rightarrow\overline{\mathbf{M}}$ be the morphism in Theorem~\ref{allyouneedtoknowaboutksbapoints}. Then the induced morphism $\overline{\mathbf{M}}_Q/\Sym(Q)\rightarrow\overline{\mathbf{M}}$ is an isomorphism.
\end{theorem}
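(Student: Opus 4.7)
The plan is to exhibit the induced morphism $\tilde{\pi}\colon \overline{M}_Q^\nu/\Sym(Q) \to \overline{M}_{D_{1,6}}^\nu$ and then show it is a finite birational morphism to a normal variety, so that Zariski's main theorem forces it to be an isomorphism.

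First I would make precise the $\Sym(Q)$-action. The symmetries of the unit cube act on stable toric pairs of type $\leq Q$ by precomposition with the corresponding toric automorphisms of $(\mathbb{P}^1)^3$, and this extends to an algebraic action on $\overline{M}_Q$ and on its normalization $\overline{M}_Q^\nu$. I would then verify that the morphism $\pi\colon \overline{M}_Q^\nu \to \overline{M}_{D_{1,6}}$ from Theorem~\ref{allyouneedtoknowaboutksbapoints} is $\Sym(Q)$-invariant: this is clear from the formula $(X,B) \mapsto \bigl(B^\bullet,\bigl(\tfrac{1+\epsilon}{2}\bigr)\Delta^\bullet|_{B^\bullet}\bigr)$, because both $B^\bullet$ and $\Delta^\bullet|_{B^\bullet}$ are intrinsic invariants of the pair with its toric boundary, and forgetting the torus action collapses precisely the $\Sym(Q)$-ambiguity of Proposition~\ref{mainconstruction}. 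Hence $\pi$ descends to $\bar\pi\colon \overline{M}_Q^\nu/\Sym(Q) \to \overline{M}_{D_{1,6}}$. Since $\overline{M}_Q^\nu$ is normal and $\Sym(Q)$ is finite, the geometric quotient is normal, so by the universal property of normalization $\bar\pi$ lifts uniquely to $\tilde\pi\colon \overline{M}_Q^\nu/\Sym(Q) \to \overline{M}_{D_{1,6}}^\nu$.

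Next, I would show that $\tilde\pi$ is surjective, birational, and finite. Surjectivity is immediate from the surjectivity of $\pi$ in Theorem~\ref{allyouneedtoknowaboutksbapoints}. For birationality, Observation~\ref{firstpropertiesofksbaspace} combined with Proposition~\ref{mainconstruction} tells us that over the dense open locus $f(\mathcal{U}_{\sm}) \subset \overline{M}_{D_{1,6}}$ the fibers of $\pi$ are exactly the $\Sym(Q)$-orbits of stable toric pairs with $X \cong (\mathbb{P}^1)^3$; thus $\tilde\pi$ is generically bijective, and since both sides are irreducible of dimension $4$ by Proposition~\ref{irreducibilitycoarsemoduliofstabletoricpairs} and Observation~\ref{firstpropertiesofksbaspace}, it is birational. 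For finiteness, $\tilde\pi$ is proper (both sides being projective), so it suffices to verify quasi-finiteness: each stable pair $\bigl(Y,\bigl(\tfrac{1+\epsilon}{2}\bigr) L\bigr)$ parametrized by $\overline{M}_{D_{1,6}}^\nu$ has only finitely many preimages modulo $\Sym(Q)$.

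The quasi-finiteness on the boundary is the main obstacle, and this is where I would expect the real work. One has to reconstruct $(X,B)$ up to $\Sym(Q)$ from $\bigl(Y,L\bigr)$. The strategy is to use the explicit list in Remark~\ref{explicitdescriptionirreduciblecomponentsdegenerations}: the combinatorial type of each irreducible component of $Y$ together with the structure of $L$ on it determines the shape of the polytope, and the gluings along the yellow curves determine how these polytopes fit together, so one recovers the polyhedral subdivision $\mathcal{P}^\bullet$ of $(Q, Q \cap \mathbb{Z}^3)$. The original subdivision $\mathcal{P}$ is obtained by reinserting corner cuts, which is an operation with only finitely many outcomes, and the coefficients of the defining section of $\mathcal{O}_X(B)$ are determined by the coefficients of $L$ up to the $\mathbb{G}_m^3$-rescaling of Lemma~\ref{importantfamily}, which is absorbed into the isomorphism class in $\overline{M}_Q^\nu$. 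Once quasi-finiteness is established, $\tilde\pi$ is a finite birational morphism to a normal variety, and Zariski's main theorem concludes the proof.
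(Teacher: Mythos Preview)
Your overall architecture---descend through the $\Sym(Q)$-quotient, lift to the normalization, establish generic bijectivity, prove quasi-finiteness, and invoke Zariski's Main Theorem---is exactly what the paper does. The divergence is entirely in how quasi-finiteness is argued, and your version has a gap there.

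You propose to reconstruct $(X,B)$ from $\bigl(B^\bullet,\Delta^\bullet|_{B^\bullet}\bigr)$ up to finite ambiguity. The problem is the coefficient step. When $\mathcal{P}$ has a corner cut, the apex coefficient of $B$ is set to zero in passing to $B^\bullet$ (see Section~\ref{themodifiedfamilywithbullet}), so it cannot be read off $L$ at all. Your appeal to ``the $\mathbb{G}_m^3$-rescaling of Lemma~\ref{importantfamily}'' does not close this: that lemma concerns only the trivial subdivision. For a subdivision with an extra maximal cell, the $T$-equivariant automorphism group of the polarized stable toric variety is strictly larger than $T$---it is the fiber product of the tori of the pieces over the torus of the shared facet---and it is precisely this extra $\mathbb{G}_m$ that rescales the apex coefficient away. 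Unless you make this explicit, your reconstruction leaves an apparent one-parameter family of preimages whenever corner cuts are present. Separately, recovering $\mathcal{P}^\bullet$ from the component decomposition of $Y$ is not automatic when some $B_P^\bullet$ is itself reducible, as in cases $(c3)$, $(d2)$, $(d3)$ of Remark~\ref{explicitdescriptionirreduciblecomponentsdegenerations}; you would need to explain how to regroup the irreducible pieces of $Y$ into the correct $B_P^\bullet$'s.

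The paper avoids both issues by reducing to a finite check: it suffices that no $1$-dimensional boundary stratum of $\overline{M}_Q^\nu$ is contracted. These strata correspond to the minimal non-triangulation subdivisions of $(Q,Q\cap\mathbb{Z}^3)$, and each such subdivision must contain one of the three polytopes in Figure~\ref{threepolytopesinonedimensionalsubdivision}. On the irreducible component of $B^\bullet$ corresponding to that polytope one sees directly that the divisor varies in a genuine one-parameter family, so the image is one-dimensional. This bypasses the apex-coefficient bookkeeping and the component-grouping problem entirely.
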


\begin{proof}
The group $\Sym(Q)$ acts on $\overline{\mathbf{M}}_Q$ because $\overline{\mathbf{M}}_Q$ is the projective toric variety associated to the secondary polytope of $(Q,Q\cap\mathbb{Z}^3)$. The modular interpretation of the action is the following: $\Sym(Q)$ acts on the stable toric pair $(X,B)$ by changing the torus action on it. In particular, the morphism $\overline{\mathbf{M}}_Q\rightarrow\overline{\mathbf{M}}$ is $\Sym(Q)$-equivariant. Therefore, we have an induced morphism $\overline{\mathbf{M}}_Q/\Sym(Q)\rightarrow\overline{\mathbf{M}}$.

The fibers of the restriction of $\overline{\mathbf{M}}_Q\rightarrow\overline{\mathbf{M}}$ to $\mathbf{A}_{\sm}$ are exactly $\Sym(Q)$-orbits by Proposition~\ref{mainconstruction}, and hence the morphism $\overline{\mathbf{M}}_Q/\Sym(Q)\rightarrow\overline{\mathbf{M}}$ is bijective on a dense open subset. In what follows, we show that $\overline{\mathbf{M}}_Q\rightarrow\overline{\mathbf{M}}$ is quasi-finite, which implies that $\overline{\mathbf{M}}_Q/\Sym(Q)\rightarrow\overline{\mathbf{M}}$ is an isomorphism by Zariski's Main Theorem.

To prove that $\overline{\mathbf{M}}_Q\rightarrow\overline{\mathbf{M}}$ is quasi-finite, it is enough to check that no $1$-dimensional boundary stratum of $\overline{\mathbf{M}}_Q$ is contracted. These strata correspond to the minimal elements of the poset of regular polyhedral subdivisions of $(Q,Q\cap\mathbb{Z}^3)$ which are not triangulations (recall that the polyhedral subdivisions of $(Q,Q\cap\mathbb{Z})$ are regular, see the proof of Proposition~\ref{irreducibilitycoarsemoduliofstabletoricpairs}). If $\mathcal{P}$ is one of these polyhedral subdivisions, then it contains a subpolytope of $Q$ with vertices in $Q\cap\mathbb{Z}^3$ which can be subdivided further only once. A simple enumeration shows that $\mathcal{P}$ has to contain one of the polytopes listed in Figure~\ref{threepolytopesinonedimensionalsubdivision}. Denote by $P$ one of such polytopes and let $(X,B)$ be a stable toric pair with $\mathcal{P}$ as corresponding polyhedral subdivision of $(Q,Q\cap\mathbb{Z}^3)$. Then, as $(X,B)$ varies among the stable toric pairs parametrized by the $1$-dimensional boundary stratum corresponding to $\mathcal{P}$, $(B^\bullet,\left(\frac{1+\epsilon}{2}\right)\Delta^\bullet|_{B^\bullet})$ describes a $1$-dimensional family of stable pairs. To see this, if $P^\bullet\in\mathcal{P}^\bullet$ is the polytope corresponding to $P\in\mathcal{P}$, then the irreducible component of $(B^\bullet,\left(\frac{1+\epsilon}{2}\right)\Delta^\bullet|_{B^\bullet})$ corresponding to $P^\bullet$ shows the variation of this parameter. In conclusion, the $1$-dimensional boundary stratum corresponding to $\mathcal{P}$ is not contracted.
\end{proof}

\begin{figure}
\centering
\includegraphics[scale=0.40,valign=t]{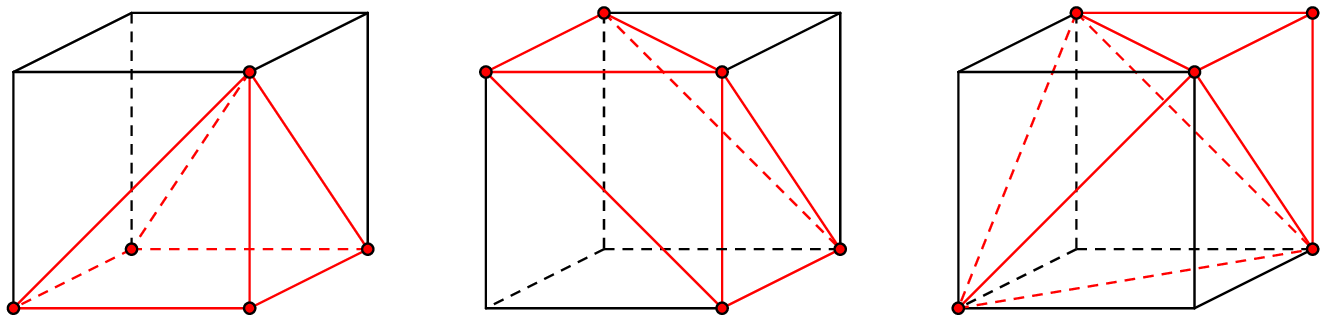}
\caption{Subpolytopes of $Q$ which can be subdivided further only once in the proof of Theorem~\ref{propertiesofthemapstptoksba}.}
\label{threepolytopesinonedimensionalsubdivision}
\end{figure}


\subsection{Description of the boundary of $\overline{\mathbf{M}}$}
\label{globalgeography}

\begin{definition}
The \emph{boundary} of the moduli space $\overline{\mathbf{M}}$ is the closed subset whose $\mathbb{C}$-points parametrize stable pairs $(B,D)$ where $B$ is reducible. Let $(B,D)$ be a stable pair parametrized by the boundary, and consider the locus of points in $\overline{\mathbf{M}}$ parametrizing stable pairs $(B',D')$ such that $B\cong B'$. We call the closure of such locus a \emph{stratum}.
\end{definition}

\begin{theorem}
\label{combinatoricsstratificationstatement}
The boundary of $\overline{\mathbf{M}}$ is stratified as shown in Figure~\ref{explicitstratification}. For each stratum we show the degeneration of $(\Bl_3\mathbb{P}^2,\frac{1+\epsilon}{2}\sum_{i=1}^3(\ell_i+\ell_i'))$ parametrized by a general point in the stratum. The strata are organized from bottom to top in increasing order of dimension. In particular, the boundary consists of three irreducible components: two divisors $\mathbf{D}_1,\mathbf{D}_2$ and a curve $\mathbf{C}$ with $\mathbf{D}_1\cap\mathbf{C}=\emptyset$. If $Q$ is the unit cube, the strata containing the leftmost $0$-dimensional stratum correspond bijectively to the polyhedral subdivisions of $(Q,Q\cap\mathbb{Z}^3)$ without corner cuts up to symmetries of $Q$.
\end{theorem}

\begin{proof}
By Theorem~\ref{propertiesofthemapstptoksba} any stable pair parametrized by $\overline{\mathbf{M}}$ is in the form $\left(B^\bullet,\left(\frac{1+\epsilon}{2}\right)\Delta^\bullet|_{B^\bullet}\right)$ for some stable toric pair $(X,B)$ parametrized by $\overline{\mathbf{M}}_Q$. In Remark~\ref{explicitdescriptionirreduciblecomponentsdegenerations} we listed all the possibilities for the pairs $\left(B_P^\bullet,D_P^\bullet|_{B_P^\bullet}+\left(\frac{1+\epsilon}{2}\right)(\Delta_P^\bullet-D_P^\bullet)|_{B_P^\bullet}\right)$, which can be glued together for $P\in\mathcal{P}^\bullet$ to recover $\left(B^\bullet,\left(\frac{1+\epsilon}{2}\right)\Delta^\bullet|_{B^\bullet}\right)$, where $\mathcal{P}^\bullet$ is a polyhedral subdivisions of $(Q,Q\cap\mathbb{Z}^3)$ without corner cuts. These subdivisions are shown in Figure~\ref{subdivisionscubewithoutcornercuts} up to symmetries of $Q$. The end result is shown in Figure~\ref{explicitstratification}, where for each stratum we show the stable pair parametrized by the general point in the stratum. The three colors used to draw the divisor have the following meaning: lines sharing the same color come from the same pair of lines on the original $\Bl_3\mathbb{P}^2$. Note that, even though we use three different colors, we do not distinguish the three pairs because we quotiented by $S_3$ in our definition of $\overline{\mathbf{M}}$. Thickened segments indicate lines along which two irreducible components are glued together. The claimed combinatorial interpretation of the strata containing the leftmost $0$-dimensional stratum follows after comparing Figure~\ref{subdivisionscubewithoutcornercuts} and Figure~\ref{explicitstratification}.
\end{proof}

\begin{figure}
\centering
\includegraphics[scale=0.25,valign=t]{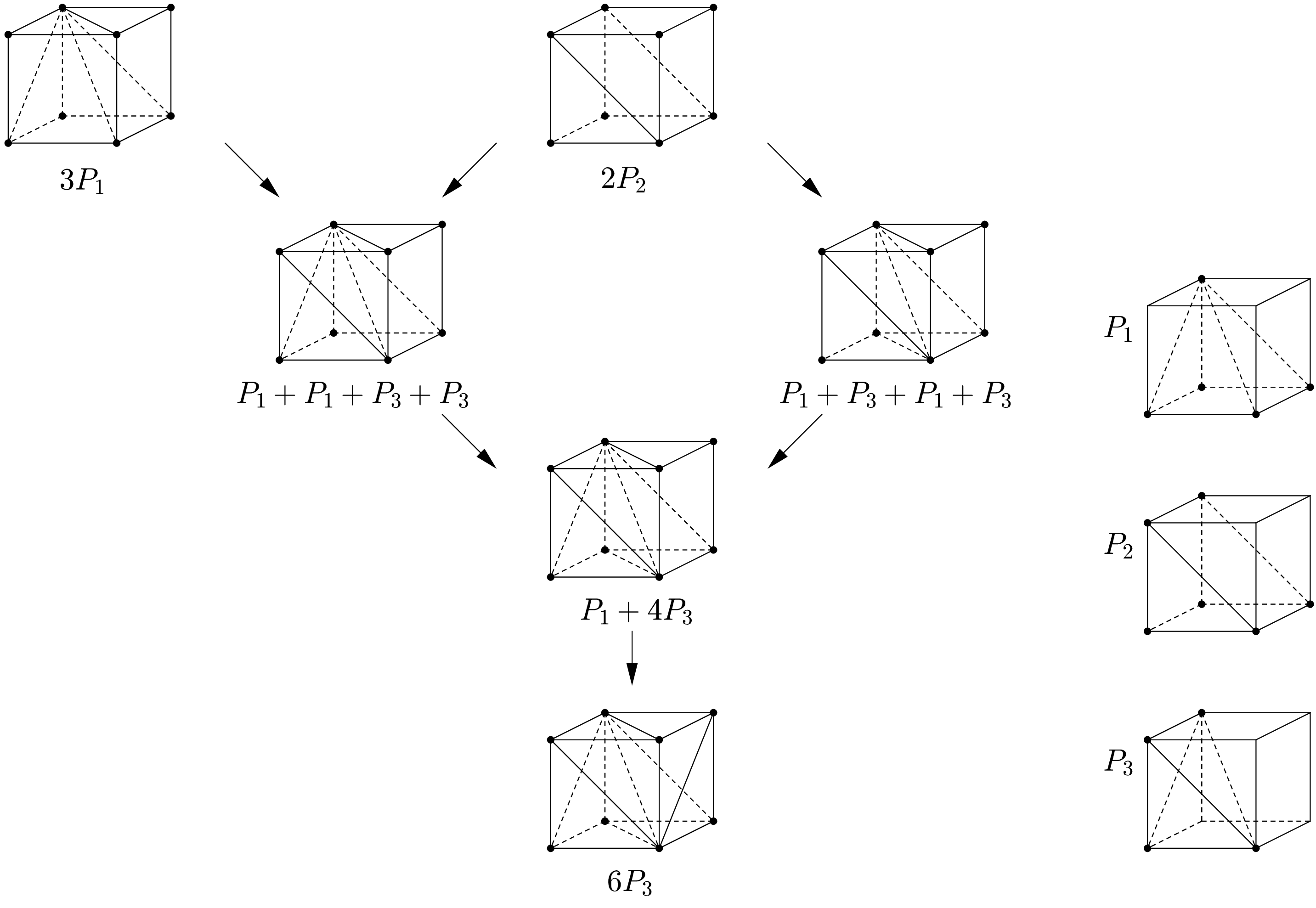}
\caption{Polyhedral subdivisions of $(Q,Q\cap\mathbb{Z}^3)$ without corner cuts up to symmetry and ordered by refinement. The arrows indicate refinement.}
\label{subdivisionscubewithoutcornercuts}
\end{figure}

\begin{figure}
\centering
\includegraphics[scale=0.40,valign=t]{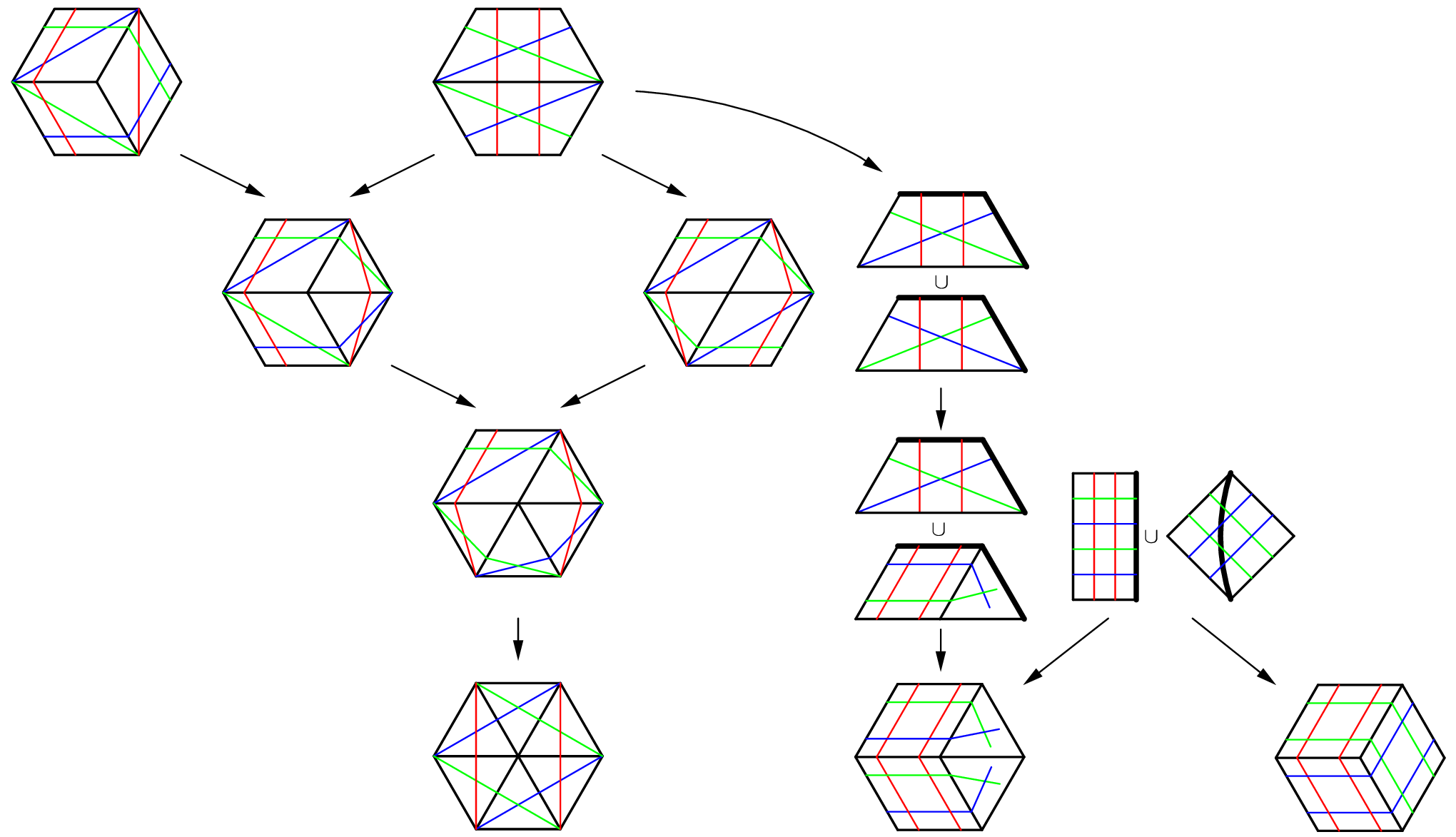}
\caption{Stratification of the boundary of $\overline{\mathbf{M}}$ and degenerations of $\Bl_3\mathbb{P}^2$ with the three pairs of weighted lines parametrized by it. The symbol ``$\cup$'' between two surfaces indicates that the two surfaces are glued together along the corresponding thickened curves. The arrows indicate specialization.}
\label{explicitstratification}
\end{figure}

\begin{remark}
\label{comparingdegenofsixlines}
We compare the generic degenerations of $\Bl_3\mathbb{P}^2$ together with the three pairs of weighted lines $\frac{1+\epsilon}{2}\sum_{i=1}^3(\ell_i+\ell_i')$ parametrized by the boundary of $\overline{\mathbf{M}}$ with the corresponding stable degenerations of six lines $\frac{1+\epsilon}{2}\sum_{i=1}^3(\overline{\ell}_i+\overline{\ell}_i')$ in $\mathbb{P}^2$ after contracting the three disjoint exceptional curves. Let $\mathbf{D}_1,\mathbf{D}_2$, and $\mathbf{C}$ be the irreducible boundary components of $\overline{\mathbf{M}}$ in Theorem~\ref{combinatoricsstratificationstatement}. Recall that the generic degenerations parametrized by these are in Figure~\ref{explicitstratification}.
\begin{enumerate}

\item The degeneration parametrized generically by $\mathbf{D}_1$ is obtained by letting the lines $\ell_1,\ell_2,\ell_3$ break into two $(-1)$-curves, so that in the limit there are two double $(-1)$-curves. This is illustrated in the top-left of Figure~\ref{comparisonwithsixlinesinP2}. The corresponding degeneration of $\mathbb{P}^2$ has $\overline{\ell}_1=\overline{\ell}_2$ and $\ell_3$ passes through $\overline{\ell}_1\cap\overline{\ell}_1'$. Its stable replacement is \cite[Figure~5.12~(11)]{Ale15}.

\item The degeneration parametrized generically by $\mathbf{D}_2$ is obtained by letting the lines $\ell_1,\ell_2$ break into the gluing of two $(-1)$-curves, so that in the limit there is a double $(-1)$-curve. This is illustrated in the top-middle of Figure~\ref{comparisonwithsixlinesinP2}. The corresponding degeneration of $\mathbb{P}^2$ has $\overline{\ell}_1=\overline{\ell}_2$, and its stable replacement is \cite[Figure~5.12~(5)]{Ale15}.

\item The degeneration parametrized generically by $\mathbf{C}$ is obtained by letting two lines in one pair collide, say $\ell_1=\ell_1'$ (see the top-right of Figure~\ref{comparisonwithsixlinesinP2}). Then the corresponding degeneration of $\mathbb{P}^2$ has $\overline{\ell}_1=\overline{\ell}_1'$, and the stable replacement is again \cite[Figure~5.12~(5)]{Ale15}.

\end{enumerate}
\end{remark}

\begin{figure}
\centering
\includegraphics[scale=0.40,valign=t]{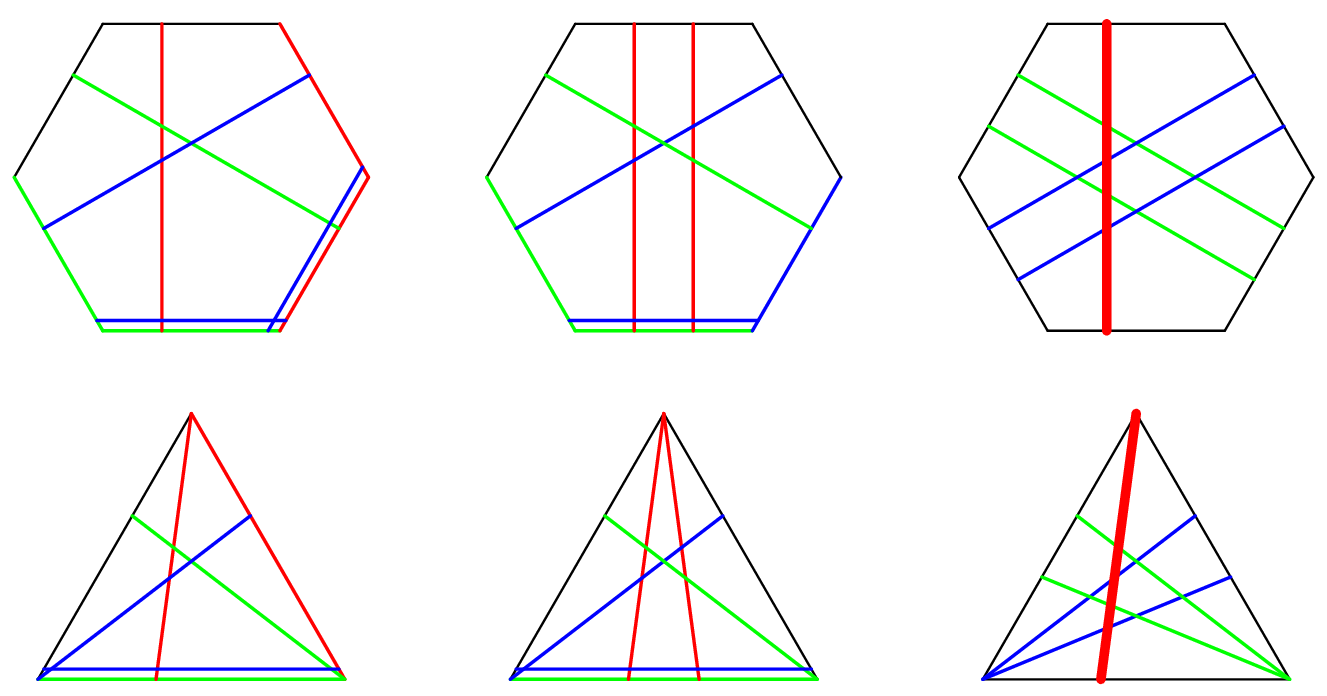}
\caption{Degenerations of line arrangements in $\Bl_3\mathbb{P}^2$ and $\mathbb{P}^2$ discussed in Remark~\ref{comparingdegenofsixlines}.}
\label{comparisonwithsixlinesinP2}
\end{figure}


\subsection{Degenerations of Enriques surfaces}
\label{studyofthecorrespondingz22cover}
After describing the degenerations of $\Bl_3\mathbb{P}^2$ together with the three pairs of weighted lines parametrized by $\overline{\mathbf{M}}$, we discuss the overlying $\mathbb{Z}_2^2$-covers, which correspond to degenerations of Enriques surfaces.

\begin{enumerate}

\item Consider the stable pair parametrized by the rightmost $0$-dimensional stratum in Figure~\ref{explicitstratification}. Then its $\mathbb{Z}_2^2$-cover is isomorphic to the quotient of
\begin{equation*}
V((X_0^2-X_1^2)(Y_0^2-Y_1^2)(Z_0^2-Z_1^2))\subseteq(\mathbb{P}^1)^3,
\end{equation*}
by the involution $\iota$ (see Definition~\ref{familyD16polarizedEnriquessurfaces} and the proof of Proposition~\ref{ddescription}). This quotient consists of three copies of $\mathbb{P}^1\times\mathbb{P}^1$ glued along rulings in such a way that its dual complex gives a triangulation of the real projective plane.

\item The stable pair parametrized by a general point in $\mathbf{C}$ has $\mathbb{Z}_2^2$-cover isomorphic to the quotient of
\begin{equation*}
V((X_0^2-X_1^2)(Y_0^2Z_0^2+Y_1^2Z_0^2+Y_0^2Z_1^2+\lambda Y_1^2Z_1^2))\subseteq(\mathbb{P}^1)^3,\lambda\neq0,1,
\end{equation*}
by $\iota$ (see Definition~\ref{familyD16polarizedEnriquessurfaces} and the proof of Proposition~\ref{ddescription}). If we define
\begin{equation*}
E=V(Y_0^2Z_0^2+Y_1^2Z_0^2+Y_0^2Z_1^2+\lambda Y_1^2Z_1^2)\subseteq\mathbb{P}^1\times\mathbb{P}^1,
\end{equation*}
then the irreducible components of this $\mathbb{Z}_2^2$-cover are a copy of $\mathbb{P}^1\times\mathbb{P}^1$, and an elliptic fibration $F$ over $\mathbb{P}^1$ with fibers isomorphic to $E$ and two double fibers. These surfaces are glued along $E\subseteq\mathbb{P}^1\times\mathbb{P}^1$ and a reduced fiber of $F$.

\item Consider the stable pair parametrized by a general point in the boundary divisor $\mathbf{D}_2$. Let us describe the $\mathbb{Z}_2^2$-cover $X$ of one of the two irreducible components, which are both isomorphic to $\mathbb{F}_1$. Let $h$ be a section of self-intersection $1$ and $f$ a fiber. Then the building data for the cover $\pi\colon X\rightarrow\mathbb{F}_1$ is given by $D_a\sim2f+h,D_b\sim D_c\sim h$, implying that
\begin{equation*}
K_X\sim_\mathbb{Q}\pi^*\left(K_{\mathbb{F}_1}+\frac{1}{2}(D_a+D_b+D_c)\right)\sim_\mathbb{Q}-\frac{1}{2}\pi^*(h).
\end{equation*}
This shows that $-K_X$ is big, nef, and $K_X^2=1$. Therefore, $X$ is a weak del Pezzo surface of degree $1$. The total degeneration is given by two of such weak del Pezzo surfaces glued along an elliptic curve.

\item Let $X$ be the $\mathbb{Z}_2^2$-cover of the $\mathbb{P}^1\times\mathbb{P}^1$ in Figure~\ref{explicitdegenerationsinvariantwallcrossing} $(b)$. Denote by $\ell_1$ and $\ell_2$ two incident rulings. Then the building data for the cover $\pi\colon X\rightarrow\mathbb{P}^1\times\mathbb{P}^1$ is given by $D_a\sim D_b\sim D_c\sim\ell_1+\ell_2$. From this we obtain that $K_X\sim_\mathbb{Q}-\frac{1}{2}\pi^*(\ell_1+\ell_2)$. It follows that $-K_X$ is ample and $K_X^2=2$. Hence, $X$ is a del Pezzo surface of degree $2$. Gluing together three of these gives the surface parametrized by a general point in $\mathbf{D}_1$.

\item Let us describe the $\mathbb{Z}_2^2$-cover of the $\mathbb{P}^2$ in Figure~\ref{explicitdegenerationsinvariantwallcrossing} $(a)$. If $\ell$ denotes a line in $\mathbb{P}^2$, then the building data for the cover $\pi\colon X\rightarrow\mathbb{P}^2$ is $D_a\sim D_b\sim2\ell,D_c\sim0$. Therefore, $K_X\sim_\mathbb{Q}-\pi^*(\ell)$, implying that $-K_X$ is ample and $K_X^2=4$. Hence, $X$ is a del Pezzo surface of degree $4$.

\item Let us describe the $\mathbb{Z}_2^2$-cover of the $\mathbb{P}^1\times\mathbb{P}^1$-component in Figure~\ref{explicitdegenerationsinvariantwallcrossing} $(c3)$. If $\ell_1$ and $\ell_2$ denote two incident rulings, then the building data for the cover $\pi\colon X\rightarrow\mathbb{P}^1\times\mathbb{P}^1$ is given by $D_a\sim2\ell_1+\ell_2,D_b\sim D_c\sim\ell_2$. Therefore, $K_X\sim_\mathbb{Q}-\frac{1}{2}\pi^*(2\ell_1+\ell_2)$, $-K_X$ is ample, $K_X^2=4$, and $X$ is a del Pezzo surface of degree $4$.

\end{enumerate}

We summarize part (2), (3), and (4) of the calculations above in the next corollary.

\begin{corollary}
\label{generaldegenerationsofenriques}
Let $\mathbf{D}_1,\mathbf{D}_2$, and $\mathbf{C}$ be the irreducible components of the boundary of $\overline{\mathbf{M}}$ as in Theorem~\ref{combinatoricsstratificationstatement}. Then the surfaces parametrized by the general point of $\mathbf{D}_1,\mathbf{D}_2$, and $\mathbf{C}$ are respectively
\begin{enumerate}
\item the gluing of three del Pezzo surfaces of degree $2$ so that the dual complex is a $2$-simplex and the double locus consists of three smooth rational curves;
\item the gluing of two weak del Pezzo surfaces of degree $1$ along an elliptic curve;
\item the gluing of $\mathbb{P}^1\times\mathbb{P}^1$ and an elliptic ruled surface along a $(2,2)$ curve and a reduced fiber.
\end{enumerate}
\end{corollary}

\begin{remark}
A Coble surface is a smooth rational projective surface $X$ with $|-K_X|=\emptyset$ and $|-2K_X|\neq\emptyset$ (see \cite{DZ01}). These are related to our degenerations of Enriques surfaces as follows. Let $S$ be the appropriate $\mathbb{Z}_2^2$-cover of $\Bl_3\mathbb{P}^2$ branched along $\sum_{i=1}^3(\ell_i+\ell_i')$, and assume that this line configuration has exactly one triple intersection point. Then $S$ has a quotient singularity of type $\frac{(1,1)}{4}$ over this triple intersection point, and the minimal resolution $\widetilde{S}$ of $S$ is a Coble surface. This follows from Castelnuovo rationality criterion, and from $K_{\widetilde{S}}=-\frac{1}{2}E$, where $E$ is the exceptional divisor over the quotient singularity. The Zariski closure in $\overline{\mathbf{M}}$ of the locus of points parametrizing these surfaces $S$ defines a divisor.
\end{remark}


\section{Morphism from KSBA to Baily--Borel compactification}
\label{relationwithbailyborel}
In what follows we construct a morphism from the KSBA compactification $\overline{\mathbf{M}}$ to the Baily--Borel compactification of $\mathcal{D}/\Gamma$, where $\mathcal{D}$ is the period domain parametrizing $D_{1,6}$-polarized Enriques surfaces (details in \S\ref{relationksbabailyborel}). \S\ref{generalizedtypestudy} contains a technical result which is fundamental to construct such morphism.


\subsection{Generalized type of degenerations of stable K3 surface pairs}
\label{generalizedtypestudy}

\begin{remark}
In \S\ref{relationwithbailyborel} our focus moves from Enriques surfaces to K3 surfaces. The reason is that in Theorem~\ref{morphismtobailyborelcompactification} we compute the limits of one-parameter families of $D_{1,6}$-polarized Enriques surfaces in the Baily--Borel compactification of $\mathcal{D}/\Gamma$. This is done by considering the corresponding K3 covers.
\end{remark}

Let $\Delta$ be the unit disk $\{t\in\mathbb{C}\mid|t|<1\}$ and let $\Delta^*=\Delta\setminus\{0\}$. We are interested in proper flat families $\mathfrak{X}^*\rightarrow\Delta^*$ with $\mathfrak{X}^*$ smooth and such that the fiber $\mathfrak{X}_t^*$ is a smooth K3 surface for all $t\in\Delta^*$. Equip $\mathfrak{X}^*$ with an effective relative Cartier divisor $\mathcal{H}^*$ such that $(\mathfrak{X}^*,\mathcal{H}^*)\rightarrow\Delta^*$ is a family of stable pairs. Let $\mathfrak{X}$ be a semistable degeneration with $K_\mathfrak{X}\sim0$ completing $\mathfrak{X}^*$ over $\Delta$ (see \cite{Kul77,PP81}). We call $\mathfrak{X}\rightarrow\Delta$ Kulikov degeneration for short. Recall that the central fiber $\mathfrak{X}_0$ can be of type I, II or III (see \cite[Theorem II]{Kul77}). In type II, denote by $j(\mathfrak{X}_0)$ the $j$-invariant of one of the mutually isomorphic elliptic double curves in $\mathfrak{X}_0$. Then define $\mathcal{H}$ to be the closure of $\mathcal{H}^*$ inside $\mathfrak{X}$ (note that $\mathcal{H}$ is flat over $\Delta$). On the other hand, we can define a second completion of $(\mathfrak{X}^*,\mathcal{H}^*)$ over $\Delta$, which we denote by $(\mathfrak{X}',\mathcal{H}')$, such that $(\mathfrak{X}_0',\epsilon\mathcal{H}_0')$ is a stable pair for $0<\epsilon\ll1$.

\begin{definition}
With the notation introduced above, define the \emph{dual graph} of $\mathfrak{X}_0'$ as follows. Draw a vertex $v_i$ for each irreducible component $V_i$ of $\mathfrak{X}_0'$. Then, given any two distinct irreducible components $V_i$ and $V_j$, draw one edge between $v_i$ and $v_j$ for each irreducible curve in $V_i\cap V_j$. If an irreducible component $V_i$ self-intersects along a curve $C$, then draw one loop on $v_i$ for each irreducible component of $C$. Denote by $G(\mathfrak{X}_0')$ the dual graph of $\mathfrak{X}_0'$.
\end{definition}

\begin{definition}
\label{ksbatype}
Let $\mathfrak{X}'$ as defined above. We say that $\mathfrak{X}_0'$ has \emph{generalized type I, II}, or \emph{III} if the following hold:
\begin{itemize}
\item Type I: $G(\mathfrak{X}_0')$ consists of one vertex and $\mathfrak{X}_0'$ has at worst Du Val singularities;
\item Type II: $G(\mathfrak{X}_0')$ is a chain and $\mathfrak{X}_0'$ has at worst elliptic singularities. If there are at least two vertices and the double curves are mutually isomorphic elliptic curves, then denote by $j(\mathfrak{X}_0')$ the $j$-invariant of one of these;
\item Type III: otherwise.
\end{itemize}
\end{definition}

The proof of the following theorem, which builds upon the proof of \cite[Theorem 2.9]{Laz16}, was communicated to me by Valery Alexeev.

\begin{theorem}
\label{goodbehaviortypeofdegeneration}
With the notation introduced above, $\mathfrak{X}_0$ and $\mathfrak{X}_0'$ have the same type. In addition, if $\mathfrak{X}_0,\mathfrak{X}_0'$ have type II and $j(\mathfrak{X}_0')$ can be defined, then $j(\mathfrak{X}_0)=j(\mathfrak{X}_0')$.
\end{theorem}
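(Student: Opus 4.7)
The plan is to compare $\mathfrak{X}$ and $\mathfrak{X}'$ via the relative $(K_\mathfrak{X}+\epsilon\mathcal{H})$-MMP over $\Delta$. Since $(\mathfrak{X}',\epsilon\mathcal{H}')$ is by construction the relative log canonical model of $(\mathfrak{X},\epsilon\mathcal{H})$ over $\Delta$, and log canonical models are unique when they exist, $\mathfrak{X}'$ is obtained from $\mathfrak{X}$ as the output of a sequence of divisorial contractions and flips of $(K_\mathfrak{X}+\epsilon\mathcal{H})$-negative extremal rays. The hypothesis $K_\mathfrak{X}\sim 0$ forces every such extremal ray $R$ to satisfy $\mathcal{H}\cdot R<0$, so its support is contained in $\mathfrak{X}_0$; in particular the birational map $\psi\colon\mathfrak{X}\dashrightarrow\mathfrak{X}'$ is an isomorphism over $\Delta^*$ and only modifies the central fiber.

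From here I would proceed by case analysis on the Kulikov type of $\mathfrak{X}_0$. In type I, $\mathfrak{X}_0$ is a smooth K3 surface, so the only $(K_\mathfrak{X}+\epsilon\mathcal{H})$-negative curves in it are $(-2)$-configurations disjoint from $\mathcal{H}$, which contract to Du Val points; thus $G(\mathfrak{X}_0')$ is a single vertex and $\mathfrak{X}_0'$ has at worst Du Val singularities, giving generalized type~I. In type II, $\mathfrak{X}_0=V_0\cup\cdots\cup V_k$ is a chain whose double curves $C_i=V_{i-1}\cap V_i$ are mutually isomorphic elliptic curves of common $j$-invariant $j(\mathfrak{X}_0)$. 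The MMP can contract subchains of components on which $\mathcal{H}$ is numerically trivial; contracting an interior $V_i$ to its base elliptic curve reglues $V_{i-1}$ and $V_{i+1}$ along a curve still isomorphic to $C_i$, which keeps $G(\mathfrak{X}_0')$ a chain, preserves the $j$-invariant of the surviving double curves, and introduces at worst elliptic singularities along the identified curves. Hence $\mathfrak{X}_0'$ is of generalized type~II, and $j(\mathfrak{X}_0')=j(\mathfrak{X}_0)$ whenever the former is defined. In type III, $G(\mathfrak{X}_0)$ triangulates $S^2$, and one needs to show that no admissible MMP collapses this sphere to a segment or a point while keeping a semi-log-canonical pair.

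The main obstacle is precisely the type~III step: one must prevent the sphere-like dual complex from degenerating under contractions. The cleanest route, in the spirit of Stepanov and Payne, is via homotopy invariance of the dual complex of a log resolution: the dual complex of any log resolution of $\mathfrak{X}_0'$ is homotopy equivalent to the dual complex $G(\mathfrak{X}_0)$ of the Kulikov central fiber. In generalized types~I and~II this complex is contractible or a segment, hence $H^2=0$; in the original type~III it is homotopy equivalent to $S^2$, with $H^2\neq 0$. This topological invariant survives all the MMP modifications, forcing the generalized type of $\mathfrak{X}_0'$ to match the Kulikov type of $\mathfrak{X}_0$ in all three cases. Together with the explicit description of the type~II contractions above, this yields both the equality of types and the preservation of the $j$-invariant, as in \cite[Theorem~2.9]{laza}.
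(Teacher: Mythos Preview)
Your overall strategy---reach $\mathfrak{X}'$ from the Kulikov model by running the relative $(K_\mathfrak{X}+\epsilon\mathcal{H})$-MMP and then analyze each Kulikov type---is a legitimate alternative to the paper's argument, which instead follows Laza's explicit two-step procedure: first replace $\mathfrak{X}$ by another Kulikov model (via elementary modifications of types 0, I, II, blow-ups along double curves or triple points, and base changes) so that $\mathcal{H}$ is nef and contains no double curves or triple points; second, contract via $|n\mathcal{H}|$ using Shepherd-Barron's theorem. Each operation in Step~1 visibly preserves the Kulikov type and, in type~II, the isomorphism class of the double curves; in Step~2 the map is a morphism contracting components and curves, so by the very definition of generalized type nothing changes. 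The paper's route is shorter precisely because it never analyzes individual MMP steps.

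Your execution, however, has gaps. In type~I you say the $(K_\mathfrak{X}+\epsilon\mathcal{H})$-negative curves are $(-2)$-configurations \emph{disjoint} from $\mathcal{H}$; this is backwards. Since $K_\mathfrak{X}\sim 0$, a curve $C\subset\mathfrak{X}_0$ is $(K_\mathfrak{X}+\epsilon\mathcal{H})$-negative iff $\mathcal{H}\cdot C<0$, which forces $C$ to be a \emph{component} of $\mathcal{H}_0$; such curves are flipped, not contracted, and you then still owe the argument that the final model has only Du Val singularities. In type~II you treat only divisorial contractions of whole ruled components and say nothing about flips, which can a priori alter the gluing along the double curves. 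In type~III the dual-complex argument is not well posed as written: $G(\mathfrak{X}_0')$ is a graph, hence always has $H^2=0$, so the invariant you want is not $H^2$ of $G(\mathfrak{X}_0')$ but rather the homotopy type of the dual complex of a dlt model of the pair $(\mathfrak{X}',\mathfrak{X}_0')$; you would need to explain why this coincides with that of $(\mathfrak{X},\mathfrak{X}_0)$ given that $\mathfrak{X}_0$ is not part of the boundary of the pair $(\mathfrak{X},\epsilon\mathcal{H})$ on which you run the MMP. None of these are unfixable, but as it stands the proposal is a sketch rather than a proof; the paper's approach sidesteps all of this by working only with operations that are transparently type-preserving.
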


\begin{proof}
The proof of \cite[Theorem 2.9]{Laz16} describes a procedure to construct the unique stable model $(\mathfrak{X}',\epsilon\mathcal{H}')$ by modifying $(\mathfrak{X},\mathcal{H})$. This procedure consists of the following two steps:
\begin{itemize}
\item Step $1$: Replace $\mathfrak{X}$ with another Kulikov degeneration such that $\mathcal{H}$ is nef and does not contain double curves or triple points. This may involve, among other things, base changes;
\item Step $2$: For $n\geq4$, the line bundle $\mathcal{O}_\mathfrak{X}(n\mathcal{H})$ induces a birational morphism $(\mathfrak{X},\mathcal{H})\rightarrow(\mathfrak{X}',\mathcal{H}')$ where $\mathfrak{X}'=\Proj_\Delta\left(\bigoplus_{n\geq0}\mathcal{O}_\mathfrak{X}(n\mathcal{H})\right)$ (see \cite[Theorem 2, part (i)]{SB83}). This birational morphism contracts some components or curves in the central fiber $\mathfrak{X}_0$.
\end{itemize}

In step $1$, the new Kulikov degeneration is obtained from the one we started by applying elementary modifications of type $0$, I, II (see \cite[pages 12--15]{FM83} for their definitions), base changes, and blow ups of $\mathfrak{X}_0$ along double curves or triple points. The elementary modifications and the blow ups do not change the type of the central fiber. A description of how the central fiber is modified after a base change can be found in \cite{Fri83}, and also in this case the type does not change. In step $2$, it follows from our definition of generalized type that $\mathfrak{X}_0'$ has the same type as $\mathfrak{X}_0$.

This shows that if $\mathfrak{X}_0$ has type I, II, or III, then $\mathfrak{X}_0'$ has type I, II, or III respectively. The converse follows from this and from the uniqueness of the stable model. The claim about the $j$-invariants also follows from our discussion.
\end{proof}


\subsection{Map to the Baily--Borel compactification}

\label{relationksbabailyborel}
Let $L$ be the lattice $\langle2\rangle^{\oplus2}\oplus\langle-1\rangle^{\oplus4}$. By \cite[\S4]{Oud10}, we have that
\[
\mathcal{D}=\{[v]\in\mathbb{P}(L\otimes\mathbb{C})\mid v\cdot\overline{v}>0~\textrm{and}~v^2=0\}
\]
is the period domain for $D_{1,6}$-polarized Enriques surfaces. If $\Gamma$ denotes the isometry group of $L$, then the Baily--Borel compactification $\overline{\mathcal{D}/\Gamma}^*$ was studied in \cite[\S7]{Oud10}. More precisely, the boundary of $\overline{\mathcal{D}/\Gamma}^*$ consists of two rational $1$-cusps and three $0$-cusps. The $1$-cusps are called even and odd. The $0$-cusps are called even, odd of type $1$, and odd of type $2$. These are arranged as shown in Figure~\ref{pictureboundarybailyborel}. In Theorem~\ref{morphismtobailyborelcompactification} we show there exists a birational morphism $\overline{\mathbf{M}}\rightarrow\overline{\mathcal{D}/\Gamma}^*$. Before this we need a preliminary lemma.

\begin{figure}[H]
\centering
\includegraphics[scale=0.35,valign=t]{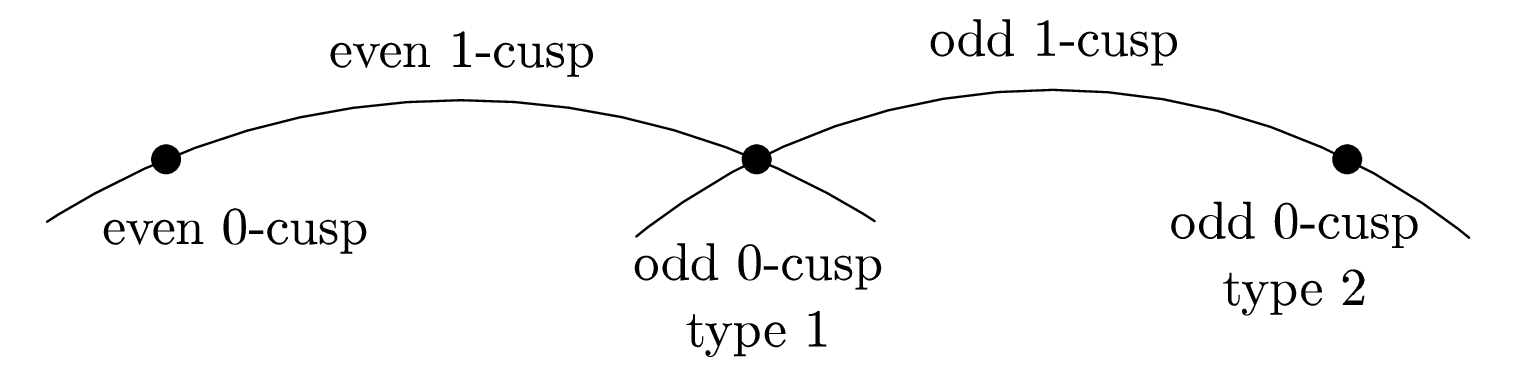}
\caption{Boundary of the Baily--Borel compactification $\overline{\mathcal{D}/\Gamma}^*$.}
\label{pictureboundarybailyborel}
\end{figure}

\begin{lemma}
\label{foldingcompactification}
There exists a compactification $\overline{\mathcal{D}/\Gamma}'$ of $\mathcal{D}/\Gamma$ obtained from $\overline{\mathcal{D}/\Gamma}^*$ by gluing the three $0$-cusps together and by gluing the two $1$-cusps to a rational curve with one node whose smooth points correspond to isomorphism classes of elliptic curves. Moreover, $\overline{\mathcal{D}/\Gamma}^*$ is the normalization of $\overline{\mathcal{D}/\Gamma}'$.
\end{lemma}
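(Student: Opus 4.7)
The plan is to realize $\overline{\mathcal{D}/\Gamma}'$ as a pushout of $\overline{\mathcal{D}/\Gamma}^{BB}$ along a finite equivalence relation on its boundary, using Ferrand's theorem on pinchings of schemes. Set $X := \overline{\mathcal{D}/\Gamma}^{BB}$ and let $W \subset X$ be the reduced finite closed subscheme consisting of the three $0$-cusps together with the pair of points on the two $1$-cusps that must be identified to create the single node. Let $W \to W'$ be the finite surjective morphism that crushes the three $0$-cusps to a single closed point (which will become the node) and which identifies the prescribed pair on the $1$-cusps with that same point. Ferrand's existence theorem then produces the pushout $\overline{\mathcal{D}/\Gamma}' := X \sqcup_W W'$ in the category of noetherian schemes, together with a canonical finite surjective morphism $\nu \colon X \to \overline{\mathcal{D}/\Gamma}'$ which is an isomorphism over $\overline{\mathcal{D}/\Gamma}' \setminus W'$.

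From this construction the three claimed properties follow directly. Since $W$ is disjoint from the open locus $\mathcal{D}/\Gamma \subset X$, the composition $\mathcal{D}/\Gamma \hookrightarrow X \xrightarrow{\nu} \overline{\mathcal{D}/\Gamma}'$ is an open immersion with dense image, so $\overline{\mathcal{D}/\Gamma}'$ is genuinely a compactification of $\mathcal{D}/\Gamma$ (properness is inherited from $X$ via the finite surjection $\nu$). The morphism $\nu$ is finite and birational; because $X$ is normal (being a Baily--Borel compactification) and $\overline{\mathcal{D}/\Gamma}'$ is reduced by construction, the universal property of normalization identifies $\nu$ with the normalization map $(\overline{\mathcal{D}/\Gamma}')^\nu \to \overline{\mathcal{D}/\Gamma}'$. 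Finally, the boundary description is obtained by unwinding the images of the strata of $X$: the three $0$-cusps become a single point, and the two closures of $1$-cusps (each a rational curve, being a degree $3$ cover of $X(1) \cong \mathbb{P}^1$) are glued at exactly one point to form a single rational curve with one node, whose smooth locus inherits from the interpretation of the $1$-cusps as modular curves a bijection with isomorphism classes of elliptic curves.

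The main obstacle is pinpointing the correct set $W$ of points to be glued, in particular which pair of smooth points on the two $1$-cusp closures must be identified so that the result is a nodal rational curve with the prescribed modular interpretation. This requires a careful reading of the incidence structure between the $0$-cusps and the $1$-cusps recorded in \cite[Section~7]{oudompheng}: once the rank-one isotropic sublattices corresponding to the three $0$-cusps and their embedding into the rank-two isotropic sublattices corresponding to the two $1$-cusps are listed, the points that must be identified are dictated by the $\Gamma$-orbits of these embeddings. After that combinatorial input, the remaining verifications are purely formal consequences of the pushout construction.
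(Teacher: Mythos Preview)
Your approach via Ferrand's pinching theorem is exactly what the paper uses, but there is a genuine gap in the specific gluing you perform. You take $W$ to be a \emph{finite} set of points and only identify those. That is not enough: each $1$-cusp $C_{\even}$, $C_{\odd}$ is a degree~$3$ cover of $X(1)\cong\mathbb{P}^1$, so a generic point of a $1$-cusp does \emph{not} correspond to a single isomorphism class of elliptic curves --- three such points lie over each $j$-invariant. If you only glue at finitely many points, the image of the $1$-cusps in your $\overline{\mathcal{D}/\Gamma}'$ is still a $3:1$ (and then $6:1$, once the two components are taken together) cover of the $j$-line, so the claim that ``smooth points correspond to isomorphism classes of elliptic curves'' fails outright.

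The correct construction, and the one the paper carries out, is to apply Ferrand's theorem with the \emph{entire} curves as the closed subschemes: first pinch along the finite morphism $C_{\even}\to X(1)$ to get $X_1=\overline{\mathcal{D}/\Gamma}^{BB}\amalg_{C_{\even}}X(1)$, then along $C_{\odd}\to X(1)$ to get $X_2$, then glue the two resulting copies of $X(1)$ to each other to obtain $X_3$, and only at the very end identify the images of the three $0$-cusps to a single point. Thus the finite morphism in Ferrand's theorem is the degree~$3$ cover, not a map of finite point sets. With this correction in place, your remaining arguments (that the result is a compactification, and that the finite birational map from the normal variety $\overline{\mathcal{D}/\Gamma}^{BB}$ is the normalization by Zariski's Main Theorem) go through exactly as you wrote them.
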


\begin{proof}
We use \cite[Theorem 5.4]{Fer03}, which we briefly recall. Let $X'$ be a scheme, $Y'$ a closed subscheme of $X'$, and $Y'\rightarrow Y$ a finite morphism. Consider the ringed space $X=X'\amalg_{Y'}Y$ and the cocartesian square
\begin{center}
\begin{tikzpicture}[>=angle 90]
\matrix(a)[matrix of math nodes,
row sep=2em, column sep=2em,
text height=1.5ex, text depth=0.25ex]
{Y'&Y\\
X'&X.\\};
\path[->] (a-1-1) edge node[above]{}(a-1-2);
\path[->] (a-1-1) edge node[left]{}(a-2-1);
\path[->] (a-2-1) edge node[below]{}(a-2-2);
\path[->] (a-1-2) edge node[right]{}(a-2-2);
\end{tikzpicture}
\end{center}
Let us assume that any finite sets of points in $X'$ (resp. $Y$) are contained in an open affine subset of $X'$ (resp. $Y$). Then $X$ is a scheme verifying the same property on finite sets of points, the above diagram is cartesian, $Y\rightarrow X$ is a closed immersion, the morphism $X'\rightarrow X$ is finite, and it induces an isomorphism $X'\setminus Y'\cong X\setminus Y$.

Back to our case, denote by $C_{\even}$ and $C_{\odd}$ the two rational $1$-cusps of the Baily--Borel compactification $\overline{\mathcal{D}/\Gamma}^*$. Then by \cite[\S7.3]{Oud10} we have that $C_{\even}$ and $C_{\odd}$ are degree $3$ covers of the modular curve $X(1)\cong\mathbb{P}^1$. So first consider the finite morphism $C_{\even}\rightarrow X(1)$ and let $X_1=\overline{\mathcal{D}/\Gamma}^*\amalg_{C_{\even}}X(1)$ (the hypothesis on finite sets of points is satisfied because $\overline{\mathcal{D}/\Gamma}^*$ and $X(1)$ are projective). Repeat the gluing on $X_1$ by considering $C_{\odd}\rightarrow X(1)$ to obtain $X_2$. Now glue together the two copies of $X(1)$ in $X_2$ to obtain $X_3$. Finally, identify the images in $X_3$ of the three $0$-cusps to obtain the claimed compactification $\overline{\mathcal{D}/\Gamma}'$. The isomorphism between $\overline{\mathcal{D}/\Gamma}^*$ and the normalization of $\overline{\mathcal{D}/\Gamma}'$ follows from Zariski's Main Theorem because $\overline{\mathcal{D}/\Gamma}^*$ is normal and the morphism $\overline{\mathcal{D}/\Gamma}^*\rightarrow\overline{\mathcal{D}/\Gamma}'$ is finite and birational.
\end{proof}

\begin{theorem}
\label{morphismtobailyborelcompactification}
There exists a birational morphism $\overline{\mathbf{M}}\rightarrow\overline{\mathcal{D}/\Gamma}^*$ which maps the boundary of $\overline{\mathbf{M}}$ to the boundary of $\overline{\mathcal{D}/\Gamma}^*$.
\end{theorem}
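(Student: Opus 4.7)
The plan is to apply the valuative extension criterion of Theorem~\ref{lemmaextensionmapmodulispaces} to the rational map $\overline{M}_{D_{1,6}}^\nu\dashrightarrow\overline{\mathcal{D}/\Gamma}^{BB}$ induced by the period map on the smooth locus $\mathcal{U}_{\sm}/\Sym(Q)\subset\overline{M}_{D_{1,6}}^\nu$. As the authors observe, the boundary of $\overline{M}_{D_{1,6}}^\nu$ is not divisorial, so the standard criterion of Borel--Ji does not apply. What must be checked is: for every DVR $(A,\mathfrak{m})$ with fraction field $K$ and every morphism $g\colon\Spec(K)\rightarrow\mathcal{U}_{\sm}/\Sym(Q)$, the limit in $\overline{\mathcal{D}/\Gamma}^{BB}$ of the composition with the period map depends only on $\lim g\in\overline{M}_{D_{1,6}}^\nu$.

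Given $g$, I would pull back the universal K3 cover family $\mathfrak{S}_{\sm}^{\mathrm{K3}}$ of Definition~\ref{familiesofenriquesandk3s} to obtain a family $\mathfrak{X}^*\to\Spec(K)$ of smooth K3 surfaces with relative branch divisor $\mathcal{H}^*$, forming a family of stable pairs. After a finite ramified base change, I complete this in two parallel ways: as a Kulikov model $\mathfrak{X}\to\Spec(A)$ and as the stable-pair model $\mathfrak{X}'\to\Spec(A)$. By Remark~\ref{equivalenttoconsiderthebaseofthecover} the central fiber $\mathfrak{X}_0'$ is precisely the $\mathbb{Z}_2^2$-cover of the stable pair $(B^\bullet,\Delta^\bullet|_{B^\bullet})$ determined by $\lim g$, so $\mathfrak{X}_0'$ depends only on $\lim g$. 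The limit of the period map in $\overline{\mathcal{D}/\Gamma}^{BB}$ is then read off from the Kulikov model $\mathfrak{X}_0$ via the standard dictionary: type I gives a point of $\mathcal{D}/\Gamma$, type II lands on one of the two $1$-cusps at the point encoded by $j(\mathfrak{X}_0)$ under the degree $3$ cover of $X(1)$, and type III lands at one of the three $0$-cusps. By Theorem~\ref{goodbehaviortypeofdegeneration}, the generalized type of $\mathfrak{X}_0'$ and, when it makes sense, its $j$-invariant agree with those of $\mathfrak{X}_0$. Hence the limit depends only on $\lim g$, and Theorem~\ref{lemmaextensionmapmodulispaces} produces the desired morphism.

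One subtlety is that the Kulikov type and $j$-invariant of $\mathfrak{X}_0$ do not by themselves distinguish among the two $1$-cusps, nor among the three $0$-cusps, of $\overline{\mathcal{D}/\Gamma}^{BB}$; what they pin down unambiguously is a single point in the folded compactification $\overline{\mathcal{D}/\Gamma}'$ of Lemma~\ref{foldingcompactification}. I would therefore first apply Theorem~\ref{lemmaextensionmapmodulispaces} to construct a morphism $\overline{M}_{D_{1,6}}^\nu\rightarrow\overline{\mathcal{D}/\Gamma}'$, and then lift it to $\overline{\mathcal{D}/\Gamma}^{BB}\cong(\overline{\mathcal{D}/\Gamma}')^\nu$ by the universal property of normalization, using that $\overline{M}_{D_{1,6}}^\nu$ is normal. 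Birationality is immediate: the composition is an isomorphism on the open set $\mathcal{U}_{\sm}/\Sym(Q)$ onto its image in $\mathcal{D}/\Gamma$. That the boundary of $\overline{M}_{D_{1,6}}^\nu$ maps into the boundary of $\overline{\mathcal{D}/\Gamma}^{BB}$ then follows because $\mathfrak{X}_0'$ has generalized type II or III whenever $B^\bullet$ is reducible.

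The main obstacle is the concrete computation of the Kulikov type (and $j$-invariant in type II) of $\mathfrak{X}_0'$ for each boundary stratum: for each of the degenerate pairs $\left(B^\bullet,\bigl(\frac{1+\epsilon}{2}\bigr)\Delta^\bullet|_{B^\bullet}\right)$ listed in Remark~\ref{explicitdescriptionirreduciblecomponentsdegenerations}, one must read off the dual complex of the corresponding K3 cover from the explicit $\mathbb{Z}_2^2$-cover descriptions in Section~\ref{studyofthecorrespondingz22cover} and verify the resulting type. Theorem~\ref{goodbehaviortypeofdegeneration} reduces this to a purely combinatorial check on the stable model, but the bookkeeping still requires going through each stratum. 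Once this is done, the correspondence between boundary strata of $\overline{M}_{D_{1,6}}^\nu$ and the cusps of $\overline{\mathcal{D}/\Gamma}^{BB}$ drops out.
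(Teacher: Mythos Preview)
Your proposal is essentially the paper's proof: factor through the folded compactification $\overline{\mathcal{D}/\Gamma}'$ of Lemma~\ref{foldingcompactification}, apply the valuative criterion of Theorem~\ref{lemmaextensionmapmodulispaces}, read off the limit from the Kulikov type of the K3 cover using Theorem~\ref{goodbehaviortypeofdegeneration}, and then lift to $\overline{\mathcal{D}/\Gamma}^{BB}$ by normality. Two small points. First, a slip: the central fiber $\mathfrak{X}_0'$ you obtain from $\mathfrak{S}_{\sm}^{\mathrm{K3}}$ is the $\mathbb{Z}_2^3$-cover of $B^\bullet$ (the K3), not the $\mathbb{Z}_2^2$-cover (the Enriques); Remark~\ref{equivalenttoconsiderthebaseofthecover} is stated for $\mathbb{Z}_2^2$ but its argument carries over verbatim to any finite group. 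Second, the paper runs the argument on $\overline{M}_Q^\nu$ rather than on $\overline{M}_{D_{1,6}}^\nu$ directly, because the universal family lives over $\mathcal{U}_{\sm}\subset\overline{M}_Q^\nu$; it then descends by $\Sym(Q)$-equivariance. Your variant works too, but you should say explicitly that a map $\Spec(K)\to\mathcal{U}_{\sm}/\Sym(Q)$ lifts to $\mathcal{U}_{\sm}$ after a finite base change before pulling back the family.
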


\begin{proof}
The GIT interpretation of $\overline{\mathcal{D}/\Gamma}^*$ in \cite{Oud10} as quotient of the Grassmannian $\Gr(3,6)$ gives a birational map $\overline{\mathcal{D}/\Gamma}^*\dashrightarrow\overline{\mathbf{M}}$ (this morphism is defined at points corresponding to arrangements of six lines in $\mathbb{P}^2$ without triple intersection points). Recall the open subset $\mathbf{A}_{\sm}\subseteq\overline{\mathbf{M}}$ parametrizing stable toric pairs $((\mathbb{P}^1)^3,B)$ with $B$ smooth, and consider the composition $\mathbf{A}_{\sm}\rightarrow\overline{\mathbf{M}}\dashrightarrow\overline{\mathcal{D}/\Gamma}^*$, which is regular. We show that this morphism extends to $\overline{\mathbf{M}}_Q$ giving a $\Sym(Q)$-equivariant morphism. This extension is induced by the universal property of the normalization after we extend to $\overline{\mathbf{M}}_Q$ the composition $\rho\colon\mathbf{A}_{\sm}\rightarrow\overline{\mathcal{D}/\Gamma}^*\rightarrow\overline{\mathcal{D}/\Gamma}'$, where $\overline{\mathcal{D}/\Gamma}'$ was constructed in Lemma~\ref{foldingcompactification}. To extend $\rho$ we use Theorem~\ref{lemmaextensionmapmodulispaces}. So let $K$ be the field of fractions of a DVR $(A,\mathfrak{m})$ and consider any $g\colon\Spec(K)\rightarrow\mathbf{A}_{\sm}$. We show that $\lim(\rho\circ g)\in\overline{\mathcal{D}/\Gamma}'$ can be computed using only $\lim g\in\overline{\mathbf{M}}_Q$.

Let $(X,B)$ be the stable toric pair parametrized by $\lim g\in\overline{\mathbf{M}}_Q$ and consider the corresponding stable pair $\left(B^\bullet,\left(\frac{1+\epsilon}{2}\right)\Delta^\bullet|_{B^\bullet}\right)$. We distinguish the following three cases:
\begin{itemize}
\item Case I: $B^\bullet$ is irreducible;
\item Case II: $B^\bullet$ has exactly two irreducible components glued along an irreducible curve;
\item Case III: otherwise.
\end{itemize}

Denote by $p_0$ (resp. $C_0$) the image of the $0$-cusps (resp. $1$-cusps) under the morphism $\overline{\mathcal{D}/\Gamma}^*\rightarrow\overline{\mathcal{D}/\Gamma}'$. The point $\lim(\rho\circ g)$ can be computed as follows. We have a family of $K3$ surfaces $\mathcal{S}^\mathrm{K3}\rightarrow\mathbf{A}_{\sm}$, which is induced by appropriately quotienting and restricting $\mathcal{X}'\rightarrow\mathbf{U}$ in Definition~\ref{familyD16polarizedEnriquessurfaces}. We have that $\mathcal{S}^\mathrm{K3}\subseteq\mathbf{A}_{\sm}\times(\mathbb{P}^1)^3$, and if $\Delta$ is the toric boundary of $(\mathbb{P}^1)^3$, then $(\mathcal{S}^\mathrm{K3},\epsilon(\mathbf{A}_{\sm}\times\Delta)|_{\mathcal{S}^\mathrm{K3}})$ is a family of stable K3 surface pairs. Now let $\mathcal{Y}'$ be the KSBA completion over $\Spec(A)$ (or a finite ramified base change of it) of the restriction of $\mathcal{S}^\textrm{K3}$ to $\Spec(K)$, where we omitted the divisor for simplicity of notation. In particular, $\mathcal{Y}_\mathfrak{m}'$ is the $\mathbb{Z}_2^3$-cover of $B^\bullet$ branched along $\Delta^\bullet|_{B^\bullet}$ and it is the stable model of a degeneration of smooth K3 surface pairs. Let $\mathcal{Y}$ be a Kulikov degeneration obtained from $\mathcal{Y}'$. Then $\mathcal{Y}_\mathfrak{m}$ determines a unique point in $\overline{\mathcal{D}/\Gamma}'$, which depends on the type of $\mathcal{Y}_\mathfrak{m}$ as follows:
\begin{itemize}
\item Type I: $\lim(\rho\circ g)\in\overline{\mathcal{D}/\Gamma}^*$ is the image under the quotient $\mathcal{D}\rightarrow\mathcal{D}/\Gamma$ of the period point corresponding to $\mathcal{Y}_\mathfrak{m}$;
\item Type II: $\lim(\rho\circ g)\in C_0$ and it corresponds to $j(\mathcal{Y}_\mathfrak{m})$;
\item Type III: $\lim(\rho\circ g)=p_0$.
\end{itemize}

We have that $B^\bullet$ falls into case I (resp. II, III) if and only if $\mathcal{Y}_\mathfrak{m}'$ has generalized type I (resp. II, III). (In particular, by Remark~\ref{explicitdescriptionirreduciblecomponentsdegenerations} $(d1),(d1')$, note that if $B^\bullet$ falls into case I, then $\mathcal{Y}_\mathfrak{m}'$ is smooth or it has isolated singularities of type $A_1$ or $A_3$.) The generalized type of $\mathcal{Y}_\mathfrak{m}'$ equals the type of $\mathcal{Y}_\mathfrak{m}$ by Theorem~\ref{goodbehaviortypeofdegeneration}. In addition, if we are in case II, then it makes sense to define the $j$-invariant $j(\mathcal{Y}_\mathfrak{m}')$, and this equals $j(\mathcal{Y}_\mathfrak{m})$ again by Theorem~\ref{goodbehaviortypeofdegeneration}. In conclusion, we proved that $\lim(\rho\circ g)$ only depends on $\lim g$.
\end{proof}

\begin{remark}
\label{correspondenceoflabelsfor0strata}
Consider the three $0$-cusps of $\overline{\mathcal{D}/\Gamma}^*$ (even, odd of type $1$, and odd of type $2$). Going from left to right in Figure~\ref{explicitstratification}, call the $0$-dimensional strata of $\overline{\mathbf{M}}$ even, odd of type $1$, and odd of type $2$. Let us show that a $0$-dimensional boundary stratum of $\overline{\mathbf{M}}$ maps to the $0$-cusp in $\overline{\mathcal{D}/\Gamma}^*$ with the same label. It is enough to show that a given point in the interior of the maximal $1$-dimensional stratum $\mathbf{C}$ of $\overline{\mathbf{M}}$ is mapped to a point in $C_{\odd}$.

For this purpose, consider a smooth one-parameter family with fibers isomorphic to $\Bl_3\mathbb{P}^2$. Equip this family with a divisor with coefficient $\frac{1+\epsilon}{2}$ cutting on each fiber the usual configuration of three pairs of lines without triple points, but in the central fiber two lines in the same pair come together and the other four lines are general. The limit of this one-parameter family in $\overline{\mathbf{M}}$ lies on $\mathbf{C}$. To show this, one has to compute the stable replacement of the central fiber, and this is done by blowing up the double line and then contracting the strict transforms of the two $(-1)$-curves intersecting the double line. On the other hand, the limit of the same family in $\overline{\mathcal{D}/\Gamma}^*$ was computed by Oudompheng, and it belongs to $C_{\odd}$ (see \cite[Figures~1 and 2]{Oud10}). In light of this and of the proof of Theorem~\ref{morphismtobailyborelcompactification}, it is now clear where a given point of the boundary of $\overline{\mathbf{M}}$ is mapped in $\overline{\mathcal{D}/\Gamma}^*$.
\end{remark}

\begin{remark}
The compactification $\overline{\mathbf{M}}$ is isomorphic to the Baily--Borel compactification over an open neighborhood of the odd $0$-cusp of type $2$. To prove this, observe that the morphism $\overline{\mathbf{M}}\rightarrow\overline{\mathcal{D}/\Gamma}^*$ over this neighborhood is birational and finite, hence an isomorphism by Zariski's Main Theorem (recall that $\overline{\mathcal{D}/\Gamma}^*$ is normal).
\end{remark}


\section{Looijenga's semitoric compactifications}
\label{looijengasemitoriccompactifications}

The behavior of the boundary of $\overline{\mathbf{M}}$ suggests that $\overline{\mathbf{M}}$ could be isomorphic to a semitoric compactification $(\overline{\mathcal{D}/\Gamma})_\Sigma$. These compactifications were introduced in \cite{Loo03}, and they depend on a choice of $\Sigma$, which is called an \emph{admissible decomposition of the conical locus of $\mathcal{D}$}. In this section we briefly recall Looijenga's construction and find a $\Sigma$ such that the boundary of $(\overline{\mathcal{D}/\Gamma})_\Sigma$ is combinatorially equivalent to the boundary of $\overline{\mathbf{M}}$. What follows is a sketch: the full calculations are available in \cite[\S8.6]{Sch17}.


\subsection{Admissible decomposition of the conical locus}
\label{admissibledecompositionoftheconicallocusofd}

Following \cite{Loo03}, let $V=L\otimes_\mathbb{Z}\mathbb{C}$, where the lattice $L=\langle2\rangle^{\oplus2}\oplus\langle-1\rangle^{\oplus4}$ was introduced in \S\ref{relationksbabailyborel}. We call \emph{$\mathbb{Q}$-isotropic} an isotropic subspace $W\subseteq V$ which is defined over $\mathbb{Q}$. Denote by $I$ (resp. $J$) a $\mathbb{Q}$-isotropic line (resp. plane) in $V$. Denote by $C_I\subseteq(I^\perp/I)(\mathbb{R})$ (resp. $C_J\subseteq\bigwedge^2J(\mathbb{R})$) one of the two connected components of $\{x\in(I^\perp/I)(\mathbb{R})\mid x\cdot x>0\}$ (resp. $\bigwedge^2J(\mathbb{R})\setminus\{0\}$). There is a canonical choice for $C_I$ and $C_J$ if we specify a connected component of $\mathcal{D}$, as it is explained in \cite[Sections 1.1 and 1.2]{Loo03}. Denote by $C_{I,+}$ (resp. $C_{J,+}$) the convex hull of the $\mathbb{Q}$-vectors in $\overline{C}_I$ (resp. $\overline{C}_J$). Then the \emph{conical locus} of $\mathcal{D}$ is defined as
\begin{equation*}
C(\mathcal{D})=\coprod_{\substack{W\subseteq V,\\W~\mathbb{Q}\textrm{-isotropic}}}C_W.
\end{equation*}
An \emph{admissible decomposition} of $C(\mathcal{D})$ is a $\Gamma$-invariant locally rational decomposition of $C_{I,+}$ for all $\mathbb{Q}$-isotropic lines $I$, satisfying a certain compatibility condition (see \cite[Definition 6.1]{Loo03}). Such an admissible decomposition provides a compactification of $\mathcal{D}/\Gamma$: for example, the Baily--Borel compactification $\overline{\mathcal{D}/\Gamma}^*$ can be thought of as the semitoric compactification associated to the trivial admissible decomposition of $C(\mathcal{D})$ (see \cite[Example 6.2]{Loo03}). We consider the following admissible decomposition of $C(\mathcal{D})$.

\begin{definition}
\label{ourchoiceofadmissibledecomposition}
For any $\mathbb{Q}$-isotropic line $I$, consider the decomposition of $C_{I,+}$ induced by the mirrors of the reflections with respect to the vectors of square $-1$ in the hyperbolic lattice $(I^\perp/I)(\mathbb{Z})$ (these vectors are called $(-1)$-vectors). Let $\Sigma$ be the decomposition of $C(\mathcal{D})$ induced by these mirrors. It can be shown that this provides an admissible decomposition of $C(\mathcal{D})$ (see \cite[Corollary~8.65]{Sch17}).
\end{definition}

To understand the stratification of $(\overline{\mathcal{D}/\Gamma})_\Sigma$ we need to study the decomposition $\Sigma$. There are three choices of $\mathbb{Q}$-isotropic lines $I\subseteq V$ up to isometries of $L$. In each case, the hyperbolic lattice $(I^\perp/I)(\mathbb{Z})$ is computed in \cite[Proposition 7.5]{Oud10}. For $I$ corresponding to the odd $0$-cusp of type $2$, we have that $(I^\perp/I)(\mathbb{Z})$ is an even lattice. Therefore, the decomposition of $C_{I,+}$ induced by $\Sigma$ is given by $C_{I,+}$ itself. If $I$ corresponds to the even $0$-cusp or the odd $0$-cusp of type $1$, then $(I^\perp/I)(\mathbb{Z})$ are odd lattices. In both cases we run Vinberg's algorithm (see \cite[\S1]{Vin75}) to determine a fundamental domain for $\Gamma_I$, where $\Gamma_I$ is the discrete reflection group generated by the reflections with respect to the $(-1)$-vectors in $(I^\perp/I)(\mathbb{Z})$. The details of the computation can be found in \cite[\S8.6.2]{Sch17}. In summary, we obtain the following.

\begin{theorem}{\cite[Theorem 8.67]{Sch17}}
\label{bijectionstrataksbasemitoriccompactification}
Consider the two birational modifications $\overline{\mathbf{M}}\rightarrow\overline{\mathcal{D}/\Gamma}^*\leftarrow(\overline{\mathcal{D}/\Gamma})_\Sigma$ of the Baily--Borel compactification of $\mathcal{D}/\Gamma$. Then these are isomorphic in a neighborhood of the preimage of the odd $1$-cusp of type $2$. Moreover, there is an intersection-preserving bijection between the boundary strata of $\overline{\mathbf{M}}$ and $(\overline{\mathcal{D}/\Gamma})_\Sigma$, which also preserves the dimensions of the strata.
\end{theorem}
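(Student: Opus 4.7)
The plan is to dismantle the statement cusp by cusp using the explicit data from Section~\ref{accuratestudyofthedecomposition} and the boundary stratification in Theorem~\ref{combinatoricsstratificationstatement}. For the local isomorphism assertion, observe that at the odd $0$-cusp of type $2$ the hyperbolic lattice $(I^\perp/I)(\mathbb{Z})\cong H\oplus\mathbb{Z}^2(-2)$ is even and so contains no $(-1)$-vectors; the restriction of $\Sigma$ to $C_{I,+}$ is therefore the trivial decomposition, and by \cite[Lemma 6.6]{looijenga} the morphism $\overline{\mathcal{D}/\Gamma}^\Sigma\to\overline{\mathcal{D}/\Gamma}^{BB}$ is an isomorphism over an analytic neighborhood of the image of $I$. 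Combining this with Observation~\ref{bijectionstrataodd1-cuspoftype2}, which already gives an analytic isomorphism $\overline{M}_{D_{1,6}}^\nu\to\overline{\mathcal{D}/\Gamma}^{BB}$ over a neighborhood of the preimage of the same $0$-cusp, composition with the inverse yields the desired local isomorphism between $\overline{M}_{D_{1,6}}^\nu$ and $\overline{\mathcal{D}/\Gamma}^\Sigma$.

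For the bijection of boundary strata I will appeal to the general description of semitoric boundaries in \cite[Sections 6--7]{looijenga}: the strata of $\overline{\mathcal{D}/\Gamma}^\Sigma$ lying over a $0$-cusp $I$ correspond to $\Gamma_I$-orbits of faces of the decomposition of $C_{I,+}$, and after passing to a Vinberg fundamental chamber these orbits are enumerated by isomorphism classes of elliptic subdiagrams (strata that do not extend to a $1$-cusp) and of maximal parabolic subdiagrams (strata meeting a $1$-cusp) of the Coxeter diagram of $\Gamma_I$. The KSBA side of the bijection at each $0$-cusp is supplied precisely by Observations~\ref{bijectionstrataeven0-cusp}, \ref{bijectionstrataodd1-cuspoftype1}, and \ref{bijectionstrataodd1-cuspoftype2}; composing these two bijections gives a set-theoretic matching between the boundary strata of the two compactifications.

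To upgrade this matching to an intersection- and dimension-preserving bijection, I will verify two compatibilities by inspection. For dimensions, a codimension-$k$ face of $C_{I,+}$ produces a boundary stratum of $\overline{\mathcal{D}/\Gamma}^\Sigma$ of dimension $k-1$, and matching these numbers against the KSBA dimensions recorded in Figure~\ref{explicitstratification} (or equivalently the polyhedral subdivisions displayed in Figure~\ref{subdivisionscubewithoutcornercuts}) is a finite check. For intersections, inclusion of faces of the decomposition translates on the Coxeter side to containment of subdiagrams, and on the KSBA side, by Theorem~\ref{modulistabletoricpairs}(iii), to refinement of the associated polyhedral subdivision of $(Q,Q\cap\mathbb{Z}^3)$; the two Hasse diagrams can then be read off from the figures in Sections~\ref{globalgeography} and \ref{accuratestudyofthedecomposition} and compared face by face.

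The main obstacle, and the only step beyond bookkeeping, will be ensuring consistency for the strata sitting above the two $1$-cusps, each of which is shared between different $0$-cusps. On the semitoric side the admissibility of $\Sigma$ (Corollary~\ref{sigmaisadmissible}), proved via the support-space computations of Remarks~\ref{supportspaceeven0-cusp}, \ref{supportspaceodd0-cuspoftype1}, and \ref{supportspaceodd0-cuspoftype2}, already guarantees that the stratum above a given $1$-cusp is well-defined independently of the $0$-cusp through which we approach it. On the KSBA side the analogous statement requires identifying, across the even and odd-of-type-$1$ $0$-cusps, the maximal parabolic subdiagrams at each cusp that encode the same divisorial (respectively codimension-$2$) boundary stratum of $\overline{M}_{D_{1,6}}^\nu$; this identification is made by tracking which component of the stable pair in Figure~\ref{explicitdegenerationsinvariantwallcrossing} each subdiagram records and cross-referencing with the degenerations of Enriques surfaces described in Section~\ref{studyofthecorrespondingz22cover}.
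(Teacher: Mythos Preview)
Your proposal is correct and follows essentially the same approach as the paper: the paper's proof is a single sentence stating that the theorem summarizes Observations~\ref{bijectionstrataodd1-cuspoftype2}, \ref{bijectionstrataeven0-cusp}, and \ref{bijectionstrataodd1-cuspoftype1}, and your argument is an elaboration of precisely those three observations together with the general Looijenga machinery implicit in them. Your additional discussion of dimension/intersection compatibility and of consistency at the shared $1$-cusps is more detailed than what the paper spells out, but it is in the same spirit and does not constitute a different route.
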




\end{document}